\newtheorem{theorem}{Theorem}[section]
\newtheorem{corollary}[theorem]{Corollary}
\newtheorem{lemma}[theorem]{Lemma}
\newtheorem{definition}[theorem]{Definition}
\newtheorem{proposition}[theorem]{Proposition}
\newtheorem{remark}[theorem]{Remark}
\newtheorem{example}[theorem]{Example}
\numberwithin{theorem}{section}
\begin{document}
\title[ Ternary domains of completely positive maps on Hilbert $C^{\ast }$%
-modules]{On the ternary domain of a completely positive map on a Hilbert $%
C^{\ast }$-module}
\author[M. B. Asadi]{Mohammad B. Asadi}
\address{School of Mathematics, Statistics and Computer Science, College of
Science, University of Tehran, Tehran, Iran, and \\
School of Mathematics, Institute for Research in Fundamental Sciences (IPM),
P.O. Box: 19395-5746, Tehran, Iran}
\email{mb.asadi@khayam.ut.ac.ir}
\author[R. Behmani]{Reza Behmani}
\address{ Department of Mathematics, Kharazmi University, 50, Taleghani
Ave.,15618, Tehran Iran}
\email{reza.behmani@gmail.com}
\author[M. Joi\c{t}a]{Maria Joi\c{t}a}
\address{Department of Mathematics\\
Faculty of Applied Sciences, University Politehnica of Bucharest, 313 Spl.
Independentei, 060042, Bucharest, Romania}
\email{mjoita@fmi.unibuc.ro and maria.joita@mathem.pub.ro}
\urladdr{http://sites.google.com/a/g.unibuc.ro/maria-joita/}
\subjclass[2010]{ Primary 46L08; Secondary 46L07}
\keywords{Hilbert $C^{\ast }$-modules; completely positive liear maps;
multiplicative domains; ternary maps}

\begin{abstract}
We associate to an operator valued completely positive linear map $\varphi \ 
$on a $C^{\ast }$-algebra $A$ and a Hilbert $C^{\ast }$-module $X$ over $A$
a subset $X_{\varphi }$ of $X,$ called `\textit{ternary domain}' of $\varphi 
$ on $X,$ which is a Hilbert $C^{\ast }$-module over the multiplicative
domain of $\varphi $ and every $\varphi $-map (i.e., associated quaternary
map with $\varphi $) acts on it as a ternary map. We also provide several
characterizations for this set. The ternary domain \ of $\varphi $ on $A\ $%
is a closed two-sided $\ast $-ideal $T_{\varphi }$ of the multiplicative
domain of $\varphi $. We show that $XT_{\varphi }=X_{\varphi }$ and give
several characterizations of the set $X_{\varphi }.$ Furthermore, we
establish some relationships between $X_{\varphi }$ and minimal Stinespring
dilation triples associate to $\varphi $. Finally, we show that every
operator valued completely positive linear map $\varphi $ on a $C^{\ast }$%
-algebra $A$ induces a unique (in a some sense) completely positive linear
map on the linking algebra of $X$ and we determine its multiplicative domain
in terms of the multiplicative domain of $\varphi $ and the ternary domain
of $\varphi $ on $X$.
\end{abstract}

\maketitle

\section{Introduction}

Hilbert $C^{\ast }$-modules now play an important role in the wide area of
the theory of $C^{\ast }$-algebras such as $KK$-theory, non-commutative
geometry, Morita equivalence of $C^{\ast }$-algebras and else. Since each $%
C^{\ast }$-algebra is a Hilbert $C^{\ast }$-module, the notion of Hilbert $%
C^{\ast }$-module can be regarded as a generalization of the notion of $%
C^{\ast }$-algebra. In view of all this, the most of well-known notions and
results in the category of $C^{\ast }$-algebras can be investigated in the
category of Hilbert $C^{\ast }$-modules. For instance, some authors studied
the dilation theory for $\varphi $-maps on Hilbert $C^{\ast }$-modules as a
counterpart of the dilation of completely positive linear maps on $C^{\ast }$%
-algebras to $\ast $-representations (see \cite{Asadi, ABN1, ABN2, BRS,
J2011, J2012, SS}).

The multiplicative domain of a completely positive linear map $\varphi $ has
a closed correlation to the dilation of $\varphi $. Indeed, the
multiplicative domain of a completely positive linear map $\varphi $ is the
largest $C^{\ast }$-subalgebra of the domain of $\varphi $ that the
restriction of $\varphi $ to it is a $\ast $-homomorphism.

In this paper, we introduce a notion of \textit{$\varphi $-module domain}
for $\varphi $-maps on Hilbert $C^{\ast }$-modules, and provide several
characterizations of this notion (see Proposition \ref{characterization},
Theorem \ref{st}, Corollaries \ref{cor}, \ref{st10}, \ref{Phich}, \ref{stch}
and Lemma \ref{lem42}). As a motivation, we note that when $X$ is a Hilbert $%
C^{\ast }$-module over a $C^{\ast }$-algebra $A$ and $\Phi :X\rightarrow L(%
\mathcal{H},\mathcal{K})$ is a $\varphi $-representation (i. e., $\varphi
:A\rightarrow L(\mathcal{H})$ is a $\ast $-homomorphism), then $\Phi $ is a $%
\varphi $-module map, i. e., $\Phi (xa)=\Phi (x)\varphi (a)$ for all $x\in X$
and $a\in A$. However, if $\varphi $ is only a completely positive linear
map, then $\Phi $ is not necessarily a $\varphi $-module map.

We show that when $\varphi $ is a completely positive linear map, then for a 
$\varphi $-map $\Phi $ on a Hilbert $C^{\ast }$-module $X$, the $\varphi $%
-module domain of $\Phi $ (which is denoted by $X_{\Phi }$) is a Hilbert $%
C^{\ast }$-module over $M_{\varphi }$ (the multiplicative domain of $\varphi 
$) and the restriction of $\Phi $ to $X_{\Phi }$ is a $\varphi $-module map.
As an interesting result, we prove that $X_{\Phi }$ is independent of $%
\varphi $-map $\Phi $ and only dependents on $\varphi $ and $X$. Hence, we
can denote $X_{\Phi }$ by $X_{\varphi }$ and call it the `\textit{ternary
domain}' of the completely positive linear map $\varphi $ on $X$. Since
every $C^{\ast }$-algebra is a Hilbert $C^{\ast }$-module over itself, we
define the ternary domain of $\varphi $ on $A$ and denote it by $T_{\varphi
} $. We show that $T_{\varphi }$ has a closed relation with the ternary
domain of $\varphi $ on a Hilbert $A$-module $X$ by showing that $%
XT_{\varphi }=X_{\varphi }$ (see Proposition \ref{tttheorem}).

Finally, we show that each $\varphi $-map $\Phi :X\rightarrow L(\mathcal{H},%
\mathcal{K})$ induces a unique ( in a some sense) completely positive linear
map on the linking $C^{\ast }$-algebra of $X,\ \widetilde{\varphi }_{\Phi }:%
\mathcal{L}(X)\rightarrow L(\mathcal{K}\oplus \mathcal{H})$ such that $%
\mathcal{L}_{M_{\varphi }}(X_{\varphi })$, the linking $C^{\ast }$-algebra
of the ternary domain of $\varphi $, can be identify with a $C^{\ast }$%
-subalgebra of $\ M_{\widetilde{\varphi }_{\Phi }}$, the multiplicative
domain of $\ \widetilde{\varphi }_{\Phi }$. Moreover, we show that all the
induced completely positive linear maps on the linking $C^{\ast }$-algebra
of a Hilbert $C^{\ast }$-module $X\ $have the same multiplicative domain and
ternary domain. In other word, for any $\varphi $-map $\Phi :X\rightarrow L(%
\mathcal{H},\mathcal{K^{\prime }})$, the multiplicative domain $M_{%
\widetilde{\varphi }_{\Phi }}$ and the ternary domain $T_{\widetilde{\varphi 
}_{\Phi }}$ are unique objects which are independent of $\Phi $ and only
depend on $\varphi $ and $X$.


\section{Preliminaries}

Let $A$ and $B$ be two $C^{\ast }$-algebras. A linear map $\varphi
:A\rightarrow B$ is \textit{positive} if $\varphi \left( a^{\ast }a\right)
\geq 0\ $ for all $a\in A$. For each positive integer $n,$ $M_{n}(A)$
denotes the $C^{\ast }$-algebra of all $n\times n$ matrices with elements in 
$A$. A positive linear map $\varphi :A\rightarrow B$ is a \textit{completely
positive linear map} if for each positive integer $n$, the linear map $%
\varphi _{n}:M_{n}(A)\rightarrow M_{n}(B),$%
\begin{equation*}
\varphi _{n}\left( \left[ a_{ij}\right] _{i,j=1}^{n}\right) =\left[ \varphi
\left( a_{ij}\right) \right] _{i,j=1}^{n}
\end{equation*}%
is positive.

The \textit{multiplicative domain} of the completely positive linear map $%
\varphi :A\rightarrow B,$ 
\begin{equation*}
M_{\varphi }=\{a\in A:\varphi \left( ab\right) =\varphi \left( a\right)
\varphi \left( b\right) \text{ and }\varphi \left( ba\right) =\varphi \left(
b\right) \varphi \left( a\right) \text{ for all }b\in A\}
\end{equation*}%
is a $C^{\ast }$-subalgebra of $A$. Clearly, if $\varphi $ is unital, then $%
1_{A}$, the unit of $A$, is an element of $M_{\varphi }$. In addition, if $%
\varphi $ is contractive, then 
\begin{equation*}
M_{\varphi }=\{a\in A:\varphi \left( aa^{\ast }\right) =\varphi \left(
a\right) \varphi \left( a\right) ^{\ast }\text{ and }\varphi \left( a^{\ast
}a\right) =\varphi \left( a\right) ^{\ast }\varphi \left( a\right) \}
\end{equation*}%
and so $M_{\varphi }$ is the largest $C^{\ast }$-subalgebra $C$ of $A$ such
that the map $\varphi |_{C}:C\rightarrow B$ (the restriction of $\varphi $
to $C$) is a $C^{\ast }$-morphism (see, \cite[Theorem 3.18]{P} and \cite[%
Theorem 2.1]{G}).

Stinespring \cite{St} showed that a completely positive linear map $\varphi
:A\rightarrow L(\mathcal{H})$, where $L(\mathcal{H})$ denotes the $C^{\ast }$%
-algebra of all bounded linear operators on a Hilbert space $\mathcal{H}$,$\ 
$is of the form $\varphi \left( \cdot \right) =V_{\varphi }^{\ast }\pi
_{\varphi }\left( \cdot \right) V_{\varphi }$, where $\pi _{\varphi }$ is a $%
\ast $-representation of $A$ on a Hilbert space $\mathcal{H}_{\varphi }$ and 
$V_{\varphi }$ is a bounded linear operator from $\mathcal{H}$ to $\mathcal{H%
}_{\varphi }$. The triple $\left( \pi _{\varphi },\mathcal{H}_{\varphi
},V_{\varphi }\right) $ is called a Stinespring representation associated to 
$\varphi $. If $A$ is a unital $C^{\ast }$-algebra and $\varphi $ is unital,
then $V_{\varphi }$ is an isometry. If $\mathcal{H}_{\varphi }=\left[ \pi
_{\varphi }\left( A\right) V_{\varphi }\mathcal{H}\right] $, the Hilbert
space generated by $\{\pi _{\varphi }\left( a\right) V_{\varphi }\xi :a\in
A,\xi \in \mathcal{H}\}$, the triple $\left( \pi _{\varphi },\mathcal{H}
_{\varphi },V_{\varphi }\right) $ is called a minimal Stinespring
representation associated to $\varphi $. If $\left( \pi,\mathcal{K},V\right) 
$ is another minimal Stinespring representation associated to $\varphi $,
then there is a unique unitary operator $U:\mathcal{H}_{\varphi }\rightarrow 
\mathcal{K}$ such that $UV_{\varphi }=V$ and $\pi _{\varphi }(\cdot
)=U^{\ast }\pi (\cdot )U$. Therefore, two minimal Stinespring
representations are unitarily equivalent.

A \textit{Hilbert }$C^{\ast }$\textit{-module} $X$ over a $C^{\ast }$%
-algebra $A$ is by definition a linear space $X$ that is a right $A$-module,
together with an $A$-valued inner product $\left\langle \cdot ,\cdot
\right\rangle $ that is $A$-linear in the second variable and conjugate
linear in the first variable, such that $X$ is a Banach space with the norm
induced by the inner product, $\left\Vert x\right\Vert =\left\Vert
\left\langle x,x\right\rangle \right\Vert ^{\frac{1}{2}}$. We say that $X$
is \textit{full} if $A$ coincides with the closed two-sided ideal generated
by $\{\left\langle x,y\right\rangle : x,y\in X\}.$

We denote the $C^{\ast }$-algebra of all `compact' operators on a Hilbert $%
C^{\ast }$-module $X$ by $K(X)$. We also use $K(X,Y)$ to denote space of all
`compact' operators acting between different Hilbert $C^{\ast }$-modules $X$
and\ $Y$ over $A$. Moreover, $K(X,Y)$ is generated by $\{\theta _{y,x}:x\in
X,y\in Y\}$, where $\theta _{y,x}:X\rightarrow Y,\theta _{y,x}\left(
z\right) =y\left\langle x,z\right\rangle .$

For a Hilbert $C^{\ast }$-module $X$ over a $C^{\ast }$-algebra $A$, the
linking $C^{\ast }$-algebra of $X$ is denoted by $\mathcal{L}(X)$ and
defined as%
\begin{equation*}
\mathcal{L}(X)=\left[ 
\begin{array}{cc}
K(X) & K(A,X) \\ 
K(X,A) & K(A)%
\end{array}%
\right] .
\end{equation*}%
We remark that $\mathcal{L}(X)$ is in fact $C^{\ast }$-algebra of all 
\textit{`compact'} operators on the Hilbert $C^{\ast }$-module $X$ $\oplus A$
over $A$. The map $x\mapsto l_{x},$ where $l_{x}:X\rightarrow
A,l_{x}(z)=\left\langle x,z\right\rangle $, is an isometric conjugate linear
isomorphism from $X$ to $K(X,A)$, and we denote $K(X,A)$ by $X^{\ast }$ and $%
l_{x}$ by $x^{\ast }$. The map $x\mapsto r_{x},$ where $r_{x}:A\rightarrow
X,r_{x}(a)=xa\ (r_{x}^{\ast }=l_{x})$, is an isometric linear isomorphism
from $X$ to $K(A,X)$, and we denote $K(A,X)$ by $X$ and $r_{x}$ by $x$. The
map $a\mapsto T_{a}$, where $T_{a}:A\rightarrow A,$ $T_{a}\left( b\right)
=ab $, is an isometric linear $\ast $-isomorphism from $A$ to $K(A)$, and we
denote $K(A)$ by $A$ and $T_{a}$ by $a$. Using the mentioned isomorphisms,%
\begin{equation*}
\mathcal{L}(X)=\left[ 
\begin{array}{cc}
K(X) & X \\ 
X^{\ast } & A%
\end{array}%
\right] =\left\{ \left[ 
\begin{array}{cc}
T & x \\ 
y^{\ast } & a%
\end{array}%
\right] :T\in K(X),x,y\in X,a\in A\right\} .
\end{equation*}

Let $X$ be a Hilbert $C^{\ast }$-module over $A$ and $\varphi :A\rightarrow
L(\mathcal{H})$ a linear map. A map $\Phi :X\rightarrow L(\mathcal{H},%
\mathcal{K})$ is called $\varphi $\textit{-map} if 
\begin{equation*}
\Phi \left( x\right) ^{\ast }\Phi \left( y\right) =\varphi \left(
\left\langle x,y\right\rangle \right)
\end{equation*}%
for all $x,y\in X$. We say that the $\varphi $-map $\Phi $ is\textit{\
non-degenerate} if $\left[ \Phi \left( X\right) \mathcal{H}\right] =\mathcal{%
K}$ and $\left[ \Phi \left( X\right) ^{\ast }\mathcal{K}\right] =\mathcal{H}$%
. If $X$ is full and $\varphi $ is a $C^{\ast }$-morphism, then $\Phi $ is
non-degenerate if and only if $\left[ \Phi \left( X\right) \mathcal{H}\right]
=\mathcal{K}$. If $\varphi $ is a $C^{\ast }$-morphism we say that $\Phi \ $%
is a $\varphi $-representaion of $X$ on the Hilbert spaces $\mathcal{H}$ and 
$\mathcal{K}$, and if $\varphi $ is a completely positive linear map we say
that $\Phi \ $is a $\varphi $-completely positive linear map.

Let $X$ be a Hilbert $C^{\ast }$-module over the $C^{\ast }$-algebra $A$, $%
\varphi :A\rightarrow L(\mathcal{H})$ a completely positive linear map and $%
\Phi :X\rightarrow L(\mathcal{H},\mathcal{K})$ a $\varphi $-map. Bhat,
Ramesh and Sumesh \cite[Theorem 2.1]{BRS} showed that there is a triple of
pairs $(\left( \Pi _{\Phi },\pi _{\varphi }\right) ,(W_{\Phi },V_{\varphi
}), $ $(\mathcal{H}_{\varphi },\mathcal{K}_{\Phi }))$ consisting of the
Hilbert spaces $\mathcal{H}_{\varphi }$ and $\mathcal{K}_{\Phi }$, a bounded
linear operator $V_{\varphi }:\mathcal{H}\rightarrow \mathcal{H}_{\varphi }$%
, a coisometry $W_{\Phi }:\mathcal{K}\rightarrow \mathcal{K}_{\Phi }$, a $%
\ast $-representation $\pi _{\varphi }:A\rightarrow L(\mathcal{H}_{\varphi
}) $ and a $\pi _{\varphi }$-representation $\Pi _{\Phi }:X\rightarrow L(%
\mathcal{H}_{\varphi },\mathcal{K}_{\Phi })$ such that $\left( \pi _{\varphi
},\mathcal{H}_{\varphi },V_{\varphi }\right) $ is a minimal Stinespring
representation associated to $\varphi $ and $\Phi \left( \cdot \right)
=W_{\Phi }^{\ast }\Pi _{\Phi }\left( \cdot \right) V_{\varphi }$. Moreover, $%
\left[ \Pi _{\Phi }\left( X\right) V_{\varphi }\mathcal{H}\right] =\mathcal{K%
}_{\Phi }$. The triple of pairs $(\left( \Pi _{\Phi },\pi _{\varphi }\right)
,(W_{\Phi },V_{\varphi }),$ $(\mathcal{H}_{\varphi },\mathcal{K}_{\Phi }))$
is called a minimal Stinespring representation associated to the pair $%
(\varphi , \Phi )$.

Let $X$ be a Hilbert $C^{\ast }$-module over a $C^{\ast }$-algebra $A$. The 
\textit{ternary product} on $X$ is the map $[\cdot ,\cdot ,\cdot ]:X\times
X\times X\rightarrow X$ defined by 
\begin{equation*}
\lbrack x,y,z]=x\langle y,z\rangle ,\forall x,y,z\in X.
\end{equation*}%
A map between two Hilbert $C^{\ast }$-modules is called a \textit{ternary map%
}, if it preserves the ternary product. In the case of operator-valued maps,
a map $\Phi :X\rightarrow L(\mathcal{H},\mathcal{K})$ is a ternary map, if 
\begin{equation*}
\Phi (x\langle y,z\rangle )=\Phi (x)\Phi (y)^{\ast }\Phi (z)
\end{equation*}%
for all $x,y,z\in X$. It is known that if $X$ and $Y$ are two Hilbert $%
C^{\ast }$-modules over the $C^{\ast }$-algebras $A$ and $B$, respectively, $%
\varphi :A\rightarrow B$ is a $\ast $-homomorphism and $\Phi :X\rightarrow Y$
is a $\varphi $-map, then $\Phi $ is a linear ternary map. The converse is
also true, that is, the linear ternary maps between full Hilbert $C^{\ast }$%
-modules are exactly those $\varphi $-maps which $\varphi $ is a $\ast $%
-homomorphism between the underlying $C^{\ast }$-algebras of the Hilbert $%
C^{\ast }$-modules.


\section{Ternary domains}

In \cite{SS}, Skeide and Sumesh introduced the notion of quaternary map
between Hilbert $C^{\ast }$-modules as a generalization of the notion of
ternary map and characterized $\varphi $-maps as completely bounded
quaternary maps. In this section, we introduce the notion of a ternary
domain of an operator-valued completely bounded quaternary map and we show
that all the completely bounded quaternary maps which associate with the
same completely positive linear map have the same module domain and acts on
it as a ternary map.


Throughout the paper, $A$ is a $C^{\ast }$-algebra and $X$ a Hilbert $%
C^{\ast }$-module over $A$. Let $\varphi :A\rightarrow L(\mathcal{H})$ be a
completely positive linear map and $\Phi :X\rightarrow L(\mathcal{H},%
\mathcal{K})$ be a $\varphi $-map. We set 
\begin{equation*}
X_{\Phi }=\{x\in X:\Phi \left( xb\right) =\Phi \left( x\right) \varphi
\left( b\right) \text{ for all }b\in A\}.
\end{equation*}%
\ The map $\Phi $ acts on $X_{\Phi }$ as a $\varphi $-module map and we call
it the $\varphi $\textit{-module domain} of $\Phi $. Also, we set 
\begin{equation*}
T_{\Phi }=\{y\in X:\Phi \left( x\left\langle y,z\right\rangle \right) =\Phi
\left( x\right) \Phi \left( y\right) ^{\ast }\Phi \left( z\right) \text{ for
all }x,z\in X\}.
\end{equation*}%
Clearly, $\Phi $ acts on $T_{\Phi }$ as a ternary map and we call it the 
\textit{ternary domain} of $\Phi $. We will show that $T_{\Phi }=X_{\Phi }$.

Also, we define the \textit{ternary domain of $\varphi $} as follows:%
\begin{equation*}
T_{\varphi }=\{a\in M_{\varphi }:\varphi (ba^{\ast }c)=\varphi (b)\varphi
(a)^{\ast }\varphi (c)\text{ for all }b,c\in A\}.
\end{equation*}%
We note that $T_{\varphi }$ is a closed two-sided $\ast $-ideal in $%
M_{\varphi }$ and 
\begin{equation*}
T_{\varphi }=\{a\in A:\varphi (bac)=\varphi (ba)\varphi (c)=\varphi
(b)\varphi (ac)=\varphi (b)\varphi (a)\varphi (c)\text{ for all }b,c\in A\}.
\end{equation*}%
In particular, $T_{\varphi }AT_{\varphi }=T_{\varphi }$ and if $\varphi $ is
unital (or $\varphi (A)$ has a unital element in $\varphi (M_{\varphi }))$,
then 
\begin{equation*}
T_{\varphi }=\{a\in A:\varphi (bac)=\varphi (b)\varphi (a)\varphi (c)\text{
for all }b,c\in A\}.
\end{equation*}


\begin{example}
\label{example0} The map $\Phi :M_{2}(\mathbb{C})\rightarrow L(\mathbb{C}%
^{2},\mathbb{C}^{2})$ defined by 
\begin{equation*}
\Phi \left( \left[ 
\begin{array}{cc}
x_{11} & x_{12} \\ 
x_{21} & x_{22}%
\end{array}%
\right] \right) =\left[ 
\begin{array}{cc}
x_{11} & 0 \\ 
x_{21} & 0%
\end{array}%
\right]
\end{equation*}%
is a $\varphi $-map, where $\varphi :M_{2}(\mathbb{C})\rightarrow M_{2}(%
\mathbb{C})\ $is given by 
\begin{equation*}
\varphi \left( \left[ 
\begin{array}{cc}
a_{11} & a_{12} \\ 
a_{21} & a_{22}%
\end{array}%
\right] \right) =\left[ 
\begin{array}{cc}
a_{11} & 0 \\ 
0 & 0%
\end{array}%
\right] .
\end{equation*}%
It is easy to check that $\varphi $ is a completely positive linear map, $%
M_{\varphi }=\left\{ \left[ 
\begin{array}{cc}
a & 0 \\ 
0 & b%
\end{array}%
\right] :a,b\in \mathbb{C}\right\} $, $X_{\Phi }=\left\{ \left[ 
\begin{array}{cc}
x & 0 \\ 
y & 0%
\end{array}%
\right] :x,y\in \mathbb{C}\right\} =T_{\Phi }$ and $T_{\varphi }=\left\{ %
\left[ 
\begin{array}{cc}
a & 0 \\ 
0 & 0%
\end{array}%
\right] :a\in \mathbb{C}\right\} .$
\end{example}

\begin{example}
Let $A$ be a $C^*$-algebra and $\varphi :A\rightarrow L(\mathcal{H})$ a
completely positive linear map and $\Phi : A\rightarrow L(\mathcal{H},%
\mathcal{K})$ be a $\varphi $-map. Then

\begin{enumerate}
\item $A_{\Phi }\subseteq \{a\in A:\varphi \left( ab\right) =\varphi \left(
a\right) \varphi \left( b\right) \text{ for all }b\in A\}$.

Indeed, let $a\in A_{\Phi }$, $b\in A$ and $(e_{\alpha})_{\alpha \in I}$ be
an approximate unit for $A$. Then 
\begin{eqnarray*}
\varphi \left( ab\right) &=& \lim_{\alpha} \varphi \left( \left\langle
e_{\alpha},ab\right\rangle \right) = \lim_{\alpha} \Phi \left(
e_{\alpha}\right) ^{\ast }\Phi \left( ab\right) = \lim_{\alpha} \Phi \left(
e_{\alpha}\right) ^{\ast }\Phi \left( a\right) \varphi \left( b\right) \\
&=& \lim_{\alpha} \varphi \left( \left\langle e_{\alpha},a\right\rangle
\right) \varphi \left( b\right) =\varphi \left( a\right) \varphi \left(
b\right).
\end{eqnarray*}

\item If $A$ is unital and $\Phi \left( 1_{A}\right) \ $ is onto, then 
\begin{equation*}
A_{\Phi }=\{a\in A:\varphi \left( ab\right) =\varphi \left( a\right) \varphi
\left( b\right) \text{ for all }b\in A\}.
\end{equation*}

Indeed, if $\Phi \left( 1_{A}\right) \ $is onto, then its adjoint has a left
inverse. Thus for every $a\in A$ with the property that $\varphi \left(
ab\right) =\varphi \left( a\right) \varphi \left( b\right) $ for all $b\in A$%
, we have 
\begin{eqnarray*}
\Phi \left( 1_{A}\right) ^{\ast }\left( \Phi \left( ab\right) -\Phi \left(
a\right) \varphi \left( b\right) \right) &=&\varphi \left( \left\langle
1_{A},ab\right\rangle \right) -\varphi \left( \left\langle
1_{A},a\right\rangle \right) \varphi \left( b\right) \\
&=&\varphi \left( ab\right) -\varphi \left( a\right) \varphi \left( b\right)
=0\text{ for all }b\in A.
\end{eqnarray*}%
From this relation and taking into account that $\Phi \left( 1_{A}\right)
^{\ast }$ has a left inverse, we conclude that $\Phi \left( ab\right) =\Phi
\left( a\right) \varphi \left( b\right) $ for all $b\in A$, and so $a\in $ $%
A_{\Phi }.$

\item If $A$ is unital and $\Phi \left( 1_{A}\right) $ is a coisometry then $%
\varphi $ is a $\ast $-homomorphism and $A_{\Phi }=A$.

Indeed, for every $a\in A$, we have%
\begin{equation*}
\varphi (a)=\varphi \left( \left\langle 1_{A},a\right\rangle \right) =\Phi
(1_{A})^{\ast }\Phi (a),
\end{equation*}%
and then 
\begin{eqnarray*}
\varphi (a)\varphi (b) &=&(\varphi (a^{\ast }))^{\ast }\varphi (b)=\Phi
(a^{\ast })^{\ast }\Phi (1_{A})\Phi (1_{A})^{\ast }\Phi (b) \\
&=&\Phi (a^{\ast })^{\ast }\Phi (b)=\varphi (\left\langle a^{\ast
},b\right\rangle )=\varphi (a b)
\end{eqnarray*}%
for all $a,b\in A$.
\end{enumerate}
\end{example}

In the following theorem, we provide some properties of the $\varphi $%
-module domain of a $\varphi $-map on a Hilbert $C^{\ast }$-module. 

\begin{theorem}
\label{main theorem} Let $X$ be a Hilbert $A$-module, $\varphi :A\rightarrow
L(\mathcal{H})$ a completely positive linear map and $\Phi :X\rightarrow L(%
\mathcal{H},\mathcal{K})$ a $\varphi $-map. Then:

\begin{enumerate}
\item $X_{\Phi }$ is a Hilbert $C^{\ast }$-module over $M_{\varphi }$;

\item $\left. \Phi \right\vert _{X_{\Phi }}:X_{\Phi }\rightarrow L(\mathcal{H%
},\mathcal{K})\ $is a $\left. \varphi \right\vert _{M_{\varphi }}$%
-representation;

\item $\left. \Phi \right\vert _{X_{\Phi }}:X_{\Phi }\rightarrow L(\mathcal{H%
},\mathcal{K})\ $ is a ternary map.
\end{enumerate}
\end{theorem}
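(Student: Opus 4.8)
The plan is to establish the three claims in sequence, concentrating the real work in (1); claims (2) and (3) will then follow almost formally. First I would record that every $\varphi$-map is bounded, since $\|\Phi(x)\|^{2}=\|\Phi(x)^{\ast}\Phi(x)\|=\|\varphi(\langle x,x\rangle)\|\leq\|\varphi\|\,\|x\|^{2}$. This continuity lets me show immediately that $X_{\Phi}$ is closed: if $x_{n}\to x$ with $x_{n}\in X_{\Phi}$, then the defining identity $\Phi(x_{n}b)=\Phi(x_{n})\varphi(b)$ passes to the limit, giving $\Phi(xb)=\Phi(x)\varphi(b)$. That $X_{\Phi}$ is a linear subspace is immediate from linearity of $\Phi$ and $\varphi$.

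The heart of (1) is to prove that $\langle x,y\rangle\in M_{\varphi}$ whenever $x,y\in X_{\Phi}$, and this is where I expect the main obstacle. The first multiplicative relation is easy: for $b\in A$, using $y\in X_{\Phi}$, $\varphi(\langle x,y\rangle b)=\varphi(\langle x,yb\rangle)=\Phi(x)^{\ast}\Phi(yb)=\Phi(x)^{\ast}\Phi(y)\varphi(b)=\varphi(\langle x,y\rangle)\varphi(b)$. The opposite relation $\varphi(b\langle x,y\rangle)=\varphi(b)\varphi(\langle x,y\rangle)$ is the delicate point; the trick I would use is to shift $b$ into the first slot and then use $x\in X_{\Phi}$: $\varphi(b\langle x,y\rangle)=\varphi(\langle xb^{\ast},y\rangle)=\Phi(xb^{\ast})^{\ast}\Phi(y)=(\Phi(x)\varphi(b^{\ast}))^{\ast}\Phi(y)=\varphi(b^{\ast})^{\ast}\varphi(\langle x,y\rangle)$. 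To close the argument I would invoke that a completely positive (hence positive, hence Hermitian) map is $\ast$-preserving, so $\varphi(b^{\ast})^{\ast}=\varphi(b)$, yielding the desired identity. Together these place $\langle x,y\rangle$ in $M_{\varphi}$.

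With the inner product now landing in $M_{\varphi}$, it remains to check that $X_{\Phi}$ is a right $M_{\varphi}$-module. For $x\in X_{\Phi}$ and $m\in M_{\varphi}$ a short computation gives $\Phi((xm)b)=\Phi(x(mb))=\Phi(x)\varphi(mb)=\Phi(x)\varphi(m)\varphi(b)=\Phi(xm)\varphi(b)$, the third equality using $m\in M_{\varphi}$, so $xm\in X_{\Phi}$. (I would remark that $X_{\Phi}$ is generally \emph{not} a right $A$-module, since closure under all of $A$ would force $\varphi(ab)=\varphi(a)\varphi(b)$ for every $a$.) Restricting the $A$-valued inner product therefore yields an $M_{\varphi}$-valued inner product on the closed—hence complete—subspace $X_{\Phi}$, which proves (1).

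For (2), since $\varphi|_{M_{\varphi}}$ is a $C^{\ast}$-morphism by the defining property of the multiplicative domain, and $\Phi(x)^{\ast}\Phi(y)=\varphi(\langle x,y\rangle)=\varphi|_{M_{\varphi}}(\langle x,y\rangle)$ for $x,y\in X_{\Phi}$ (the inner product now lying in $M_{\varphi}$), the restriction $\Phi|_{X_{\Phi}}$ is by definition a $\varphi|_{M_{\varphi}}$-representation. Finally, (3) follows directly: for $x,y,z\in X_{\Phi}$ one has $\langle y,z\rangle\in M_{\varphi}\subseteq A$, whence $\Phi(x\langle y,z\rangle)=\Phi(x)\varphi(\langle y,z\rangle)=\Phi(x)\Phi(y)^{\ast}\Phi(z)$, using $x\in X_{\Phi}$ and then the $\varphi$-map property; alternatively one may simply quote the general fact recorded in the preliminaries that a $\varphi$-map is a ternary map whenever $\varphi$ is a $\ast$-homomorphism, applied to $\varphi|_{M_{\varphi}}$.
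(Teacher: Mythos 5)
Your proposal is correct and follows essentially the same route as the paper's proof: the same computations showing $\langle x,y\rangle\in M_{\varphi}$ (including the $xb^{\ast}$ trick and $\ast$-preservation of $\varphi$), the same verification that $X_{\Phi}$ is a closed right $M_{\varphi}$-module, and the same deduction of (2) and (3) from the Hilbert $M_{\varphi}$-module structure. The only additions are harmless explicit justifications (boundedness of $\Phi$, the remark that $X_{\Phi}$ need not be an $A$-module) that the paper leaves implicit.
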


\begin{proof}
$(1)$ Let $x,y\in X_{\Phi },$ $a\in M_{\varphi }$. Then 
\begin{equation*}
\Phi \left( \left( x+y\right) b\right) =\Phi \left( xb\right) +\Phi \left(
yb\right) =\Phi \left( x\right) \varphi \left( b\right) +\Phi \left(
y\right) \varphi \left( b\right) =\Phi \left( x+y\right) \varphi \left(
b\right)
\end{equation*}%
for all $b\in A$,$\ $and%
\begin{equation*}
\Phi \left( \left( xa\right) b\right) =\Phi \left( x\left( ab\right) \right)
=\Phi \left( x\right) \varphi \left( ab\right) =\Phi \left( x\right) \varphi
\left( a\right) \varphi \left( b\right) =\Phi \left( xa\right) \varphi
\left( b\right)
\end{equation*}%
for all $b\in A$. Thus, we showed that $X_{\Phi }$ is a right module over
the $C^{\ast }$-algebra $M_{\varphi }$. From 
\begin{equation*}
\varphi \left( \left\langle x,y\right\rangle b\right) =\varphi \left(
\left\langle x,yb\right\rangle \right) =\Phi \left( x\right) ^{\ast }\Phi
\left( y\right) \varphi \left( b\right) =\varphi \left( \left\langle
x,y\right\rangle \right) \varphi \left( b\right)
\end{equation*}%
and 
\begin{equation*}
\varphi \left( b\left\langle x,y\right\rangle \right) =\varphi \left(
\left\langle xb^{\ast },y\right\rangle \right) =\varphi \left( b^{\ast
}\right) ^{\ast }\Phi \left( x\right) ^{\ast }\Phi \left( y\right) =\varphi
\left( b\right) \varphi \left( \left\langle x,y\right\rangle \right)
\end{equation*}%
for all $b\in A,$ we deduce that $\left\langle x,y\right\rangle \in
M_{\varphi },$ and so $X_{\Phi }$ is a pre-Hilbert module over the $C^{\ast
} $-algebra $M_{\varphi }.$

Let $\{x_{i}\}_{i\in I}$ be a net of elements in $X_{\Phi }$ which converges
to $x\in X$\ and $b\in A$. Then, since the net $\{x_{i}b\}_{i\in I}$
converges to $xb$ and $\Phi $ is continuous, 
\begin{equation*}
\Phi \left( xb\right) =\lim\limits_{i}\Phi \left( x_{i}b\right)
=\lim\limits_{i}\Phi \left( x_{i}\right) \varphi \left( b\right) =\Phi
\left( x\right) \varphi \left( b\right)
\end{equation*}%
and so $X_{\Phi }$ is a Hilbert $C^{\ast }$-module over $M_{\varphi }.$

$(2)$ Since $X_{\Phi }$ is a Hilbert $C^{\ast }$-module over $M_{\varphi }$
and $\left. \varphi \right\vert _{M_{\varphi }}$is a $\ast $-representation,$%
\left. \Phi \right\vert _{X_{\Phi }}:X_{\Phi }\rightarrow L(\mathcal{H},%
\mathcal{K})\ $is a $\left. \varphi \right\vert _{M_{\varphi }}$%
-representation.

$(3)$ By part $(2)$, $\left. \Phi \right\vert _{X_{\Phi }}:X_{\Phi
}\rightarrow L(\mathcal{H},\mathcal{K})\ $is a $\left. \varphi \right\vert
_{M_{\varphi }}$-representation and so it is a ternary map.
\end{proof}


\begin{lemma}
\label{idmdlem} Let $I$ be a closed two-sided $\ast $-ideal of $A$ and $%
\varphi :A\rightarrow L(\mathcal{H})$ a positive linear map. Then $M_{\left.
\varphi \right\vert _{I}}\subseteq M_{\varphi }$, where $\left. \varphi
\right\vert _{I}$ is the restriction of $\varphi $ to $I$.
\end{lemma}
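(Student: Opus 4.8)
The plan is to avoid the defining (product) form of the multiplicative domain and instead argue through the Schwarz-type characterization recorded in the preliminaries: for a contractive map $\psi$ one has
$M_{\psi}=\{a:\psi(a^{\ast}a)=\psi(a)^{\ast}\psi(a)\text{ and }\psi(aa^{\ast})=\psi(a)\psi(a)^{\ast}\}$.
The decisive feature of this description is that it only constrains $\psi$ on the single elements $a^{\ast}a$ and $aa^{\ast}$, and these behave well under passage to an ideal. This is what lets one compare $M_{\varphi|_{I}}$ with $M_{\varphi}$, whereas the product definition would force us to evaluate $\varphi$ on products $a b$ with $b$ ranging over all of $A$.

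First I would check that the restriction $\varphi|_{I}:I\rightarrow L(\mathcal{H})$ inherits exactly the hypotheses under which the characterization is available. Restricting a positive (resp. completely positive) map to the $C^{\ast}$-subalgebra $I$ again yields a positive (resp. completely positive) map, and since $\left\Vert \varphi|_{I}\right\Vert \leq \left\Vert \varphi\right\Vert $ contractivity is preserved as well. Consequently the characterization applies verbatim to $\varphi|_{I}$, so that membership $a\in M_{\varphi|_{I}}$ is equivalent to $a\in I$ together with $\varphi|_{I}(a^{\ast}a)=\varphi|_{I}(a)^{\ast}\varphi|_{I}(a)$ and $\varphi|_{I}(aa^{\ast})=\varphi|_{I}(a)\varphi|_{I}(a)^{\ast}$.

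The transfer step is then immediate. If $a\in I$ then $a^{\ast}a,aa^{\ast}\in I$ because $I$ is a two-sided $\ast$-ideal, so $\varphi$ and $\varphi|_{I}$ agree on each of $a$, $a^{\ast}a$ and $aa^{\ast}$. Hence for $a\in M_{\varphi|_{I}}$ the two equalities above read $\varphi(a^{\ast}a)=\varphi(a)^{\ast}\varphi(a)$ and $\varphi(aa^{\ast})=\varphi(a)\varphi(a)^{\ast}$, now formulated entirely in terms of $\varphi$ on $A$. Applying the characterization to $\varphi$ itself, these are precisely the conditions for $a\in M_{\varphi}$, giving the desired inclusion $M_{\varphi|_{I}}\subseteq M_{\varphi}$.

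The step I expect to carry the real content is the invocation of the Schwarz characterization: this is where (complete) positivity and contractivity are genuinely used, and the care lies in verifying that these structural hypotheses descend to $\varphi|_{I}$ as above. Everything else is formal, resting only on the stability $a^{\ast}a,aa^{\ast}\in I$. It is worth noting why a naive argument from the product definition stalls: one would need to factor $\varphi(ab)$ and $\varphi(ba)$ for arbitrary $b\in A\setminus I$, and the ideal-multiplicativity of $a$ (which controls $\varphi$ only against multipliers drawn from $I$) does not reach such $b$; passing to the single-element Schwarz equalities is exactly what bridges from $I$ to all of $A$.
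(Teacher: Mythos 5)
There is a genuine gap, and it lies exactly in the step you flag as carrying the real content. The implication you need at the end --- that the two Schwarz equalities $\varphi(a^{\ast}a)=\varphi(a)^{\ast}\varphi(a)$ and $\varphi(aa^{\ast})=\varphi(a)\varphi(a)^{\ast}$ force $a\in M_{\varphi}$ --- is only valid when $\varphi$ is a \emph{contractive} completely positive (or at least $2$-positive) map; that is precisely how the paper states the Schwarz-form description of $M_{\varphi}$ in the preliminaries. Lemma \ref{idmdlem} assumes only that $\varphi$ is positive, with no contractivity, and this is not an idle generality: the lemma is invoked in the proof of Proposition \ref{characterization} (2) and (3), where $\varphi$ is completely positive but not assumed contractive (contractivity is added only in part (4) of that proposition, and Example \ref{counterexample} is there exactly to show that such equivalences break without it). Concretely, for $\varphi:M_{2}(\mathbb{C})\rightarrow M_{2}(\mathbb{C})$ sending $[a_{ij}]$ to $\mathrm{diag}(2a_{11},0)$, any $a$ with $\left\vert a_{12}\right\vert =\left\vert a_{21}\right\vert =\left\vert a_{11}\right\vert \neq 0$ satisfies both Schwarz equalities, yet $\varphi(ab)=\varphi(a)\varphi(b)$ fails for suitable $b$, so $a\notin M_{\varphi}$. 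Hence the final appeal to ``the characterization applied to $\varphi$ itself'' is not available under the lemma's hypotheses, and the argument does not close. (The forward half of your argument --- that $a\in M_{\varphi|_{I}}$ gives the Schwarz equalities, and that $a^{\ast}a,aa^{\ast}\in I$ so these are statements about $\varphi$ --- is fine and needs no extra hypotheses.)

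The paper instead works directly with the product definition and bridges from $I$ to $A$ with an approximate unit $(u_{\lambda})$ of $I$: for $a\in I$ and $b\in A$ one has $au_{\lambda}bu_{\lambda}\rightarrow ab$ in norm, the multiplicativity of $\varphi|_{I}$ applies to the elements $u_{\lambda}bu_{\lambda}\in I$, and the hereditarity of $I$ gives $\tau(\varphi(u_{\lambda}bu_{\lambda}))\rightarrow \tau(\varphi(b))$ for positive functionals $\tau$, which lets one pass to the limit in $\varphi(au_{\lambda}bu_{\lambda})=\varphi(a)\varphi(u_{\lambda}bu_{\lambda})$. That route uses only positivity of $\varphi$ and no norm bound. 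If you want to keep your strategy, you would have to either add contractivity (and complete positivity) to the hypotheses --- which would weaken the lemma below what Proposition \ref{characterization} needs --- or supply a separate argument for the backward implication that does not rest on the Schwarz inequality.
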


\begin{proof}
Let $b\in A,$ $a\in I$ and $(u_{\lambda })_{\lambda \in \Lambda }$ be an
approximate unit for $I$. From 
\begin{eqnarray*}
\left\Vert au_{\lambda }bu_{\lambda }-ab\right\Vert &\leq &\left\Vert \left(
au_{\lambda }-a\right) bu_{\lambda }\right\Vert +\left\Vert abu_{\lambda
}-ab\right\Vert \\
&\leq &\left\Vert au_{\lambda }-a\right\Vert \left\Vert b\right\Vert
+\left\Vert abu_{\lambda }-ab\right\Vert
\end{eqnarray*}%
for all $\lambda \in \Lambda \ $and taking into account that $(u_{\lambda
})_{\lambda \in \Lambda }$ is an approximate unit for $I$, we deduce that%
\begin{equation*}
\lim_{\lambda }\Vert au_{\lambda }bu_{\lambda }-ab\Vert =0.
\end{equation*}

On the other hand, $I$ is a hereditary $C^{\ast }$-subalgebras of $A$, and
then, for each $b\in A,$ we have 
\begin{equation*}
\tau \circ \varphi (b)=\lim_{\lambda }\tau \circ \varphi (u_{\lambda
}bu_{\lambda })
\end{equation*}%
for all positive (and so all) linear functional $\tau $ on $L(\mathcal{H})\ $%
\ (\cite[Theorem 3.3.9]{M}). In particular, we have 
\begin{equation*}
\langle \varphi (b)\xi ,\eta \rangle =\lim_{\lambda }\langle \varphi
(u_{\lambda }bu_{\lambda })\xi ,\eta \rangle
\end{equation*}%
for all $\xi ,\eta \in \mathcal{H}.$ Now, let $a\in M_{\left. \varphi
\right\vert _{I}}\ $and $b\in A$. Then 
\begin{eqnarray*}
\langle \varphi (ab)\xi ,\eta \rangle &=&\lim_{\lambda }\langle \varphi
(au_{\lambda }bu_{\lambda })\xi ,\eta \rangle =\lim_{\lambda }\langle
\varphi (a)\varphi (u_{\lambda }bu_{\lambda })\xi ,\eta \rangle \\
&=&\lim_{\lambda }\langle \varphi (u_{\lambda }bu_{\lambda })\xi ,\varphi
(a)^{\ast }\eta \rangle =\langle \varphi (b)\xi ,\varphi (a)^{\ast }\eta
\rangle =\langle \varphi (a)\varphi (b)\xi ,\eta \rangle
\end{eqnarray*}%
for all $\xi ,\eta \in \mathcal{H},$ and so $\varphi (ab)=\varphi (a)\varphi
(b)$. Similarly, we can show that $\varphi (ba)=\varphi (b)\varphi (a)$.
Therefore, $a\in M_{\varphi }$.
\end{proof}


In the next proposition, a characterization of $\varphi $-module domain of $%
\Phi $ is provided in terms of the ternary domain of $\varphi \ $and the
ternary domain of $\Phi $.


\begin{proposition}
\label{characterization} Let $X$ be a Hilbert $A$-module, $\varphi
:A\rightarrow L(\mathcal{H})$ a completely positive linear map, $\Phi
:X\rightarrow L(\mathcal{H},\mathcal{K})$ a $\varphi $-map and $x_{0}\in X$.
Then

\begin{enumerate}
\item $x_{0}\in X_{\Phi }$ if and only if $\left\langle x_{0} ,
x_{0}\right\rangle \in T_\varphi$;

\item $x_{0}\in X_{\Phi }$ if and only if $\Phi \left( x_{0}\left\langle y ,
z \right\rangle \right) =\Phi \left( x_{0}\right) \varphi \left(
\left\langle y , z\right\rangle \right) $, for all $y, z \in X$;

\item $X_{\Phi }= T_{\Phi }$.\newline
Moreover, if $\varphi$ is contractive, then

\item $\Phi \left( x_{0}\left\langle x_{0} , x_{0}\right\rangle \right)
=\Phi \left( x_{0}\right) \varphi \left( \left\langle x_{0} ,
x_{0}\right\rangle \right) $ if and only if $\left\langle x_{0} ,
x_{0}\right\rangle \in M_\varphi$.
\end{enumerate}
\end{proposition}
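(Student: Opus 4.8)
The unifying idea is to reduce the nontrivial implications, via the defining identity $\Phi(x)^{\ast }\Phi(y)=\varphi(\langle x,y\rangle)$ and the fact that the positive map $\varphi$ preserves adjoints, to the single algebraic condition $w:=\langle x_{0},x_{0}\rangle\in T_{\varphi}$; the pivot is part (1), which identifies $X_{\Phi}$ with $\{x\in X:\langle x,x\rangle\in T_{\varphi}\}$. Throughout write $I$ for the closed two-sided $\ast $-ideal of $A$ generated by $\{\langle x,y\rangle:x,y\in X\}$.

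First I would prove (1). For the forward direction, if $x_{0}\in X_{\Phi}$ then for all $b,c\in A$
\[
\varphi(bwc)=\varphi(\langle x_{0}b^{\ast },x_{0}c\rangle)=\Phi(x_{0}b^{\ast })^{\ast }\Phi(x_{0}c)=(\Phi(x_{0})\varphi(b^{\ast }))^{\ast }\Phi(x_{0})\varphi(c)=\varphi(b)\varphi(w)\varphi(c),
\]
which together with $w\in M_{\varphi}$ (Theorem \ref{main theorem}(1)) gives $w\in T_{\varphi}$. Conversely, assuming $w\in T_{\varphi}$, I would expand $(\Phi(x_{0}b)-\Phi(x_{0})\varphi(b))^{\ast }(\Phi(x_{0}b)-\Phi(x_{0})\varphi(b))$ into four terms through the $\varphi$-map identity; the relation $w\in M_{\varphi}$ kills the two cross terms and the $T_{\varphi}$-identity $\varphi(b^{\ast }wb)=\varphi(b)^{\ast }\varphi(w)\varphi(b)$ cancels the rest, so this positive operator is $0$ and $x_{0}\in X_{\Phi}$. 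Part (4) is equally direct: pairing the hypothesis $\Phi(x_{0}w)=\Phi(x_{0})\varphi(w)$ with $\Phi(x_{0})$ yields $\varphi(w^{2})=\Phi(x_{0}w)^{\ast }\Phi(x_{0})=\varphi(w)^{2}$, whence $w\in M_{\varphi}$ because $w$ is self-adjoint and, $\varphi$ being contractive, the contractive characterization of $M_{\varphi}$ recalled in Section 2 applies; the reverse implication follows by the same four-term expansion, now using $\varphi(w^{n})=\varphi(w)^{n}$.

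The substance lies in (2) and (3). The implications $x_{0}\in X_{\Phi}\Rightarrow$ (2) and $X_{\Phi}\subseteq T_{\Phi}$ are immediate: the first is the choice $b=\langle y,z\rangle$, and for the second one checks from $y_{0}\in X_{\Phi}$ that $\varphi(b\langle y_{0},t\rangle)=\varphi(b)\varphi(\langle y_{0},t\rangle)$ for every $b$, so that $\Phi(r)^{\ast }[\Phi(s\langle y_{0},t\rangle)-\Phi(s)\varphi(\langle y_{0},t\rangle)]=0$ for all $r\in X$; since the range of this operator also lies in $[\Phi(X)\mathcal{H}]$, it must vanish, giving $y_{0}\in T_{\Phi}$. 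For the two nontrivial inclusions (the converse of (2) and $T_{\Phi}\subseteq X_{\Phi}$) I would again aim at $w\in T_{\varphi}$ and invoke (1). Pairing the hypotheses with $\Phi$ on the left and on the right produces $\varphi(wd)=\varphi(w)\varphi(d)$ and $\varphi(dw)=\varphi(d)\varphi(w)$ for all $d\in I$, so $w\in M_{\varphi|_{I}}$ and hence $w\in M_{\varphi}$ by Lemma \ref{idmdlem}, as well as the ternary identity $\varphi(pwq)=\varphi(p)\varphi(w)\varphi(q)$ for all $p,q\in I$.

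The main obstacle I anticipate is promoting this last identity from $I$ to all of $A$, that is, proving $\varphi(bwc)=\varphi(b)\varphi(w)\varphi(c)$ for arbitrary $b,c\in A$, which is exactly membership in $T_{\varphi}$. My plan is an approximate-unit argument based on $w\in I\cap M_{\varphi}$: choosing an approximate unit $(u_{\lambda})$ for $I$, I would first use $bw=\lim_{\lambda}bu_{\lambda}w$ with $bu_{\lambda}\in I$ to obtain $\varphi(bwq)=\varphi(b)\varphi(w)\varphi(q)$ for $b\in A$, $q\in I$, and then write $wc=\lim_{\mu}wu_{\mu}c$ and recombine through $\varphi(w)\varphi(u_{\mu}c)=\varphi(wu_{\mu}c)\to\varphi(wc)$ to reach arbitrary $c$. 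Once $w\in T_{\varphi}$ is in hand, part (1) closes both remaining inclusions.
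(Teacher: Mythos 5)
Your proposal is correct and, in its overall architecture --- part (1) as the pivot, Lemma \ref{idmdlem} applied to the ideal $I$ to get $w:=\langle x_{0},x_{0}\rangle \in M_{\varphi }$, and the four-term expansions for the easy directions --- it matches the paper. Where you genuinely diverge is at the crux of (2) and (3): promoting the ternary identity for $w$ from $I$ to all of $A$. The paper avoids any limit argument by a squaring trick: for arbitrary $b,c\in A$ one has $cw^{2}b=\langle x_{0}c^{\ast },x_{0}\langle x_{0},x_{0}b\rangle \rangle $, which is itself an inner product of module elements, so the hypothesis applies globally and gives $\varphi (cw^{2}b)=\varphi (c)\varphi (w^{2})\varphi (b)$ outright; hence $w^{2}\in T_{\varphi }$, and $w\in T_{\varphi }$ follows because $T_{\varphi }$ is a closed two-sided ideal of $M_{\varphi }$ containing the square of the positive element $w$. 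Your approximate-unit extension is a correct substitute: the limits recombine precisely because $w\in I$ (so $u_{\lambda }w\rightarrow w$) and $w\in M_{\varphi }$ (so $\varphi (bu_{\lambda })\varphi (w)=\varphi (bu_{\lambda }w)\rightarrow \varphi (b)\varphi (w)$), exactly as you indicate. The paper's route buys brevity and no limits, at the cost of the (easy) observation that $w^{2}\in T_{\varphi }$ forces $w\in T_{\varphi }$; yours requires first establishing $\varphi (pwq)=\varphi (p)\varphi (w)\varphi (q)$ for $p,q\in I$, and you should make explicit that for $T_{\Phi }\subseteq X_{\Phi }$ this is not a symmetric ``pairing on both sides'' as in the converse of (2): one writes $pwq=\langle u,v\langle x_{0},x_{0}\langle y,z\rangle \rangle \rangle $, applies the $T_{\Phi }$ hypothesis once, and then needs $w\in M_{\left. \varphi \right\vert _{I}}$ from your first step to finish. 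Your orthogonality argument for $X_{\Phi }\subseteq T_{\Phi }$ (the defect operator has range inside $[\Phi (X)\mathcal{H}]$ and is annihilated by every $\Phi (r)^{\ast }$) is likewise a clean alternative to the paper's direct expansion of the square of the defect.
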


\begin{proof}
$(1)$ First we suppose that $x_{0}\in X_{\Phi }$. Since $X_\Phi$ is a
Hilbert $C^*$-module over $M_\varphi$ (by Theorem \ref{main theorem}), $%
\left\langle x_{0},x_{0}\right\rangle \in M_{\varphi }$. Also, 
\begin{eqnarray*}
\varphi \left( b\left\langle x_{0},x_{0}\right\rangle c\right) &=&\varphi
\left( \left\langle x_{0}b^{\ast },x_{0}c\right\rangle \right) =\Phi \left(
x_{0}b^{\ast }\right) ^{\ast }\Phi \left( x_{0}c\right) \\
&=&\varphi \left( b\right) \Phi \left( x_{0}\right) ^{\ast }\Phi \left(
x_{0}\right) \varphi \left( c\right) =\varphi \left( b\right) \varphi \left(
\left\langle x_{0},x_{0}\right\rangle \right) \varphi \left( c\right)
\end{eqnarray*}%
for all $b,c\in A$. Hence, $\left\langle x_{0},x_{0}\right\rangle \in
T_{\varphi }$.

Conversely, let $\left\langle x_{0},x_{0}\right\rangle \in T_{\varphi }$.
Then, for every $b\in A$, we have 
\begin{eqnarray*}
&&\left( \Phi \left( x_{0}b\right) -\Phi \left( x_{0}\right) \varphi \left(
b\right) \right) ^{\ast }\left( \Phi \left( x_{0}b\right) -\Phi \left(
x_{0}\right) \varphi \left( b\right) \right) \\
&=&\left( \Phi \left( x_{0}b\right) ^{\ast }-\varphi \left( b\right) ^{\ast
}\Phi \left( x_{0}\right) ^{\ast }\right) \left( \Phi \left( x_{0}b\right)
-\Phi \left( x_{0}\right) \varphi \left( b\right) \right) \\
&=&\varphi \left( b^{\ast }\left\langle x_{0},x_{0}\right\rangle b\right)
-\varphi \left( b^{\ast }\left\langle x_{0},x_{0}\right\rangle \right)
\varphi \left( b\right) -\varphi \left( b\right) ^{\ast }\varphi \left(
\left\langle x_{0},x_{0}\right\rangle b\right) +\varphi \left( b\right)
^{\ast }\varphi \left( \left\langle x_{0},x_{0}\right\rangle \right) \varphi
\left( b\right) \\
&=&\varphi \left( b^{\ast }\right) \varphi \left( \left\langle
x_{0},x_{0}\right\rangle \right) \varphi \left( b\right) -\varphi \left(
b^{\ast }\right) \varphi \left( \left\langle x_{0},x_{0}\right\rangle
\right) \varphi \left( b\right) \\
&&-\varphi \left( b\right) ^{\ast }\varphi \left( \left\langle
x_{0},x_{0}\right\rangle \right) \varphi \left( b\right) +\varphi \left(
b\right) ^{\ast }\varphi \left( \left\langle x_{0},x_{0}\right\rangle
\right) \varphi \left( b\right) \\
&=&0
\end{eqnarray*}%
whence we deduce that $\Phi \left( x_{0}b\right) =\Phi \left( x_{0}\right)
\varphi \left( b\right) $. Therefore $x_{0}\in X_{\Phi }.$

$(2)$ Let $\Phi \left( x_{0}\left\langle y,z\right\rangle \right) =\Phi
\left( x_{0}\right) \varphi \left( \left\langle y,z\right\rangle \right) $,
for all $y,z\in X$. Hence for every $y,z\in X$, 
\begin{equation*}
\Phi (x_{0})^{\ast }\Phi \left( x_{0}\left\langle y,z\right\rangle \right)
=\Phi (x_{0})^{\ast }\Phi \left( x_{0}\right) \varphi \left( \left\langle
y,z\right\rangle \right) ,
\end{equation*}%
and so $\varphi \left( \left\langle x_{0},x_{0}\right\rangle \left\langle
y,z\right\rangle \right) =\varphi \left( \left\langle
x_{0},x_{0}\right\rangle \right) \varphi \left( \left\langle
y,z\right\rangle \right) .$ Then $\left\langle x_{0},x_{0}\right\rangle \in
M_{\left. \varphi \right\vert _{I}}$, where the ideal $I$ is the closed
linear span of $\{\left\langle y,z\right\rangle :y,z\in X\}$ and so $%
\left\langle x_{0},x_{0}\right\rangle \in M_{\varphi }$, by the Lemma \ref%
{idmdlem}. Moreover, for all $b,c\in A$, we have%
\begin{eqnarray*}
\varphi (c\langle x_{0},x_{0}\rangle ^{2}b) &=&\varphi (\langle x_{0}c^{\ast
},x_{0}\langle x_{0},x_{0}b\rangle \rangle ) \\
&=&\Phi (x_{0}c^{\ast })^{\ast }\Phi (x_{0}\langle x_{0},x_{0}b\rangle
)=\Phi (x_{0}c^{\ast })^{\ast }\Phi (x_{0})\varphi (\langle
x_{0},x_{0}b\rangle ) \\
&=&\varphi (\langle x_{0}c^{\ast },x_{0}\rangle )\varphi (\langle
x_{0},x_{0}b\rangle )=\varphi (c\langle x_{0},x_{0}\rangle )\varphi (\langle
x_{0},x_{0}\rangle b) \\
&=&\varphi (c)\varphi (\langle x_{0},x_{0}\rangle )\varphi (\langle
x_{0},x_{0}\rangle )\varphi (b)=\varphi (c)\varphi (\langle x,x\rangle
^{2})\varphi (b).
\end{eqnarray*}%
This means that $\langle x_{0},x_{0}\rangle ^{2}\in T_{\varphi }$ and so $%
\langle x_{0},x_{0}\rangle \in T_{\varphi }$. Then $x_{0}\in X_{\Phi }$, by
part $(1)$.

$(3)$ Let $x\in X_{\Phi }$. Thus, for every $b\in A$, we have $\Phi \left(
xb\right) =\Phi \left( x\right) \varphi \left( b\right) $. Now, assume that $%
y$ and $z$ are two arbitrary elements in $X$. Then, we have 
\begin{eqnarray*}
\Phi (y\langle x,z\rangle )^{\ast }\Phi (y\langle x,z\rangle ) &=&\varphi
(\langle y\langle x,z\rangle ,y\langle x,z\rangle \rangle )=\varphi (\langle
z,x\rangle \langle y,y\rangle \langle x,z\rangle ) \\
&=&\varphi (\langle z,x\langle y,y\rangle \langle x,z\rangle \rangle )=\Phi
(z)^{\ast }\Phi (x\langle y,y\rangle \langle x,z\rangle ) \\
&=&\Phi (z)^{\ast }\Phi (x)\varphi (\langle y,y\rangle \langle x,z\rangle
)=\Phi (z)^{\ast }\Phi (x)\varphi (\langle x\langle y,y\rangle ,z\rangle ) \\
&=&\Phi (z)^{\ast }\Phi (x)\Phi (x\langle y,y\rangle )^{\ast }\Phi (z)=\Phi
(z)^{\ast }\Phi (x)\varphi (\langle y,y\rangle )\Phi (x)^{\ast }\Phi (z) \\
&=&\Phi (z)^{\ast }\Phi (x)\Phi (y)^{\ast }\Phi (y)\Phi (x)^{\ast }\Phi (z).
\end{eqnarray*}%
Also, 
\begin{eqnarray*}
\Phi (y)^{\ast }\Phi (y\langle x,z\rangle ) &=&\varphi (\langle y,y\langle
x,z\rangle \rangle )=\varphi (\langle x\langle y,y\rangle ,z\rangle )=\Phi
(x\langle y,y\rangle )^{\ast }\Phi (z) \\
&=&\varphi (\langle y,y\rangle )\Phi (x)^{\ast }\Phi (z)=\Phi (y)^{\ast
}\Phi (y)\Phi (x)^{\ast }\Phi (z).
\end{eqnarray*}%
These equalities imply that 
\begin{equation*}
(\Phi (y\langle x,z\rangle )-\Phi (y)\Phi (x)^{\ast }\Phi (z))^{\ast }(\Phi
(y\langle x,z\rangle )-\Phi (y)\Phi (x)^{\ast }\Phi (z))=0
\end{equation*}%
whence we deduce that $\Phi (y\langle x,z\rangle )=\Phi (y)\Phi (x)^{\ast
}\Phi (z)$. Therefore $x\in T_{\Phi }$.

Conversely, assume $x\in T_{\Phi }$, first of all we show that $\langle
x,x\rangle \in M_{\varphi }$. By the definition, $\Phi (y\langle x,z\rangle
)=\Phi (y)\Phi (x)^{\ast }\Phi (z)$, for every $y,z\in X$. Therefore%
\begin{eqnarray*}
\varphi (\langle z,y\rangle \langle x,x\rangle ) &=&\varphi (\langle
z,y\langle x,x\rangle \rangle )=\Phi (z)^{\ast }\Phi (y\langle x,x\rangle
)=\Phi (z)^{\ast }\Phi (y)\Phi (x)^{\ast }\Phi (x) \\
&=&\varphi (\langle z,y\rangle )\varphi (\langle x,x\rangle )
\end{eqnarray*}%
for every $y,z\in X$. Then $\left\langle x,x\right\rangle \in M_{\left.
\varphi \right\vert _{I}}$, where the ideal $I$ is the closed linear span of 
$\{\left\langle y,z\right\rangle :y,z\in X\}$ and so $\left\langle
x,x\right\rangle \in M_{\varphi }$, by the Lemma \ref{idmdlem}. Now, for all 
$b,c\in A$, we have%
\begin{eqnarray*}
\varphi (c\langle x,x\rangle ^{2}b) &=&\varphi (\langle xc^{\ast },x\langle
x,xb\rangle \rangle ) \\
&=&\Phi (xc^{\ast })^{\ast }\Phi (x\langle x,xb\rangle )=\Phi (xc^{\ast
})^{\ast }\Phi (x)\Phi (x)^{\ast }\Phi (xb) \\
&=&\varphi (\langle xc^{\ast },x\rangle )\varphi (\langle x,xb\rangle
)=\varphi (c\langle x,x\rangle )\varphi (\langle x,x\rangle b) \\
&=&\varphi (c)\varphi (\langle x,x\rangle )\varphi (\langle x,x\rangle
)\varphi (b)=\varphi (c)\varphi (\langle x,x\rangle ^{2})\varphi (b).
\end{eqnarray*}%
This means that $\langle x,x\rangle ^{2}\in T_{\varphi }$ and so $\langle
x,x\rangle \in T_{\varphi }$. Then $x\in X_{\Phi }$, by part $(3)$.

$(4)$ If $x_{0}\in X$ and $\Phi \left( x_{0}\left\langle
x_{0},x_{0}\right\rangle \right) =\Phi \left( x_{0}\right) \varphi \left(
\left\langle x_{0},x_{0}\right\rangle \right) $, then 
\begin{eqnarray*}
\varphi \left( \left\langle x_{0},x_{0}\right\rangle \left\langle
x_{0},x_{0}\right\rangle \right) &=&\varphi \left( \left\langle
x_{0},x_{0}\left\langle x_{0},x_{0}\right\rangle \right\rangle \right) \\
&=&\Phi \left( x_{0}\right) ^{\ast }\Phi \left( x_{0}\left\langle
x_{0},x_{0}\right\rangle \right) =\Phi \left( x_{0}\right) ^{\ast }\Phi
\left( x_{0}\right) \varphi \left( \left\langle x_{0},x_{0}\right\rangle
\right) \\
&=&\varphi \left( \left\langle x_{0},x_{0}\right\rangle \right) \varphi
\left( \left\langle x_{0},x_{0}\right\rangle \right)
\end{eqnarray*}%
and so the condition $\parallel\varphi\parallel\leq 1$ implies that $%
\left\langle x_{0},x_{0}\right\rangle \in M_{\varphi }$.

Conversely, if $\left\langle x_{0},x_{0}\right\rangle \in M_{\varphi }$ then 
\begin{eqnarray*}
&&\left( \Phi \left( x_{0}\left\langle x_{0},x_{0}\right\rangle \right)
-\Phi \left( x_{0}\right) \varphi \left( \left\langle
x_{0},x_{0}\right\rangle \right) \right) ^{\ast }\left( \Phi \left(
x_{0}\left\langle x_{0},x_{0}\right\rangle \right) -\Phi \left( x_{0}\right)
\varphi \left( \left\langle x_{0},x_{0}\right\rangle \right) \right) \\
&=&\varphi \left( \left\langle x_{0},x_{0}\right\rangle ^{3}\right) -\varphi
\left( \left\langle x_{0},x_{0}\right\rangle ^{2}\right) \varphi \left(
\left\langle x_{0},x_{0}\right\rangle \right) -\varphi \left( \left\langle
x_{0},x_{0}\right\rangle \right) \varphi \left( \left\langle
x_{0},x_{0}\right\rangle ^{2}\right) +\varphi \left( \left\langle
x_{0},x_{0}\right\rangle \right) ^{3} \\
&=&0.
\end{eqnarray*}%
Hence $\Phi \left( x_{0}\left\langle x_{0},x_{0}\right\rangle \right) =\Phi
\left( x_{0}\right) \varphi \left( \left\langle x_{0},x_{0}\right\rangle
\right) $.
\end{proof}


\begin{example}
\label{counterexample} The map $\Phi :M_{2}(\mathbb{C})\rightarrow L(\mathbb{%
C}^{2},\mathbb{C}^{2})$ defined by 
\begin{equation*}
\Phi \left( \left[ 
\begin{array}{cc}
x_{11} & x_{12} \\ 
x_{21} & x_{22}%
\end{array}%
\right] \right) =\left[ 
\begin{array}{cc}
\sqrt{2}x_{11} & 0 \\ 
\sqrt{2}x_{21} & 0%
\end{array}%
\right] 
\end{equation*}%
is a $\varphi $-map, where $\varphi :M_{2}(\mathbb{C})\rightarrow M_{2}(%
\mathbb{C})\ $is given by 
\begin{equation*}
\varphi \left( \left[ 
\begin{array}{cc}
a_{11} & a_{12} \\ 
a_{21} & a_{22}%
\end{array}%
\right] \right) =\left[ 
\begin{array}{cc}
2a_{11} & 0 \\ 
0 & 0%
\end{array}%
\right] .
\end{equation*}%
It is easy to check that $\varphi $ is a completely positive linear map
which is not contractive, and if $x_{0}=\left[ 
\begin{array}{cc}
1 & 1 \\ 
0 & 0%
\end{array}%
\right] $, then $\Phi \left( x_{0}\left\langle x_{0},x_{0}\right\rangle
\right) =\Phi \left( x_{0}\right) \varphi \left( \left\langle
x_{0},x_{0}\right\rangle \right) $, but $\left\langle
x_{0},x_{0}\right\rangle \notin M_{\varphi }$. Therefore, the contractivity
of $\varphi $ in Proposition \ref{characterization} (4) can not be neglected.
\end{example}


\begin{corollary}
For every $\varphi$-map $\Phi$, we have 
\begin{eqnarray*}
X_{\Phi }=T_{\Phi }&=&\{y\in X:\Phi \left( x\left\langle y,z\right\rangle
\right) =\Phi \left( x\right) \Phi \left( y\right) ^{\ast }\Phi \left(
z\right) \text{ for all }x,z\in X\} \\
&=& \{x\in X:\Phi \left( x\left\langle y,z\right\rangle \right) =\Phi \left(
x\right) \Phi \left( y\right) ^{\ast }\Phi \left( z\right) \text{ for all }%
y,z\in X\}
\end{eqnarray*}
\end{corollary}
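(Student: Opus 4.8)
The plan is to observe that most of the asserted chain of equalities is already in hand, so that only one genuinely new identification must be carried out. The equality $X_{\Phi }=T_{\Phi }$ is precisely Proposition \ref{characterization}$(3)$, and the first set displayed after $T_{\Phi }$,
\[
\{y\in X:\Phi (x\langle y,z\rangle )=\Phi (x)\Phi (y)^{\ast }\Phi (z)\text{ for all }x,z\in X\},
\]
is nothing other than the definition of $T_{\Phi }$. Hence the only piece requiring an argument is that $X_{\Phi }$ coincides with the ``first-slot'' set
\[
S=\{x\in X:\Phi (x\langle y,z\rangle )=\Phi (x)\Phi (y)^{\ast }\Phi (z)\text{ for all }y,z\in X\}.
\]

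To handle this, I would exploit the defining property of a $\varphi $-map, namely $\varphi (\langle y,z\rangle )=\Phi (y)^{\ast }\Phi (z)$ for all $y,z\in X$. Substituting this into the condition defining $S$, an element $x_{0}$ belongs to $S$ if and only if $\Phi (x_{0}\langle y,z\rangle )=\Phi (x_{0})\varphi (\langle y,z\rangle )$ for all $y,z\in X$. But this is exactly the condition appearing in Proposition \ref{characterization}$(2)$, which asserts its equivalence with $x_{0}\in X_{\Phi }$. Applying that proposition gives $S=X_{\Phi }$ and closes the chain.

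Thus the corollary is essentially a bookkeeping consequence of Proposition \ref{characterization}, and the substantive work lies entirely inside that proposition. The inclusion $X_{\Phi }\subseteq S$ is immediate: if $x_{0}\in X_{\Phi }$, so that $\Phi (x_{0}b)=\Phi (x_{0})\varphi (b)$ for every $b\in A$, one merely specializes $b=\langle y,z\rangle $. The reverse inclusion is the nontrivial direction, but it has already been established in Proposition \ref{characterization}$(2)$ through the passage to $\langle x_{0},x_{0}\rangle \in T_{\varphi }$. I therefore expect no real obstacle beyond correctly invoking parts $(2)$ and $(3)$ of the proposition, and the write-up should amount to a short two-line argument.
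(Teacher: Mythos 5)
Your proposal is correct and matches the paper's intent: the paper states this corollary without proof as an immediate consequence of Proposition \ref{characterization}, and your reduction via the $\varphi$-map identity $\Phi(y)^{\ast}\Phi(z)=\varphi(\langle y,z\rangle)$, together with parts $(2)$ and $(3)$ of that proposition, is exactly the intended bookkeeping argument.
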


\ Let $a\in M_{\varphi }$. By the polarization identity, we have $\varphi
(cab)=\varphi (c)\varphi (a)\varphi (b)$ for all $b,c\in A$ if and only if $%
\varphi (b^{\ast }ab)=\varphi (b^{\ast })\varphi (a)\varphi (b)$ for all $%
b\in A$. Therefore, 
\begin{equation*}
T_{\varphi }=\{a\in M_{\varphi }:\varphi (b^{\ast }ab)=\varphi (b^{\ast
})\varphi (a)\varphi (b)\text{ for all }b\in A\}.
\end{equation*}
This fact and the previous proposition imply the following corollary.

\begin{corollary}
\label{cor} Let $X$ be a Hilbert $A$-module, $\varphi :A\rightarrow L(%
\mathcal{H})$ a completely positive linear map, $\Phi :X\rightarrow L(%
\mathcal{H},\mathcal{K})$ a $\varphi $-map and $x_{0}\in X$. Then $x_{0}\in
X_{\Phi }$ if and only if $\left\langle x_{0},x_{0}\right\rangle \in
M_{\varphi }$ and $\varphi \left( b^{\ast }\left\langle
x_{0},x_{0}\right\rangle b\right) =\varphi \left( b^{\ast }\right) \varphi
\left( \left\langle x_{0},x_{0}\right\rangle \right) \varphi \left( b\right) 
$ for all $b\in A.$
\end{corollary}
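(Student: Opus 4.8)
The plan is to deduce the statement directly from Proposition~\ref{characterization}(1) together with the reformulation of $T_{\varphi}$ recorded in the paragraph immediately preceding the corollary. Proposition~\ref{characterization}(1) tells us that for $x_{0}\in X$ the membership $x_{0}\in X_{\Phi}$ is equivalent to $\langle x_{0},x_{0}\rangle\in T_{\varphi}$. Thus the whole problem reduces to describing, in workable terms, when the single positive element $\langle x_{0},x_{0}\rangle$ of $A$ belongs to $T_{\varphi}$; no further interaction between $\Phi$ and the module structure is needed once this reduction is made.

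For that description I would invoke the identity
\begin{equation*}
T_{\varphi}=\{a\in M_{\varphi}:\varphi(b^{\ast}ab)=\varphi(b^{\ast})\varphi(a)\varphi(b)\text{ for all }b\in A\},
\end{equation*}
which is precisely the polarization reformulation stated just above the corollary (the polarization identity allows one to collapse the two-variable condition $\varphi(bac)=\varphi(b)\varphi(a)\varphi(c)$ for all $b,c\in A$ to its diagonal instance $c=b^{\ast}$). Applying this with $a=\langle x_{0},x_{0}\rangle$ yields at once that $\langle x_{0},x_{0}\rangle\in T_{\varphi}$ if and only if $\langle x_{0},x_{0}\rangle\in M_{\varphi}$ and $\varphi(b^{\ast}\langle x_{0},x_{0}\rangle b)=\varphi(b^{\ast})\varphi(\langle x_{0},x_{0}\rangle)\varphi(b)$ for all $b\in A$. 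Chaining this equivalence with the one from Proposition~\ref{characterization}(1) produces exactly the asserted characterization of $X_{\Phi}$.

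Honestly, there is no genuine obstacle remaining at this point: the corollary is an immediate consequence, because both of its ingredients are already in hand. The substantive work was done earlier, first in proving Proposition~\ref{characterization}(1) (whose harder direction establishes $\langle x_{0},x_{0}\rangle\in T_{\varphi}\Rightarrow x_{0}\in X_{\Phi}$ via the vanishing of a positive operator $(\Phi(x_{0}b)-\Phi(x_{0})\varphi(b))^{\ast}(\Phi(x_{0}b)-\Phi(x_{0})\varphi(b))$) and then in the polarization argument that rewrites $T_{\varphi}$ using only diagonal test elements. The one point deserving a word of care is simply to confirm that the element being tested, $a=\langle x_{0},x_{0}\rangle$, indeed lies in $M_{\varphi}$ as required by the reformulation; but this is built into the displayed condition and is exactly the first of the two stated requirements, so the two conditions of the corollary reproduce membership in $T_{\varphi}$ verbatim.
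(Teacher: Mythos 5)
Your proposal is correct and follows exactly the paper's own route: the paper derives this corollary by combining Proposition \ref{characterization}(1) (namely $x_{0}\in X_{\Phi}$ iff $\left\langle x_{0},x_{0}\right\rangle \in T_{\varphi}$) with the polarization-identity reformulation $T_{\varphi}=\{a\in M_{\varphi}:\varphi (b^{\ast }ab)=\varphi (b^{\ast })\varphi (a)\varphi (b)\text{ for all }b\in A\}$ stated in the paragraph immediately preceding it. Nothing further is needed.
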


The above results show that $\varphi$-module domains of $\varphi$-maps are
determined by $\varphi$ not by $\varphi$-maps. Indeed, Proposition \ref%
{characterization} and Corollary \ref{cor} say that the structure of $%
X_{\Phi }$ does not depend on the formula of $\Phi $ but it is completely
determined by $\varphi $. To be more precise, we have the following theorem: 

\begin{theorem}
\label{theorem37} Let $X$ be a Hilbert $A$-module and $\varphi :A\rightarrow
L(\mathcal{H})$ a completely positive linear map. If $\ \Phi
_{1}:X\rightarrow L(\mathcal{H},\mathcal{K}_{1})$ and $\Phi
_{2}:X\rightarrow L(\mathcal{H},\mathcal{K}_{2})$ are two $\varphi $-maps,
then $X_{\Phi _{1}}=X_{\Phi _{2}}$.
\end{theorem}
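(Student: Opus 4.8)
The plan is to observe that Theorem \ref{theorem37} is an essentially immediate consequence of the characterization already established in Proposition \ref{characterization}$(1)$. The decisive point is that the defining condition for membership in $X_{\Phi}$ can be rephrased in a form that mentions only $\varphi$, $A$, and $X$, with no reference whatsoever to the particular $\varphi$-map $\Phi$. Concretely, Proposition \ref{characterization}$(1)$ tells us that for any $\varphi$-map $\Phi$ and any $x_{0}\in X$, one has $x_{0}\in X_{\Phi }$ if and only if $\left\langle x_{0},x_{0}\right\rangle \in T_{\varphi }$.

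First I would recall that the ternary domain $T_{\varphi }$ is defined purely intrinsically, namely
\begin{equation*}
T_{\varphi }=\{a\in M_{\varphi }:\varphi (ba^{\ast }c)=\varphi (b)\varphi (a)^{\ast }\varphi (c)\text{ for all }b,c\in A\},
\end{equation*}
so that $T_{\varphi }$ is a subset of $A$ that depends only on the completely positive linear map $\varphi $ and not on any choice of target Hilbert space or $\varphi $-map. Likewise, the inner product $\left\langle x_{0},x_{0}\right\rangle $ is computed in the Hilbert module $X$ and is insensitive to $\Phi $.

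Then I would simply apply Proposition \ref{characterization}$(1)$ twice, once to $\Phi _{1}:X\rightarrow L(\mathcal{H},\mathcal{K}_{1})$ and once to $\Phi _{2}:X\rightarrow L(\mathcal{H},\mathcal{K}_{2})$, to obtain the two equivalences
\begin{equation*}
x_{0}\in X_{\Phi _{1}}\iff \left\langle x_{0},x_{0}\right\rangle \in T_{\varphi }\iff x_{0}\in X_{\Phi _{2}}
\end{equation*}
for every $x_{0}\in X$. Chaining these equivalences through the common middle condition yields $X_{\Phi _{1}}=X_{\Phi _{2}}$, which is exactly the asserted equality. (One could equally invoke Corollary \ref{cor} in place of Proposition \ref{characterization}$(1)$; either gives a description of the module domain in terms of $\varphi $ alone.)

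There is no genuine obstacle remaining at this stage: all the work has been absorbed into Proposition \ref{characterization}, whose proof already did the delicate step of converting the $\Phi $-dependent condition $\Phi (x_{0}b)=\Phi (x_{0})\varphi (b)$ into the $\Phi $-free condition $\left\langle x_{0},x_{0}\right\rangle \in T_{\varphi }$ by means of a positivity argument. Consequently the only thing to be careful about when writing this proof is to state the logic cleanly as a biconditional passing through $T_{\varphi }$, and to emphasize that the independence of $X_{\Phi }$ from $\Phi $ is precisely what legitimizes the later notation $X_{\varphi }$ and the terminology ``ternary domain of $\varphi $ on $X$.''
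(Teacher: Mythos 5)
Your proposal is correct and is essentially identical to the paper's own proof: the authors likewise apply Proposition \ref{characterization}$(1)$ to each of $\Phi_{1}$ and $\Phi_{2}$ and chain the two equivalences through the $\Phi$-independent condition $\left\langle x,x\right\rangle \in T_{\varphi }$. Your additional remark that $T_{\varphi }$ is defined intrinsically from $\varphi$ alone is exactly the point that makes the argument work.
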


\begin{proof}
Let $\Phi_1$ and $\Phi_2$ be two operator-valued $\varphi$-maps. Proposition %
\ref{characterization} implies that $x\in X_{\Phi_1}$ if and only if $%
\langle x,x\rangle\in T_\varphi$ if and only if $x\in X_{\Phi_2}$.
\end{proof}

Then, we can introduce the following definition. 

\begin{definition}
\label{ternary}Let $X$ be a Hilbert $A$-module and $\varphi :A\rightarrow L(%
\mathcal{H})$ a completely positive linear map. We denote the $\varphi $%
-module domain of each $\varphi $-map on $X$ by $X_{\varphi }$ and call it
the \textbf{ternary domain of }$\varphi $ \textbf{on } $X$.
\end{definition}

We note that for every Hilbert $C^{\ast }$-module $X$ over a $C^{\ast }$%
-algebra $A$ and every completely positive linear map $\varphi :A\rightarrow
L(\mathcal{H})$, there exists a $\varphi $-map on $X$ \cite[Lemma 2.2]{ABN2}%
. Hence, the above definition is well-define. 

\begin{remark}
\label{help} We note that for every completely positive map $\varphi
:A\rightarrow L(\mathcal{H})$, $X_{\varphi }$ is a Hilbert $C^{\ast }$%
-module over the $C^{\ast }$-algebra $T_{\varphi }$ and every $\varphi $-map 
$\Phi :X\rightarrow L(\mathcal{H},\mathcal{K})$, is a ternary map on $%
X_{\varphi }$.

Also, by Proposition \ref{characterization} and Corollary \ref{cor}, we have 
\begin{eqnarray*}
X_{\varphi } &=&\{x\in X\mid \langle x,x\rangle \in M_{\varphi },\hspace{1mm}%
\varphi (a\langle x,x\rangle b)=\varphi (a)\varphi (\langle x,x\rangle
)\varphi (b)\hspace{1mm}\text{for all }a,b\in A\} \\
&=&\{x\in X\mid \langle x,x\rangle \in M_{\varphi },\hspace{1mm}\varphi
(b^{\ast }\langle x,x\rangle b)=\varphi (b)^{\ast }\varphi (\langle
x,x\rangle )\varphi (b)\hspace{1mm}\text{for all }b\in A\}.
\end{eqnarray*}%
Moreover, for every $a,b\in A$, and for every $x,y\in X_{\varphi }$ we have 
\begin{equation*}
\varphi \left( a\left\langle x,y\right\rangle b\right) =\varphi \left(
a\right) \varphi \left( \left\langle x,y\right\rangle \right) \varphi \left(
b\right) .
\end{equation*}%
The above relations explain why we use the name \textquotedblleft\ ternary
domain" for $X_{\varphi }.$
\end{remark}


\begin{proposition}
\label{tttheorem} Let $X$ be a Hilbert $A$-module, $\varphi :A\rightarrow L(%
\mathcal{H})$ a completely positive linear map and $\Phi :X\rightarrow L(%
\mathcal{H},\mathcal{K})$ a $\varphi $-map. Then the following statements
hold:

\begin{enumerate}
\item $XT_{\varphi }=X_{\varphi };$

\item $\Phi (xa)=\Phi (x)\varphi (a)$, for every $a\in M_{\varphi }$ and $%
x\in X;$

\item $\Phi(xab)=\Phi(x)\varphi(a)\varphi(b)$, for every $a\in T_\varphi$, $%
x\in X$ and $b\in A$.
\end{enumerate}
\end{proposition}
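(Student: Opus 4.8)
The plan is to prove the three statements in a convenient order, starting with the module-level identities (2) and (3) that describe how $\Phi$ interacts with elements of $M_\varphi$ and $T_\varphi$, and then using these together with the characterizations in Remark~\ref{help} to establish the set equality $XT_\varphi = X_\varphi$ in (1).

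First I would prove (2). Fix $a\in M_\varphi$ and $x\in X$. The natural approach is the standard positivity trick used throughout Proposition~\ref{characterization}: expand the self-adjoint expression
\begin{equation*}
\bigl(\Phi(xa)-\Phi(x)\varphi(a)\bigr)^\ast\bigl(\Phi(xa)-\Phi(x)\varphi(a)\bigr)
\end{equation*}
using the defining relation $\Phi(u)^\ast\Phi(v)=\varphi(\langle u,v\rangle)$. The four resulting terms are $\varphi(a^\ast\langle x,x\rangle a)$, $\varphi(a^\ast\langle x,x\rangle)\varphi(a)$, $\varphi(a)^\ast\varphi(\langle x,x\rangle a)$ and $\varphi(a)^\ast\varphi(\langle x,x\rangle)\varphi(a)$. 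Since $a\in M_\varphi$, the multiplicative-domain identities let me factor $\varphi$ across products involving $a$ in each term, and everything collapses to $0$. I expect this to go through cleanly because only the $M_\varphi$ property of $a$ is needed, not anything about $T_\varphi$.

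Next I would prove (3). Fix $a\in T_\varphi$, $x\in X$ and $b\in A$. Here I would again use the positivity trick, this time on
\begin{equation*}
\bigl(\Phi(xab)-\Phi(x)\varphi(a)\varphi(b)\bigr)^\ast\bigl(\Phi(xab)-\Phi(x)\varphi(a)\varphi(b)\bigr),
\end{equation*}
converting each cross term into an expression of the form $\varphi(\,\cdot\,)$ via the $\varphi$-map relation. The key input is the characterization $T_\varphi=\{a:\varphi(bac)=\varphi(b)\varphi(a)\varphi(c)\text{ for all }b,c\}$ recorded just after the definition of $T_\varphi$; this is exactly what I need to break up terms such as $\varphi(b^\ast a^\ast\langle x,x\rangle ab)$ into $\varphi(b^\ast a^\ast)\,\varphi(\langle x,x\rangle)\,\varphi(ab)$ and similar. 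Showing the whole expression vanishes then forces $\Phi(xab)=\Phi(x)\varphi(a)\varphi(b)$. Alternatively, (3) can be deduced from (2) once one observes $xa\in X_\varphi$, using that $\Phi$ acts as a ternary/$\varphi$-module map on $X_\varphi$ via Theorem~\ref{main theorem}(3) and Proposition~\ref{characterization}(2); but the direct positivity computation is cleaner and self-contained.

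Finally, for (1) I would argue by double inclusion. For $XT_\varphi\subseteq X_\varphi$, take $x\in X$ and $a\in T_\varphi$; since $T_\varphi$ is a $\ast$-ideal of $M_\varphi$, the element $\langle xa,xa\rangle=a^\ast\langle x,x\rangle a$ lies in $T_\varphi$ (using $T_\varphi A T_\varphi=T_\varphi$, or directly the factorization identities for $T_\varphi$), and then Proposition~\ref{characterization}(1) gives $xa\in X_\varphi$; passing to finite sums and closures (recall $X_\varphi$ is closed by Theorem~\ref{main theorem}(1)) yields $XT_\varphi\subseteq X_\varphi$. The reverse inclusion $X_\varphi\subseteq XT_\varphi$ is the main obstacle: given $x\in X_\varphi$, I must exhibit $x$ as a limit of elements of the form $\sum x_i a_i$ with $a_i\in T_\varphi$. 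The natural device is an approximate unit $(u_\lambda)$ for the $C^\ast$-algebra $T_\varphi$, writing $x=\lim_\lambda x u_\lambda$; this requires showing $xu_\lambda\to x$ in the norm of $X$. Since $x\in X_\varphi$ means $\langle x,x\rangle\in T_\varphi$, I would estimate $\|x-xu_\lambda\|^2=\|\langle x-xu_\lambda,\,x-xu_\lambda\rangle\|=\|\langle x,x\rangle-\langle x,x\rangle u_\lambda-u_\lambda\langle x,x\rangle+u_\lambda\langle x,x\rangle u_\lambda\|$, which tends to $0$ because $\langle x,x\rangle\in T_\varphi$ and $(u_\lambda)$ is an approximate unit for $T_\varphi$. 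This establishes $x\in \overline{XT_\varphi}$, and since $X_\varphi$ is already a Hilbert $T_\varphi$-module the two sets coincide.
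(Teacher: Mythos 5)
Your proposal is correct, and for parts (2) and the inclusion $XT_{\varphi}\subseteq X_{\varphi}$ of part (1) it coincides with the paper's argument (the same positivity trick for (2), and the same use of $\langle xa,xa\rangle=a^{\ast}\langle x,x\rangle a\in T_{\varphi}AT_{\varphi}=T_{\varphi}$ together with Proposition \ref{characterization}(1)). The two places where you diverge are both legitimate. For the reverse inclusion $X_{\varphi}\subseteq XT_{\varphi}$, the paper simply invokes that $X_{\varphi}$ is a Hilbert $C^{\ast}$-module over $T_{\varphi}$ (Remark \ref{help}) and writes $X_{\varphi}=X_{\varphi}T_{\varphi}$; your approximate-unit computation $\Vert x-xu_{\lambda}\Vert^{2}=\Vert\langle x,x\rangle-\langle x,x\rangle u_{\lambda}-u_{\lambda}\langle x,x\rangle+u_{\lambda}\langle x,x\rangle u_{\lambda}\Vert\to 0$, using $\langle x,x\rangle\in T_{\varphi}$, is exactly the standard proof of that fact, so you are unpacking rather than changing the argument. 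For (3), the paper deduces the identity from (1) and (2) (since $xa\in XT_{\varphi}=X_{\varphi}=X_{\Phi}$ gives $\Phi(xab)=\Phi(xa)\varphi(b)=\Phi(x)\varphi(a)\varphi(b)$), whereas your primary route is a direct expansion of $\bigl(\Phi(xab)-\Phi(x)\varphi(a)\varphi(b)\bigr)^{\ast}\bigl(\Phi(xab)-\Phi(x)\varphi(a)\varphi(b)\bigr)$; this does go through, because $T_{\varphi}$ is a $\ast$-ideal (so $a^{\ast}\in T_{\varphi}$) and the characterization $\varphi(bac)=\varphi(b)\varphi(a)\varphi(c)$ collapses all four terms to $\varphi(b)^{\ast}\varphi(a)^{\ast}\varphi(\langle x,x\rangle)\varphi(a)\varphi(b)$. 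The direct computation is self-contained and does not need part (1); the paper's route is shorter but makes (3) logically dependent on (1). You also correctly note the paper's route as an alternative, so nothing is missing.
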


\begin{proof}
$(1)$ Let $a\in T_{\varphi }$ and $x\in X$. Since $\left\langle
xa,xa\right\rangle =a^{\ast }\left\langle x,x\right\rangle a$ and $%
T_{\varphi }AT_{\varphi }=T_{\varphi },$ $\left\langle xa,xa\right\rangle $
is an element in $T_{\varphi },$ and then, by Proposition \ref%
{characterization}, $xa\in X_{\varphi }$. Thus $XT_{\varphi }\subseteq
X_{\varphi }$. On the other hand, $X_{\varphi }$ is a Hilbert $C^{\ast }$%
-module over $T_{\varphi }$ and hence $X_{\varphi }=X_{\varphi }T_{\varphi
}\subseteq XT_{\varphi }$. Thus we showed that $XT_{\varphi }=X_{\varphi }$.

$(2)$ Let $a\in M_\varphi$ and $x\in X$. Then we have 
\begin{eqnarray*}
&&\left( \Phi \left( x a\right) -\Phi \left( x \right) \varphi \left( a
\right) \right) ^{\ast }\left( \Phi \left( x a \right) -\Phi \left( x
\right) \varphi \left( a \right) \right) \\
&=&\left( \Phi \left( x a \right) ^{\ast }-\varphi \left( a \right) ^{\ast
}\Phi \left( x \right) ^{\ast }\right) \left( \Phi \left( x a \right) -\Phi
\left( x \right) \varphi \left( a \right) \right) \\
&=&\varphi \left( a^{\ast }\left\langle x , x \right\rangle a \right)
-\varphi \left( a^{\ast }\left\langle x , x \right\rangle \right) \varphi
\left( a \right) -\varphi \left( a \right) ^{\ast }\varphi \left(
\left\langle x , x \right\rangle a \right) +\varphi \left( a \right) ^{\ast
}\varphi \left( \left\langle x , x \right\rangle \right) \varphi \left( a
\right) \\
&=& \varphi \left( a^{\ast }\right) \varphi \left( \left\langle x , x
\right\rangle \right) \varphi \left( a \right) -\varphi \left( a^{\ast
}\right) \varphi \left( \left\langle x , x \right\rangle \right) \varphi
\left( a \right) \\
&&-\varphi \left( a \right) ^{\ast }\varphi \left( \left\langle x , x
\right\rangle \right) \varphi \left( a \right) +\varphi \left( a \right)
^{\ast }\varphi \left( \left\langle x , x \right\rangle \right) \varphi
\left( a\right) \\
&=&0
\end{eqnarray*}%
whence we deduce that $\Phi \left( x a \right) =\Phi \left( x \right)
\varphi \left( a \right) $.

$(3)$ This part follows from $(1)$ and $(2)$.
\end{proof}


\begin{corollary}
\label{full} If the Hilbert $C^{\ast }$-module $X$ is full, then the Hilbert 
$C^{\ast }$-module $X_{\varphi }$ over $T_{\varphi }$ is full.
\end{corollary}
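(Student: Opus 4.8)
The plan is to prove the two inclusions between $T_{\varphi }$ and the closed linear span $J$ of $\{\langle u,v\rangle :u,v\in X_{\varphi }\}$, since fullness of $X_{\varphi }$ over $T_{\varphi }$ is precisely the assertion $J=T_{\varphi }$. The inclusion $J\subseteq T_{\varphi }$ is immediate: by Remark~\ref{help}, $X_{\varphi }$ is a Hilbert $C^{\ast }$-module over $T_{\varphi }$, so $\langle u,v\rangle \in T_{\varphi }$ for all $u,v\in X_{\varphi }$, and $T_{\varphi }$ is closed. The substance of the statement is therefore the reverse inclusion $T_{\varphi }\subseteq J$.

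For the reverse inclusion I would exploit the identification $X_{\varphi }=XT_{\varphi }$ from Proposition~\ref{tttheorem}(1). Given $x,y\in X$ and $a,b\in T_{\varphi }$, the elements $xa$ and $yb$ lie in $XT_{\varphi }=X_{\varphi }$, and a direct computation with the $A$-valued inner product gives
\begin{equation*}
\langle xa,yb\rangle =a^{\ast }\langle x,y\rangle b,
\end{equation*}
so each such element belongs to $J$. Consequently $J$ contains the closed linear span of $\{a^{\ast }\langle x,y\rangle b:x,y\in X,\ a,b\in T_{\varphi }\}$.

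Now I would bring in the hypothesis that $X$ is full, namely that the closed linear span of $\{\langle x,y\rangle :x,y\in X\}$ is all of $A$. Combining this with the previous step, the closed linear span of $\{a^{\ast }\langle x,y\rangle b\}$ equals the closed linear span of $T_{\varphi }^{\ast }AT_{\varphi }=T_{\varphi }AT_{\varphi }$, using that $T_{\varphi }$ is a $\ast $-ideal. Since, as recorded before Example~\ref{example0}, $T_{\varphi }AT_{\varphi }=T_{\varphi }$, this closed linear span is exactly $T_{\varphi }$. Hence $T_{\varphi }\subseteq J$, which together with the first inclusion yields $J=T_{\varphi }$; that is, $X_{\varphi }$ is a full Hilbert $T_{\varphi }$-module.

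The only point requiring care---and the main, though routine, obstacle---is the passage from spanning sets to their closed linear spans: I must check that replacing $\langle x,y\rangle $ by an arbitrary $c\in A$ in the expression $a^{\ast }cb$ does not enlarge the closed linear span beyond that of $\{a^{\ast }\langle x,y\rangle b\}$. This follows from fullness of $X$ together with the joint continuity of multiplication and the continuity of the involution, so no genuine difficulty arises and everything reduces to the identity $T_{\varphi }AT_{\varphi }=T_{\varphi }$.
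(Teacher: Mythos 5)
Your argument is correct and is essentially the proof the paper intends: the corollary is stated without proof as an immediate consequence of Proposition \ref{tttheorem}(1) ($XT_{\varphi }=X_{\varphi }$), the identity $\langle xa,yb\rangle =a^{\ast }\langle x,y\rangle b$, and the relation $T_{\varphi }AT_{\varphi }=T_{\varphi }$ recorded before Example \ref{example0}. You have simply written out those details, including the routine closed-linear-span and continuity checks.
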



\begin{proposition}
Let $X$ be a full Hilbert $A$-module and $\varphi :A\rightarrow L(\mathcal{H}%
)$ be a completely positive linear map. Then the following statements are
equivalent:

\begin{enumerate}
\item $\varphi $ is a $\ast $-homomorphism;

\item $X_{\varphi }=X;$

\item every $\varphi $-map on $X$ is a ternary map;

\item there is a $\varphi $-map on $X$ that is a ternary map.\newline
Moreover, if $A$ is unital and $X$ has a unitary vector $x_{0}$ (i.e., $%
\left\langle x_{0},x_{0}\right\rangle =1_{A}$), then the above statements
are equivalent to

\item $x_{0}\in X_{\varphi }$.
\end{enumerate}
\end{proposition}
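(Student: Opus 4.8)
The plan is to establish the cyclic chain $(1)\Rightarrow(2)\Rightarrow(3)\Rightarrow(4)\Rightarrow(1)$ among the first four statements, and then to attach $(5)$ to the cycle via $(2)\Rightarrow(5)\Rightarrow(1)$ under the extra hypotheses. Throughout I would lean on the two structural facts already available: that $x_{0}\in X_{\Phi}$ if and only if $\langle x_{0},x_{0}\rangle\in T_{\varphi}$ (Proposition \ref{characterization}), and that $XT_{\varphi}=X_{\varphi}$ (Proposition \ref{tttheorem}), together with the existence of at least one $\varphi$-map on $X$ (cited from \cite[Lemma 2.2]{ABN2}).

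For $(1)\Rightarrow(2)$ I would first observe that a $\ast$-homomorphism $\varphi$ has $M_{\varphi}=A$, and that the defining identity $\varphi(ba^{\ast}c)=\varphi(b)\varphi(a)^{\ast}\varphi(c)$ holds automatically for every $a\in A$ because $\varphi$ is multiplicative and $\ast$-preserving; hence $T_{\varphi}=A$ and $X_{\varphi}=XT_{\varphi}=XA=X$. The implication $(2)\Rightarrow(3)$ is then immediate: by Theorem \ref{main theorem} and Remark \ref{help} every $\varphi$-map restricted to its module domain is a ternary map, and when $X_{\varphi}=X$ that restriction is all of $X$. Since a $\varphi$-map always exists on $X$, the implication $(3)\Rightarrow(4)$ needs no work.

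The substantive step is $(4)\Rightarrow(1)$, which I expect to be the main obstacle. Given a $\varphi$-map $\Phi$ that is globally ternary, we have $T_{\Phi}=X$, so $X_{\varphi}=X$ and therefore $\langle x,x\rangle\in T_{\varphi}$ for every $x\in X$ by Proposition \ref{characterization}. The key is to promote this from the diagonal inner products to all of $A$: since $T_{\varphi}$ is a closed linear subspace, the polarization identity gives $\langle x,y\rangle\in T_{\varphi}$ for all $x,y\in X$, and the closed linear span of $\{\langle x,y\rangle:x,y\in X\}$ is a two-sided ideal which, by fullness of $X$, equals $A$. Hence $A\subseteq T_{\varphi}\subseteq M_{\varphi}$, forcing $M_{\varphi}=A$; combined with the fact that a positive linear map is $\ast$-preserving, this makes $\varphi$ a $\ast$-homomorphism. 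The delicate point to get right is the use of fullness in the form ``closed linear span of the inner products equals $A$'', which is legitimate because $a\langle x,y\rangle=\langle xa^{\ast},y\rangle$ and $\langle x,y\rangle a=\langle x,ya\rangle$ show that this span is already a two-sided ideal, so it coincides with the ideal generated by the inner products.

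Finally, for the additional equivalence, $(2)\Rightarrow(5)$ is trivial since $x_{0}\in X=X_{\varphi}$. For $(5)\Rightarrow(1)$ I would use that $x_{0}\in X_{\varphi}$ gives $1_{A}=\langle x_{0},x_{0}\rangle\in T_{\varphi}$, and then read off from the characterization $T_{\varphi}=\{a\in A:\varphi(bac)=\varphi(ba)\varphi(c)=\varphi(b)\varphi(ac)=\varphi(b)\varphi(a)\varphi(c)\text{ for all }b,c\in A\}$, specialized to $a=1_{A}$, that $\varphi(bc)=\varphi(b)\varphi(c)$ for all $b,c\in A$; thus $\varphi$ is multiplicative and, being $\ast$-preserving, a $\ast$-homomorphism. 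This closes all the equivalences.
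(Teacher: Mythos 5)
Your proof is correct and follows essentially the same route as the paper: the paper's (one-line) proof reduces everything to the observation that $\varphi$ is a $\ast$-homomorphism iff $T_{\varphi }=A$ (and, in the unital case, iff $1_{A}\in T_{\varphi }$), which is exactly the pivot of your argument via Proposition \ref{characterization}, Proposition \ref{tttheorem}, polarization and fullness. You have merely written out the details the paper leaves implicit, and all the steps (including the use of fullness as ``closed linear span of inner products equals $A$'') check out.
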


\begin{proof}
Obviously, $\varphi $ is a $*$-homomorphism if and only if $T_{\varphi }=A$.
Also when $A$ is unital, $T_{\varphi }=A$ if and only if $1_{A} \in
T_{\varphi }$.
\end{proof}


\section{Ternary domains and Dilation}

It is well known that the multiplicative domain of a completely positive
linear map can be characterized in terms of its minimal Stinespring dilation
triple (Theorem \ref{mtch0} ). In this section we use this results to
provide some similar results about the ternary domain of a completely
positive map on a Hilbert $C^{\ast }$-module. 

\begin{theorem}
\label{mtch0} \cite{P,G} Let $\varphi :A\rightarrow L(\mathcal{H})$ be a
completely positive linear map, and $\left( \pi _{\varphi },\mathcal{H}%
_{\varphi },V_{\varphi }\right) $ a minimal Stinespring representation
associated with $\varphi $. Then:

\begin{enumerate}
\item $a\in M_{\varphi }$ iff $V_{\varphi }^{\ast }\pi _{\varphi }(a)($id$_{%
\mathcal{H}_{\varphi }}-V_{\varphi }V_{\varphi }^{\ast })=0$ and $($id$_{%
\mathcal{H}_{\varphi }}-V_{\varphi }V_{\varphi }^{\ast })\pi _{\varphi
}(a)V_{\varphi }=0$.

\item $a\in M_{\varphi }$ iff $V_{\varphi }^{\ast }\pi _{\varphi
}(a)=\varphi (a)V_{\varphi }^{\ast }$ and $\pi _{\varphi }(a)V_{\varphi
}=V_{\varphi }\varphi (a)$.

\item $V_{\varphi }V_{\varphi }^{\ast }\in \pi _{\varphi }(M_{\varphi
})^{\prime }$, $V_{\varphi }^{\ast }V_{\varphi }\in \varphi (M_{\varphi
})^{\prime }$, $V_{\varphi }^{\ast }V_{\varphi }$ acts on $[\varphi
(M_{\varphi })\mathcal{H}]$ identically and $V_{\varphi }V_{\varphi }^{\ast
} $ acts on $[\pi _{\varphi }(M_{\varphi })V_{\varphi }\mathcal{H}]$
identically.

\item Let $B\subseteq A$, $B^{\ast }=B.\ $If $V_{\varphi }V_{\varphi }^{\ast
}$ acts on $[\pi _{\varphi }(B)V_{\varphi }\mathcal{H}]$ identically, then $%
B\subseteq M_{\varphi }$.
\end{enumerate}
\end{theorem}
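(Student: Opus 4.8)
The plan is to reduce every assertion to the two intertwining relations
\[
\pi_\varphi(a)V_\varphi = V_\varphi\varphi(a) \quad\text{and}\quad V_\varphi^*\pi_\varphi(a) = \varphi(a)V_\varphi^*,
\]
used together with the Stinespring identity $V_\varphi^*\pi_\varphi(a)V_\varphi = \varphi(a)$ and the minimality $\mathcal{H}_\varphi = [\pi_\varphi(A)V_\varphi\mathcal{H}]$. First I would observe that parts $(1)$ and $(2)$ are the same statement: writing $D = \mathrm{id}_{\mathcal{H}_\varphi} - V_\varphi V_\varphi^*$ and using $V_\varphi^*\pi_\varphi(a)V_\varphi = \varphi(a)$, a one-line expansion gives $V_\varphi^*\pi_\varphi(a)D = V_\varphi^*\pi_\varphi(a) - \varphi(a)V_\varphi^*$ and $D\pi_\varphi(a)V_\varphi = \pi_\varphi(a)V_\varphi - V_\varphi\varphi(a)$, so the vanishing conditions of $(1)$ are literally the intertwining relations of $(2)$.

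Next I would show these relations characterize $M_\varphi$. For the forward direction, assuming $a\in M_\varphi$, I would prove $\pi_\varphi(a)V_\varphi = V_\varphi\varphi(a)$ by testing against the total set $\{\pi_\varphi(b)V_\varphi\eta : b\in A,\ \eta\in\mathcal{H}\}$: one computes $\langle\pi_\varphi(a)V_\varphi\xi,\pi_\varphi(b)V_\varphi\eta\rangle = \langle\xi,\varphi(a^*b)\eta\rangle$ and $\langle V_\varphi\varphi(a)\xi,\pi_\varphi(b)V_\varphi\eta\rangle = \langle\xi,\varphi(a^*)\varphi(b)\eta\rangle$, and these coincide because $a^*\in M_\varphi$ forces $\varphi(a^*b) = \varphi(a^*)\varphi(b)$. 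Applying this to $a^*$ and taking adjoints yields the second relation. For the converse I would simply substitute the intertwining relations into $\varphi(ab) = V_\varphi^*\pi_\varphi(a)\pi_\varphi(b)V_\varphi$ and $\varphi(ba) = V_\varphi^*\pi_\varphi(b)\pi_\varphi(a)V_\varphi$, recovering $\varphi(ab) = \varphi(a)\varphi(b)$ and $\varphi(ba) = \varphi(b)\varphi(a)$, hence $a\in M_\varphi$.

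For $(3)$, the commutant relations are immediate from $(2)$: for $a\in M_\varphi$ both $\pi_\varphi(a)V_\varphi V_\varphi^*$ and $V_\varphi V_\varphi^*\pi_\varphi(a)$ equal $V_\varphi\varphi(a)V_\varphi^*$, while both $\varphi(a)V_\varphi^*V_\varphi$ and $V_\varphi^*V_\varphi\varphi(a)$ equal $V_\varphi^*\pi_\varphi(a)V_\varphi$. The two ``acts identically'' claims are the heart of the matter. I would verify that on the spanning vectors $\varphi(a)\xi$ (resp.\ $\pi_\varphi(a)V_\varphi\xi$), with $a\in M_\varphi$, the operator $V_\varphi^*V_\varphi$ (resp.\ $V_\varphi V_\varphi^*$) has the same inner products against the whole subspace as the identity, the computation again hinging on $a^*\in M_\varphi$ and minimality. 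The main obstacle is upgrading ``the inner products against a spanning set agree'' to ``equal to the identity on the closed subspace'': here I would use the commutant relations just proved to see that the subspace is invariant under the operator, so that $(T-\mathrm{id})$ maps the subspace into itself, and then invoke the elementary fact that $\langle(T-\mathrm{id})\zeta,\zeta'\rangle = 0$ for all $\zeta,\zeta'$ in the subspace forces $(T-\mathrm{id})\zeta = 0$. This is also the place to be careful that in the general (possibly non-unital) completely positive setting $V_\varphi V_\varphi^*$ and $V_\varphi^*V_\varphi$ are merely positive operators rather than projections, so the statements are genuinely about their restriction to the designated subspaces.

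Finally, for $(4)$, the hypothesis that $V_\varphi V_\varphi^*$ acts identically on $[\pi_\varphi(B)V_\varphi\mathcal{H}]$ gives, for each $a\in B$, the identity $V_\varphi V_\varphi^*\pi_\varphi(a)V_\varphi = \pi_\varphi(a)V_\varphi$; since the left-hand side equals $V_\varphi\varphi(a)$, this is exactly $\pi_\varphi(a)V_\varphi = V_\varphi\varphi(a)$. Using $B = B^*$, I would apply the same to $a^*$ and take adjoints to obtain $V_\varphi^*\pi_\varphi(a) = \varphi(a)V_\varphi^*$, whence part $(2)$ gives $a\in M_\varphi$ and therefore $B\subseteq M_\varphi$.
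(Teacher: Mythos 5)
Your proof is correct; the one thing to note up front is that the paper itself offers no proof of this theorem --- it is stated as a known result with the citation to Paulsen's book and Gheondea's paper (\cite{P,G}, i.e.\ Theorem 3.18 of \cite{P} and Theorem 2.1 of \cite{G}) --- so there is no in-paper argument to compare against, and your write-up supplies the standard argument correctly. The reduction of $(1)$ to $(2)$ via $V_{\varphi }^{\ast }\pi _{\varphi }(a)V_{\varphi }=\varphi (a)$ is exactly right; the forward direction of $(2)$ correctly combines minimality with the fact that $M_{\varphi }$ is $\ast $-closed (so that $\varphi (a^{\ast }b)=\varphi (a^{\ast })\varphi (b)$, which follows from $\varphi (b^{\ast }a)=\varphi (b^{\ast })\varphi (a)$ by taking adjoints); the converse is the stated substitution; and $(4)$ follows from $(2)$ as you say. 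The only place you work harder than necessary is the ``acts identically'' portion of $(3)$: the inner-product-plus-invariance upgrade is not needed, because the intertwining relations already give pointwise operator identities on the generating vectors, namely $V_{\varphi }^{\ast }V_{\varphi }\varphi (a)=V_{\varphi }^{\ast }\bigl(V_{\varphi }\varphi (a)\bigr)=V_{\varphi }^{\ast }\pi _{\varphi }(a)V_{\varphi }=\varphi (a)$ and $V_{\varphi }V_{\varphi }^{\ast }\pi _{\varphi }(a)V_{\varphi }=V_{\varphi }\bigl(V_{\varphi }^{\ast }\pi _{\varphi }(a)V_{\varphi }\bigr)=V_{\varphi }\varphi (a)=\pi _{\varphi }(a)V_{\varphi }$ for $a\in M_{\varphi }$, which extend to the closed spans by linearity and continuity. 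That said, your detour is valid (the invariance of the subspace and the polarization-type conclusion both check out), and your caveat that $V_{\varphi }V_{\varphi }^{\ast }$ and $V_{\varphi }^{\ast }V_{\varphi }$ need not be projections in the non-unital setting is well taken and is precisely why the statement is phrased in terms of restrictions to subspaces.
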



\begin{proposition}
\label{mtch} Let $\varphi :A\rightarrow L(\mathcal{H})$ be a completely
positive linear map, and $\left( \pi _{\varphi },\mathcal{H}_{\varphi
},V_{\varphi }\right) $ a minimal Stinespring representation associated with 
$\varphi $. Then:

\begin{enumerate}
\item $a\in T_{\varphi }$ if and only if $a\in M_{\varphi }$ and $\pi
_{\varphi }(a)=V_{\varphi }\varphi (a)V_{\varphi }^{\ast }.$

\item If $A$ has a unit and $\varphi $ is unital then, $a\in T_{\varphi }$
iff $\pi _{\varphi }(a)=V_{\varphi }\varphi (a)V_{\varphi }^{\ast }$.

\item $V_{\varphi }V_{\varphi }^{\ast }$ acts on $[\pi _{\varphi
}(T_{\varphi })\mathcal{H}_{\varphi }]$ identically, and if $B\subseteq A$, $%
B^{\ast }=B$ and $V_{\varphi }V_{\varphi }^{\ast }$ acts on $[\pi _{\varphi
}(B)\mathcal{H}_{\varphi }]$ identically, then $B\subseteq T_{\varphi }$
\end{enumerate}
\end{proposition}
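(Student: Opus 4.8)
The plan is to reduce every assertion to the single operator identity $\pi_\varphi(a)=V_\varphi\varphi(a)V_\varphi^\ast$, exploiting the Stinespring factorization $\varphi(\cdot)=V_\varphi^\ast\pi_\varphi(\cdot)V_\varphi$, the minimality relation $\mathcal{H}_\varphi=[\pi_\varphi(A)V_\varphi\mathcal{H}]$, the characterization $T_\varphi=\{a\in M_\varphi:\varphi(bac)=\varphi(b)\varphi(a)\varphi(c)\text{ for all }b,c\in A\}$ recorded after the definition of $T_\varphi$, and Theorem \ref{mtch0}. For the forward implication of (1), I would take $a\in T_\varphi$ and show $\pi_\varphi(a)$ and $V_\varphi\varphi(a)V_\varphi^\ast$ agree by testing them against the total set $\{\pi_\varphi(b)V_\varphi\xi:b\in A,\xi\in\mathcal{H}\}$. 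The Stinespring identity gives $\langle\pi_\varphi(a)\pi_\varphi(b)V_\varphi\xi,\pi_\varphi(c)V_\varphi\eta\rangle=\langle\varphi(c^\ast ab)\xi,\eta\rangle$ and $\langle V_\varphi\varphi(a)V_\varphi^\ast\pi_\varphi(b)V_\varphi\xi,\pi_\varphi(c)V_\varphi\eta\rangle=\langle\varphi(c^\ast)\varphi(a)\varphi(b)\xi,\eta\rangle$, and these coincide precisely because the defining triple-product identity for $T_\varphi$ yields $\varphi(c^\ast ab)=\varphi(c^\ast)\varphi(a)\varphi(b)$. For the converse, assuming $a\in M_\varphi$ and $\pi_\varphi(a)=V_\varphi\varphi(a)V_\varphi^\ast$, I would substitute into $\varphi(bac)=V_\varphi^\ast\pi_\varphi(b)\pi_\varphi(a)\pi_\varphi(c)V_\varphi$ and collapse the inner $V_\varphi^\ast\pi_\varphi(\cdot)V_\varphi$ blocks to get $\varphi(b)\varphi(a)\varphi(c)$; combined with $a\in M_\varphi$ this places $a$ in $T_\varphi$.

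Part (2) is then immediate. The forward direction is a special case of (1). For the converse, the relation $\pi_\varphi(a)=V_\varphi\varphi(a)V_\varphi^\ast$ again gives $\varphi(bac)=\varphi(b)\varphi(a)\varphi(c)$ for all $b,c$; since $\varphi$ unital forces $\varphi(1_A)=\mathrm{id}_{\mathcal{H}}$, setting $c=1_A$ and then $b=1_A$ shows $a\in M_\varphi$, after which (1) applies. This explains why the extra hypothesis $a\in M_\varphi$ appearing in (1) becomes automatic in the unital setting.

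For (3), the forward inclusion proceeds by taking $a\in T_\varphi$ and $\zeta\in\mathcal{H}_\varphi$ and using (1) to write $\pi_\varphi(a)\zeta=V_\varphi w$ with $w=\varphi(a)V_\varphi^\ast\zeta$. Since $a\in M_\varphi$ we have $w\in[\varphi(M_\varphi)\mathcal{H}]$, on which $V_\varphi^\ast V_\varphi$ acts identically by Theorem \ref{mtch0}(3), so $V_\varphi V_\varphi^\ast\pi_\varphi(a)\zeta=V_\varphi V_\varphi^\ast V_\varphi w=V_\varphi w=\pi_\varphi(a)\zeta$, and passing to the closed linear span gives the first claim. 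For the second assertion, suppose $B=B^\ast$ and $V_\varphi V_\varphi^\ast$ is the identity on $[\pi_\varphi(B)\mathcal{H}_\varphi]$. Because $V_\varphi\mathcal{H}\subseteq\mathcal{H}_\varphi$, this subspace contains $[\pi_\varphi(B)V_\varphi\mathcal{H}]$, so Theorem \ref{mtch0}(4) already yields $B\subseteq M_\varphi$. To upgrade to $T_\varphi$, I would fix $a\in B$; the hypothesis gives $V_\varphi V_\varphi^\ast\pi_\varphi(a)=\pi_\varphi(a)$, and applying it to $a^\ast\in B$ and taking adjoints gives $\pi_\varphi(a)V_\varphi V_\varphi^\ast=\pi_\varphi(a)$, whence $\pi_\varphi(a)=V_\varphi V_\varphi^\ast\pi_\varphi(a)V_\varphi V_\varphi^\ast=V_\varphi(V_\varphi^\ast\pi_\varphi(a)V_\varphi)V_\varphi^\ast=V_\varphi\varphi(a)V_\varphi^\ast$; by (1) this puts $a$ in $T_\varphi$.

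The main obstacle is the non-unital case in (3): since $V_\varphi$ need not be an isometry, $V_\varphi V_\varphi^\ast$ is not a projection onto the closure of the range of $V_\varphi$, so one cannot conclude for free that it acts identically on vectors of the form $V_\varphi(\cdot)$. The careful point is to route the argument through the fact that $V_\varphi^\ast V_\varphi$ acts identically on $[\varphi(M_\varphi)\mathcal{H}]$ (Theorem \ref{mtch0}(3)), and, in the second assertion, to use the $\ast$-invariance of $B$ to obtain the two-sided relation $\pi_\varphi(a)=V_\varphi V_\varphi^\ast\pi_\varphi(a)V_\varphi V_\varphi^\ast$ before collapsing the middle block to $\varphi(a)$. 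Once this bookkeeping is in place, every step is a direct computation with the Stinespring data.
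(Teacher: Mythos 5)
Your proposal is correct and follows essentially the same route as the paper: both reduce everything to the single identity $\pi _{\varphi }(a)=V_{\varphi }\varphi (a)V_{\varphi }^{\ast }$ via the minimality relation $[\pi _{\varphi }(A)V_{\varphi }\mathcal{H}]=\mathcal{H}_{\varphi }$ (the paper writes this as the operator identity $\varphi (bac)-\varphi (b)\varphi (a)\varphi (c)=V_{\varphi }^{\ast }\pi _{\varphi }(b)\left( \pi _{\varphi }(a)-V_{\varphi }\varphi (a)V_{\varphi }^{\ast }\right) \pi _{\varphi }(c)V_{\varphi }$, which is your inner-product computation in disguise), and then handle (2) and (3) by the same adjoint/$\ast $-invariance bookkeeping. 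The only cosmetic differences are that in (3) you invoke Theorem \ref{mtch0}(4) and the identity action of $V_{\varphi }^{\ast }V_{\varphi }$ on $[\varphi (M_{\varphi })\mathcal{H}]$, where the paper re-derives the membership $B\subseteq M_{\varphi }$ from the intertwining relations of Theorem \ref{mtch0}(2); both are valid.
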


\begin{proof}
For $a,b,c\in A$, we have 
\begin{eqnarray*}
\varphi (bac)-\varphi (b)\varphi (a)\varphi (c) &=&V_{\varphi }^{\ast }\pi
_{\varphi }(b)\pi _{\varphi }(a)\pi _{\varphi }(c)V_{\varphi }-V_{\varphi
}^{\ast }\pi _{\varphi }(b)V_{\varphi }\varphi (a)V_{\varphi }^{\ast }\pi
_{\varphi }(c)V_{\varphi } \\
&=&V_{\varphi }^{\ast }\pi _{\varphi }(b)\left( \pi _{\varphi
}(a)-V_{\varphi }\varphi (a)V_{\varphi }^{\ast }\right) \pi _{\varphi
}(c)V_{\varphi }.
\end{eqnarray*}%
From the above relations and taking into account that $\left[ \pi _{\varphi
}(A)V_{\varphi }\mathcal{H}\right] =\mathcal{H}_{\varphi }$, we deduce that $%
\varphi (bac)=\varphi (b)\varphi (a)\varphi (c)$ for all $b,c\in A$ if and
only if $\pi _{\varphi }(a)=V_{\varphi }\varphi (a)V_{\varphi }^{\ast }$.

$\left( 1\right) $ According to the definition of $T_{\varphi },$ $a\in
T_{\varphi }$ if and only if $a\in M_{\varphi }$ and $\varphi (bac)=\varphi
(b)\varphi (a)\varphi (c)$ for all $b,c\in A$. Therefore, $a\in T_{\varphi }$
if and only if $a\in M_{\varphi }$ and $\pi _{\varphi }(a)=V_{\varphi
}\varphi (a)V_{\varphi }^{\ast }.$

$(2)$ If $A$ has a unit and $\varphi $ is unital, then $a\in T_{\varphi }$
if and only if $\varphi (bac)=\varphi (b)\varphi (a)\varphi (c)\ $for all $%
b,c\in A$. Therefore, $a\in T_{\varphi }$ if and only if $\pi _{\varphi
}(a)=V_{\varphi }\varphi (a)V_{\varphi }^{\ast }.$

$\left( 3\right) $ Clearly $V_{\varphi }V_{\varphi }^{\ast }$ acts on $[\pi
_{\varphi }(T_{\varphi })\mathcal{H}_{\varphi }]$ identically, since, by
part $(1)$, 
\begin{equation*}
\pi _{\varphi }(a)=V_{\varphi }\varphi (a)V_{\varphi }^{\ast }=V_{\varphi
}V_{\varphi }^{\ast }\pi _{\varphi }(a)V_{\varphi }V_{\varphi }^{\ast
}=V_{\varphi }V_{\varphi }^{\ast }\pi _{\varphi }(a)
\end{equation*}%
for all $a\in T_{\varphi }.\ $

Now, let $B$ be a self-adjoint subset of $A$ such that $V_{\varphi
}V_{\varphi }^{\ast }$ acts on $[\pi _{\varphi }(B)\mathcal{H}_{\varphi }]$
identically. Then we have $V_{\varphi }V_{\varphi }^{\ast }\pi _{\varphi
}(b)=\pi _{\varphi }(b)$ and $V_{\varphi }V_{\varphi }^{\ast }\pi _{\varphi
}(b^{\ast })=\pi _{\varphi }(b^{\ast })\ $for all $b\in B$. Thus, 
\begin{equation*}
V_{\varphi }V_{\varphi }^{\ast }\pi _{\varphi }(b)=\pi _{\varphi }(b)=\pi
_{\varphi }(b)V_{\varphi }V_{\varphi }^{\ast }
\end{equation*}%
Consequently, $V_{\varphi }\varphi (b)=\pi _{\varphi }(b)V_{\varphi }$ and $%
V_{\varphi }^{\ast }\pi _{\varphi }(b)=\varphi (b)V_{\varphi }^{\ast },$
and, by Theorem \ref{mtch0} $(2),$ $b\in M_{\varphi }$. Moreover, if $b\in B 
$, then 
\begin{equation*}
\pi _{\varphi }(b)=\pi _{\varphi }(b)V_{\varphi }V_{\varphi }^{\ast
}=V_{\varphi }V_{\varphi }^{\ast }\pi _{\varphi }(b)V_{\varphi }V_{\varphi
}^{\ast }=V_{\varphi }\varphi (b)V_{\varphi }^{\ast }
\end{equation*}%
and, by part $(1)$, $b\in T_{\varphi }$. Therefore, $B\subseteq T_{\varphi } 
$.
\end{proof}


\begin{theorem}
\label{st} Let $\varphi :A\rightarrow L(\mathcal{H})$ be a completely
positive linear map and $(\pi ,\mathcal{H}_{\varphi },V_{\varphi })$ a
minimal Stirnespring dilation triple of $\varphi $. Then for every Hilbert $%
A $-module $X$ and every $\pi $-representation $\Pi :X\rightarrow L(\mathcal{%
H}_{\varphi },\mathcal{K})$, the following statements are equivalents:

\begin{enumerate}
\item $x\in X_{\varphi };$

\item $\Pi \left( x\right) \left( \text{id}_{\mathcal{H}_{\varphi
}}-V_{\varphi }V_{\varphi }^{\ast }\right) =0$.
\end{enumerate}
\end{theorem}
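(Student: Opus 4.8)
The plan is to reduce the asserted equivalence to the characterization of $X_{\varphi }$ already obtained, by manufacturing a concrete $\varphi $-map from the given $\pi $-representation $\Pi $. First I would set $\Phi (x)=\Pi (x)V_{\varphi }$ for $x\in X$; this lands in $L(\mathcal{H},\mathcal{K})$ since $V_{\varphi }:\mathcal{H}\rightarrow \mathcal{H}_{\varphi }$ and $\Pi (x):\mathcal{H}_{\varphi }\rightarrow \mathcal{K}$. Using that $\Pi $ is a $\pi $-map and that $\varphi =V_{\varphi }^{\ast }\pi (\cdot )V_{\varphi }$, one checks immediately that
\begin{equation*}
\Phi (x)^{\ast }\Phi (y)=V_{\varphi }^{\ast }\Pi (x)^{\ast }\Pi (y)V_{\varphi }=V_{\varphi }^{\ast }\pi (\langle x,y\rangle )V_{\varphi }=\varphi (\langle x,y\rangle ),
\end{equation*}
so $\Phi $ is a $\varphi $-map. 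By Theorem \ref{theorem37} the module domain is independent of the chosen $\varphi $-map, hence $X_{\varphi }=X_{\Phi }$, and by the definition of $X_{\Phi }$ the condition $x\in X_{\varphi }$ is equivalent to $\Phi (xb)=\Phi (x)\varphi (b)$ for all $b\in A$.

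Next I would translate this defining relation through the explicit formula for $\Phi $. Since $\pi $ is a $\ast $-homomorphism, the $\pi $-representation $\Pi $ is automatically a $\pi $-module map, so $\Phi (xb)=\Pi (xb)V_{\varphi }=\Pi (x)\pi (b)V_{\varphi }$, while $\Phi (x)\varphi (b)=\Pi (x)V_{\varphi }V_{\varphi }^{\ast }\pi (b)V_{\varphi }$. Subtracting, the membership $x\in X_{\varphi }$ is equivalent to
\begin{equation*}
\Pi (x)\left( \text{id}_{\mathcal{H}_{\varphi }}-V_{\varphi }V_{\varphi }^{\ast }\right) \pi (b)V_{\varphi }=0\quad \text{for all }b\in A.
\end{equation*}

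Finally I would invoke minimality of the Stinespring triple. Because $\left[ \pi (A)V_{\varphi }\mathcal{H}\right] =\mathcal{H}_{\varphi }$, the vectors $\pi (b)V_{\varphi }\xi $ with $b\in A$, $\xi \in \mathcal{H}$ are dense in $\mathcal{H}_{\varphi }$; the displayed identity says precisely that the bounded operator $\Pi (x)(\text{id}_{\mathcal{H}_{\varphi }}-V_{\varphi }V_{\varphi }^{\ast })$ annihilates this dense set, and hence vanishes on all of $\mathcal{H}_{\varphi }$ by continuity, giving statement $(2)$. The converse is trivial, since $\Pi (x)(\text{id}_{\mathcal{H}_{\varphi }}-V_{\varphi }V_{\varphi }^{\ast })=0$ makes the displayed equation hold for every $b$, whence $x\in X_{\varphi }$. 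The only step requiring genuine care is this last density passage, where the minimality condition $[\pi (A)V_{\varphi }\mathcal{H}]=\mathcal{H}_{\varphi }$ is indispensable; everything else is a direct computation once the auxiliary $\varphi $-map $\Phi =\Pi (\cdot )V_{\varphi }$ has been introduced and its domain identified with $X_{\varphi }$ via Theorem \ref{theorem37}.
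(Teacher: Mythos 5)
Your argument is correct, and it takes a genuinely different route from the paper's. The paper proves the equivalence by passing through the inner product: it invokes Proposition \ref{characterization} to replace $x\in X_{\varphi }$ by $\left\langle x,x\right\rangle \in T_{\varphi }$, then uses the dilation-theoretic description of $T_{\varphi }$ from Proposition \ref{mtch} together with Theorem \ref{mtch0} to show that $\pi (\langle x,x\rangle )$ is absorbed by $V_{\varphi }V_{\varphi }^{\ast }$ on both sides, and finally expands $\bigl(\Pi (x)(\text{id}_{\mathcal{H}_{\varphi }}-V_{\varphi }V_{\varphi }^{\ast })\bigr)^{\ast }\bigl(\Pi (x)(\text{id}_{\mathcal{H}_{\varphi }}-V_{\varphi }V_{\varphi }^{\ast })\bigr)=0$; the converse likewise runs through positivity of $\pi (\langle x,x\rangle )$ and Proposition \ref{mtch}. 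You instead work directly with the defining relation of the module domain: you compress $\Pi $ to the concrete $\varphi $-map $\Phi =\Pi (\cdot )V_{\varphi }$, identify $X_{\Phi }$ with $X_{\varphi }$ via Theorem \ref{theorem37}, rewrite $\Phi (xb)-\Phi (x)\varphi (b)$ as $\Pi (x)(\text{id}_{\mathcal{H}_{\varphi }}-V_{\varphi }V_{\varphi }^{\ast })\pi (b)V_{\varphi }$ (using that a $\pi $-representation is automatically a $\pi $-module map, a fact the paper records in the introduction), and close with the totality of $\{\pi (b)V_{\varphi }\xi \}$ in $\mathcal{H}_{\varphi }$. Your route is shorter and makes the role of minimality completely transparent, since the density of $[\pi (A)V_{\varphi }\mathcal{H}]$ is exactly what converts the tested identity into the operator identity; the paper's route instead recycles the $T_{\varphi }$ machinery it has already built and never needs an auxiliary $\varphi $-map. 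There is no circularity in your use of Theorem \ref{theorem37} (it rests only on Proposition \ref{characterization} and Lemma \ref{idmdlem}), so both proofs are complete and rest on the same foundations.
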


\begin{proof}
Let $x\in X_{\varphi }$. By Proposition \ref{characterization}, $\langle
x,x\rangle \in T_{\varphi }$. Since $\langle x,x\rangle \in T_{\varphi
}\subseteq M_{\varphi }$, Proposition \ref{mtch} $(1,3)$ and Theorem \ref%
{mtch0} $\left( 3\right) $ implies that 
\begin{equation*}
\pi (\langle x,x\rangle )=V_{\varphi }\varphi (\langle x,x\rangle
)V_{\varphi }^{\ast }=V_{\varphi }V_{\varphi }^{\ast }\pi (\langle
x,x\rangle )V_{\varphi }V_{\varphi }^{\ast }=\pi (\langle x,x\rangle
)V_{\varphi }V_{\varphi }^{\ast }=V_{\varphi }V_{\varphi }^{\ast }\pi
(\langle x,x\rangle ).
\end{equation*}%
Therefore%
\begin{eqnarray*}
&&(\Pi (x)(\text{id}_{\mathcal{H}_{\varphi }}-V_{\varphi }V_{\varphi }^{\ast
}))^{\ast }(\Pi (x)(\text{id}_{\mathcal{H}_{\varphi }}-V_{\varphi
}V_{\varphi }^{\ast })) \\
&=&\Pi (x)^{\ast }\Pi (x)-\Pi (x)^{\ast }\Pi (x)V_{\varphi }V_{\varphi
}^{\ast }-V_{\varphi }V_{\varphi }^{\ast }\Pi (x)^{\ast }\Pi (x)+V_{\varphi
}V_{\varphi }^{\ast }\Pi (x)^{\ast }\Pi (x)V_{\varphi }V_{\varphi }^{\ast }
\\
&=&\pi \left( \left\langle x,x\right\rangle \right) -\pi \left( \left\langle
x,x\right\rangle \right) V_{\varphi }V_{\varphi }^{\ast }-V_{\varphi
}V_{\varphi }^{\ast }\pi \left( \left\langle x,x\right\rangle \right)
+V_{\varphi }V_{\varphi }^{\ast }\pi \left( \left\langle x,x\right\rangle
\right) V_{\varphi }V_{\varphi }^{\ast } \\
&=&0
\end{eqnarray*}

Thus, $\Pi (x)(\text{id}_{{\mathcal{H}_{\varphi }}}-V_{\varphi }V_{\varphi
}^{\ast })=0$.

Conversely, assume $x\in X$ and $\Pi (x)(\text{id}_{\mathcal{H}_{\varphi
}}-V_{\varphi }V_{\varphi }^{\ast })=0$. Then 
\begin{equation*}
0=\Pi (x)^{\ast }\Pi (x)(\text{id}_{\mathcal{H}_{\varphi }}-V_{\varphi
}V_{\varphi }^{\ast })=\pi \left( \left\langle x,x\right\rangle \right)
\left( \text{id}_{\mathcal{H}_{\varphi }}-V_{\varphi }V_{\varphi }^{\ast
}\right) .
\end{equation*}%
Therefore, $\pi (\langle x,x\rangle )=\pi (\langle x,x\rangle )V_{\varphi
}V_{\varphi }^{\ast }$. From this relation and taking into account that $\pi
(\langle x,x\rangle )$ is positive, we deduce that 
\begin{equation*}
V_{\varphi }V_{\varphi }^{\ast }\pi (\langle x,x\rangle )=\pi (\langle
x,x\rangle )V_{\varphi }V_{\varphi }^{\ast }=\pi (\langle x,x\rangle ).
\end{equation*}%
Therefore by Proposition \ref{mtch} part (3), $\langle x,x\rangle \in
T_{\varphi }$, which is equivalent to $x\in X_{\varphi }$ ( see Proposition %
\ref{characterization} $(2)$ and Definition \ref{ternary} ).
\end{proof}

It has been shown that every $\varphi $-map on a Hilbert $A$-module $X$ can
be dilated to every $\pi $-representation on $X$, where $\pi $ is a dilation
of the completely positive map $\varphi $ \cite[Theorem 2.3]{ABN2}. We
recall this fact and some related results as a remark and we use it to prove
another characterization of the ternary domain $X_{\varphi }$, in terms of
the dilation of the $\varphi $-map, similar to Proposition \ref{mtch}.

\begin{remark}
\label{ABN1} Let $X$ be a Hilbert $A$-module, $\varphi :A\rightarrow L(%
\mathcal{H})$ a completely positive linear map and $(\pi ,\mathcal{H}%
_\varphi,V_\varphi)$ be a minimal Stinespring dilation triple of $\varphi $.

\begin{enumerate}
\item There exists a triple $((\Phi _{\varphi },\mathcal{K}_{\varphi }),(\Pi
_{\pi },\mathcal{K}_{\pi }),W_{\varphi })$ consisting of two Hilbert spaces $%
\mathcal{K}_{\varphi }$ and $\mathcal{K}_{\pi },$ a $\varphi $-map $\Phi
_{\varphi }:X\rightarrow L(\mathcal{H},\mathcal{K}_{\varphi })$, a $\pi $%
-map $\Pi _{\pi }:X\rightarrow L(\mathcal{H}_{\varphi },\mathcal{K}_{\pi })$
and a unitary operator $W_{\varphi }:\mathcal{K}_{\varphi }\rightarrow 
\mathcal{K}_{\pi },$ such that 
\begin{equation*}
\Phi _{\varphi }(\cdot )=W_{\varphi }^{\ast }\Pi _{\pi }(\cdot )V_{\varphi }.
\end{equation*}%
Moreover, $[\Phi _{\varphi }(X)\mathcal{H}]=\mathcal{K}_{\varphi }$ and $%
[\Pi _{\pi }(X)\mathcal{H}_{\varphi }]=\mathcal{K}_{\pi }$ \emph{\cite[Lemma
2.2]{ABN2}}.

\item If $\Phi :X\rightarrow L(\mathcal{H},\mathcal{K})$ is a $\varphi $%
-map, there exists a unique isometry $S_{\Phi }:\mathcal{K}_{\varphi
}\rightarrow \mathcal{K}$ such that $S_{\Phi }\Phi _{\varphi
}(\cdot)=\Phi(\cdot) $. Moreover, if $[\Phi (X)\mathcal{H}]=\mathcal{K}$,
then $S_{\Phi }$ is a unitary operator \emph{\cite[Theorem 2.3]{ABN2}}.

\item If $\Phi :X\rightarrow L(\mathcal{H},\mathcal{K})$ is a $\varphi $%
-map, $\Pi :X\rightarrow L(\mathcal{K}_{\varphi },\widetilde{\mathcal{K}})$
is a $\pi $-map, the operator $W=S_{\Pi }W_{\varphi }S_{\Phi }^{\ast }$ is a
partial isometry and $\Phi (x)=W^{\ast }\Pi (x)V_{\varphi }$ for all $x\in X$%
.
\end{enumerate}
\end{remark}


\begin{corollary}
\label{st10} Let $\varphi :A\rightarrow L(\mathcal{H})$ be a completely
positive linear map and $(\pi,\mathcal{H}_\varphi,V_\varphi)$ a minimal
Stirnespring dilation triple of $\varphi$. Then for every Hilbert $A$-module 
$X$, the following statements are equivalent:

\begin{enumerate}
\item $x\in X_{\varphi };$

\item $\pi(\langle y,x\rangle) \left( \text{id}_{\mathcal{H}%
_\varphi}-V_\varphi V_\varphi^{\ast }\right) =0$ for all $y\in X$.
\end{enumerate}
\end{corollary}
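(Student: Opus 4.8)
The plan is to deduce this corollary from the previously established Theorem \ref{st} by exhibiting a suitable $\pi$-representation $\Pi$ and relating the vanishing condition (2) of the corollary to condition (2) of that theorem. The key observation is that Corollary \ref{st10} differs from Theorem \ref{st} only in that the operator $\Pi(x)$ is replaced by the collection of operators $\pi(\langle y,x\rangle)$ indexed over $y\in X$, and that the ``$\pi$-representation $\Pi$'' has been removed from the hypothesis. So the task is really to show that condition (2) here is equivalent to condition (2) of Theorem \ref{st} for an appropriate choice of $\Pi$.

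First I would invoke Remark \ref{ABN1}(1) to produce, for the given Hilbert $A$-module $X$, a canonical $\pi$-representation $\Pi_\pi:X\rightarrow L(\mathcal{H}_\varphi,\mathcal{K}_\pi)$ satisfying $[\Pi_\pi(X)\mathcal{H}_\varphi]=\mathcal{K}_\pi$. This $\Pi_\pi$ is a $\pi$-map, so $\Pi_\pi(y)^\ast\Pi_\pi(x)=\pi(\langle y,x\rangle)$ for all $y,x\in X$. The plan is then to apply Theorem \ref{st} with $\Pi=\Pi_\pi$, which tells us that $x\in X_\varphi$ if and only if $\Pi_\pi(x)(\mathrm{id}_{\mathcal{H}_\varphi}-V_\varphi V_\varphi^\ast)=0$.

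The remaining step is to verify that $\Pi_\pi(x)(\mathrm{id}_{\mathcal{H}_\varphi}-V_\varphi V_\varphi^\ast)=0$ is equivalent to $\pi(\langle y,x\rangle)(\mathrm{id}_{\mathcal{H}_\varphi}-V_\varphi V_\varphi^\ast)=0$ for all $y\in X$. The forward direction is immediate: if $\Pi_\pi(x)$ kills the range of $\mathrm{id}_{\mathcal{H}_\varphi}-V_\varphi V_\varphi^\ast$, then multiplying on the left by $\Pi_\pi(y)^\ast$ gives $\pi(\langle y,x\rangle)(\mathrm{id}_{\mathcal{H}_\varphi}-V_\varphi V_\varphi^\ast)=0$. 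For the converse, the idea is that the vectors $\{\Pi_\pi(y)\eta:y\in X,\eta\in\mathcal{H}_\varphi\}$ have dense span equal to $\mathcal{K}_\pi$, so that the adjoint operators $\Pi_\pi(y)^\ast$ ``separate'' enough to recover the vanishing of $\Pi_\pi(x)$ itself. Concretely, if $\pi(\langle y,x\rangle)(\mathrm{id}-V_\varphi V_\varphi^\ast)=0$ for all $y$, then for any $\xi\in\mathcal{H}_\varphi$ and any $y\in X,\eta\in\mathcal{H}_\varphi$ one computes
\begin{equation*}
\langle \Pi_\pi(x)(\mathrm{id}-V_\varphi V_\varphi^\ast)\xi,\Pi_\pi(y)\eta\rangle=\langle \pi(\langle y,x\rangle)(\mathrm{id}-V_\varphi V_\varphi^\ast)\xi,\eta\rangle=0,
\end{equation*}
and since such $\Pi_\pi(y)\eta$ span a dense subspace of $\mathcal{K}_\pi$, the vector $\Pi_\pi(x)(\mathrm{id}-V_\varphi V_\varphi^\ast)\xi$ must vanish.

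**The main obstacle** I anticipate is a subtle point of logic rather than of computation: Theorem \ref{st} is quantified over \emph{every} $\pi$-representation $\Pi$, so strictly speaking the equivalence ``$x\in X_\varphi\iff\Pi(x)(\mathrm{id}-V_\varphi V_\varphi^\ast)=0$'' holds for each such $\Pi$ individually. I must be careful to fix the single, nondegenerate representation $\Pi_\pi$ from Remark \ref{ABN1}(1) and carry out the density argument with respect to \emph{that} representation only; the nondegeneracy $[\Pi_\pi(X)\mathcal{H}_\varphi]=\mathcal{K}_\pi$ is exactly what makes the converse direction work and is the crux of the whole argument. The positivity/self-adjointness manipulations that appeared in Theorem \ref{st} are not needed again here, since we are leveraging that theorem as a black box.
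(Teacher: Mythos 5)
Your proposal is correct and is exactly the argument the paper intends but leaves implicit: the corollary is stated without proof immediately after Remark \ref{ABN1}, which supplies the nondegenerate $\pi$-representation $\Pi_\pi$ you invoke, and Theorem \ref{st} then does the rest, with your density computation using $[\Pi_\pi(X)\mathcal{H}_\varphi]=\mathcal{K}_\pi$ correctly bridging the vanishing of $\Pi_\pi(x)(\mathrm{id}_{\mathcal{H}_\varphi}-V_\varphi V_\varphi^{\ast})$ and of all the $\pi(\langle y,x\rangle)(\mathrm{id}_{\mathcal{H}_\varphi}-V_\varphi V_\varphi^{\ast})$.
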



\begin{corollary}
\label{Phich} Let $\varphi :A\rightarrow L(\mathcal{H})$ be a completely
positive linear map, $(\pi ,\mathcal{H}_{\varphi },V_{\varphi })$ a minimal
Stirnespring dilation triple of $\varphi $. Then for every Hilbert $A$%
-module $X$, every $\pi $-representation $\Pi :X\rightarrow L(\mathcal{H}%
_{\varphi },\mathcal{K})$ and every $\varphi $-map $\Phi :X\rightarrow L(%
\mathcal{H},\widetilde{\mathcal{K}})$, there is a partial isometry $W$ such
that $\Phi (\cdot )=W^{\ast }\Pi (\cdot )V_{\varphi }$ and the following
statements are equivalent:

\begin{enumerate}
\item $x\in X_{\varphi };$

\item $\Pi (x)($id$_{\mathcal{H}_{\varphi }}-V_{\varphi }V_{\varphi }^{\ast
})=0;$

\item $W^{\ast }\Pi (x)=\Phi (x)V_{\varphi }^{\ast };$

\item $\Pi (x)=W\Phi (x)V_{\varphi }^{\ast }$.
\end{enumerate}
\end{corollary}

\begin{proof}
Let $\Pi :X\rightarrow L(\mathcal{H}_{\varphi },\mathcal{K})$ be a $\pi $%
-representation, $\Phi :X\rightarrow L(\mathcal{H},\widetilde{\mathcal{K}})$
a $\varphi $-map and $((\Phi _{\varphi },\mathcal{K}_{\varphi }),(\Pi _{\pi
},\mathcal{K}_{\pi }),W_{\varphi })$ be the triple as in Remark \ref{ABN1}.
As mentioned in Remark \ref{ABN1}, there exist the isometries $S_{\Phi }:%
\mathcal{K}_{\varphi }\rightarrow \widetilde{\mathcal{K}}$ and $S_{\Pi }:%
\mathcal{K}_{\pi }\rightarrow \mathcal{K}$ such that $S_{\Phi }\Phi
_{\varphi }(\cdot )=\Phi (\cdot )$ and $S_{\Pi }\Pi _{\pi }(\cdot )=\Pi
(\cdot )$. Moreover, $W=S_{\Pi }W_{\varphi }S_{\Phi }^{\ast }$ is a partial
isometry and $\Phi (\cdot )=W^{\ast }\Phi (\cdot )V_{\varphi }$.

$(1)\Longleftrightarrow (2)$ This equivalence follows from Theorem \ref{st},
since $\Pi $ is a $\pi $-representation and $(\pi ,\mathcal{H}_{\varphi
},V_{\varphi })$ is a minimal Stinespring dilation triple for $\varphi $.

$(1)\Longrightarrow (3)$ Let $x\in X_{\varphi }$. By Theorem \ref{st}, $\Pi
_{\pi }(x)($id$_{\mathcal{H}_{\varphi }}-V_{\varphi }V_{\varphi }^{\ast
})=0, $ whence we deduce that $W_{\varphi }^{\ast }\Pi _{\pi }(x)=\Phi
_{\varphi }(x)V_{\varphi }^{\ast }$. Then%
\begin{eqnarray*}
W^{\ast }\Pi (x) &=&S_{\Phi }W_{\varphi }^{\ast }S_{\Pi }^{\ast }\Pi
(x)=S_{\Phi }W_{\varphi }^{\ast }S_{\Pi }^{\ast }S_{\Pi }\Pi _{\pi }(x) \\
&=&S_{\Phi }W_{\varphi }^{\ast }\Pi _{\pi }(x)=S_{\Phi }\Phi _{\varphi
}(x)V_{\varphi }^{\ast }=\Phi (x)V_{\varphi }^{\ast }.
\end{eqnarray*}

$(3)\Longrightarrow (2)$ Assume $x\in X$ and $W^{\ast }\Pi (x)=\Phi
(x)V_{\varphi }^{\ast }$. Therefore, $S_{\Phi }W_{\varphi }^{\ast }S_{\Pi
}^{\ast }\Pi (x)=\Phi (x)V_{\varphi }^{\ast }.$ From this relation and
taking into account that $S_{\Phi }$ is an isometry and $W_{\varphi }^{\ast
} $ is a unitary we obtain 
\begin{eqnarray*}
S_{\Pi }^{\ast }\Pi (x) &=&W_{\varphi }S_{\Phi }^{\ast }\Phi (x)V_{\varphi
}^{\ast }=W_{\varphi }S_{\Phi }^{\ast }S_{\Phi }\Phi _{\varphi
}(x)V_{\varphi }^{\ast }=W_{\varphi }\Phi _{\varphi }(x)V_{\varphi }^{\ast }
\\
&=&W_{\varphi }W_{\varphi }^{\ast }\Pi _{\pi }(x)V_{\varphi }V_{\varphi
}^{\ast }=\Pi _{\pi }(x)V_{\varphi }V_{\varphi }^{\ast }=S_{\Pi }^{\ast }\Pi
(x)V_{\varphi }V_{\varphi }^{\ast }.
\end{eqnarray*}%
Therefore,%
\begin{equation*}
S_{\Pi }^{\ast }\Pi (x)(\text{id}_{\mathcal{H}_{\varphi }}-V_{\varphi
}V_{\varphi }^{\ast })=0.
\end{equation*}%
Since $S_{\Pi }$ is an isometry and $S_{\Pi }S_{\Pi }^{\ast }$ is the
orthogonal projection on $[\Pi (X)\mathcal{H}_{\varphi }],$ $\Pi (x)=S_{\Pi
}S_{\Pi }^{\ast }\Pi (x),$ and thus 
\begin{equation*}
\Pi (x)(\text{id}_{\mathcal{H}_{\varphi }}-V_{\varphi }V_{\varphi }^{\ast
})=S_{\Pi }S_{\Pi }^{\ast }\Pi (x)(\text{id}_{\mathcal{H}_{\varphi
}}-V_{\varphi }V_{\varphi }^{\ast })=0.
\end{equation*}

$(3)\Longrightarrow (4)$ Assume $x\in X$ and $W^{\ast }\Pi (x)=\Phi
(x)V_{\varphi }^{\ast }.$ Since $WW^{\ast }=S_{\Pi }S_{\Pi }^{\ast }$ and $%
S_{\Pi }S_{\Pi }^{\ast }$ is the orthogonal projection on $[\Pi (X)\mathcal{H%
}_{\varphi }]$, we have 
\begin{equation*}
\begin{split}
\Pi (x)&=S_{\Pi }S_{\Pi }^{\ast }\Pi (x)=WW^{\ast }\Pi (x)=
WS_{\Phi}W_{\varphi}^*S_{\Pi}^*\Pi(x) \\
&=WS_{\Phi}W_{\varphi}^*\Pi_{\pi}(x)= WS_{\Phi}\Phi_{\varphi}(x)V_\varphi=
W\Phi (x)V_{\varphi }^{\ast }.
\end{split}%
\end{equation*}

$(4)\Longrightarrow (3)$ Assume $x\in X$ and $\Pi (x)=W\Phi (x)V_{\varphi
}^{\ast }$. Since $S_{\Pi }$ is an isometry, $W^{\ast }W=S_{\Phi }W_{\varphi
}^{\ast }W_{\varphi }S_{\Phi }^{\ast }.$ On the other hand, $W_{\varphi
}^{\ast }W_{\varphi }=\text{id}_{\mathcal{K}_{\varphi }} $ and then $W^{\ast
}W=S_{\Phi }S_{\Phi }^{\ast },$ which is the orthogonal projection on $[\Phi
(X)\mathcal{H}]$. Thus 
\begin{equation*}
W^{\ast }\Pi (x)=W^{\ast }W\Phi (x)V_{\varphi }^{\ast }=S_{\Phi }S_{\Phi
}^{\ast }\Phi (x)V_{\varphi }^{\ast }=\Phi (x)V_{\varphi }^{\ast }.
\end{equation*}
\end{proof}

As an immediate consequence, we have the following characterization. 

\begin{corollary}
\label{stch} Let $\varphi :A\rightarrow L(\mathcal{H})$ be a completely
positive linear map, $\Phi :X\rightarrow L(\mathcal{H},\mathcal{K})$ a $%
\varphi $-map and $\left( \left( \Pi _{\Phi },\pi _{\varphi }\right)
,(W_{\Phi },V_{\varphi }),(\mathcal{H}_{\varphi },\mathcal{K}_{\Phi
})\right) $ a minimal Stinespring representation associated with $(\varphi
,\Phi )$. Then the following statements are equivalent:

\begin{enumerate}
\item $x\in X_{\varphi };$

\item $\Pi _{\Phi }\left( x\right) \left( \text{id}_{\mathcal{H}_{\varphi
}}-V_{\varphi }V_{\varphi }^{\ast }\right) =0;$

\item $W_{\Phi }^{\ast }\Pi _{\Phi }(x)=\Phi (x)V_{\varphi }^{\ast };$

\item $\Pi _{\Phi }(x)=W_{\Phi }\Phi (x)V_{\varphi }^{\ast }.$
\end{enumerate}
\end{corollary}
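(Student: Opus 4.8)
The plan is to recognize Corollary \ref{stch} as the specialization of Corollary \ref{Phich} to the particular $\pi_{\varphi}$-representation $\Pi_{\Phi}$ and partial isometry $W_{\Phi}$ supplied by the minimal Stinespring representation associated with $(\varphi,\Phi)$, so that almost all of the work is already in place. First I would unpack the hypotheses: by the description of such a representation recalled in the Preliminaries, the triple $\left( \pi_{\varphi},\mathcal{H}_{\varphi},V_{\varphi}\right)$ is a minimal Stinespring representation of $\varphi$, the map $\Pi_{\Phi}:X\rightarrow L(\mathcal{H}_{\varphi},\mathcal{K}_{\Phi})$ is a $\pi_{\varphi}$-representation, $W_{\Phi}$ is a coisometry (hence in particular a partial isometry with $W_{\Phi}W_{\Phi}^{\ast}=\text{id}_{\mathcal{K}_{\Phi}}$), and $\Phi(\cdot)=W_{\Phi}^{\ast}\Pi_{\Phi}(\cdot)V_{\varphi}$. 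These are exactly the data needed to invoke the earlier results, with $\Pi=\Pi_{\Phi}$ and $W=W_{\Phi}$.

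For the equivalence $(1)\Leftrightarrow(2)$ I would appeal directly to Theorem \ref{st}: since $\Pi_{\Phi}$ is a $\pi_{\varphi}$-representation and $\left( \pi_{\varphi},\mathcal{H}_{\varphi},V_{\varphi}\right)$ is a minimal Stinespring dilation triple of $\varphi$, that theorem gives $x\in X_{\varphi}$ if and only if $\Pi_{\Phi}(x)(\text{id}_{\mathcal{H}_{\varphi}}-V_{\varphi}V_{\varphi}^{\ast})=0$, with no reference to $W_{\Phi}$ whatsoever.

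The remaining equivalences $(2)\Leftrightarrow(3)\Leftrightarrow(4)$ I would obtain by the same short manipulations used in the proof of Corollary \ref{Phich}, now carried out with the concrete operator $W_{\Phi}$. From $(2)$, i.e.\ $\Pi_{\Phi}(x)=\Pi_{\Phi}(x)V_{\varphi}V_{\varphi}^{\ast}$, left-multiplication by $W_{\Phi}^{\ast}$ together with $\Phi(\cdot)=W_{\Phi}^{\ast}\Pi_{\Phi}(\cdot)V_{\varphi}$ yields $W_{\Phi}^{\ast}\Pi_{\Phi}(x)=\Phi(x)V_{\varphi}^{\ast}$, which is $(3)$; left-multiplying $(3)$ by $W_{\Phi}$ and using $W_{\Phi}W_{\Phi}^{\ast}=\text{id}_{\mathcal{K}_{\Phi}}$ gives $\Pi_{\Phi}(x)=W_{\Phi}\Phi(x)V_{\varphi}^{\ast}$, which is $(4)$; and substituting $\Phi(x)=W_{\Phi}^{\ast}\Pi_{\Phi}(x)V_{\varphi}$ back into $(4)$ and again invoking $W_{\Phi}W_{\Phi}^{\ast}=\text{id}_{\mathcal{K}_{\Phi}}$ returns $\Pi_{\Phi}(x)=\Pi_{\Phi}(x)V_{\varphi}V_{\varphi}^{\ast}$, i.e.\ $(2)$.

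I do not anticipate a genuine obstacle: each implication is a one-line computation. The only points requiring care are the correct use of the coisometry relation $W_{\Phi}W_{\Phi}^{\ast}=\text{id}_{\mathcal{K}_{\Phi}}$ (it is $W_{\Phi}W_{\Phi}^{\ast}$, not $W_{\Phi}^{\ast}W_{\Phi}$, that is the identity) and applying the factorization $\Phi(\cdot)=W_{\Phi}^{\ast}\Pi_{\Phi}(\cdot)V_{\varphi}$ on the correct side. Should one prefer to avoid re-deriving $(2)\Leftrightarrow(3)\Leftrightarrow(4)$, it suffices to remark that $(\Pi_{\Phi},W_{\Phi})$ meets all the hypotheses of Corollary \ref{Phich} and to cite that corollary verbatim, which is precisely the sense in which the statement is an \emph{immediate consequence}.
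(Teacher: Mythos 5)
Your proposal is correct and matches the paper's approach: the paper states this corollary with no separate proof, merely as an immediate consequence of Corollary \ref{Phich}, and your specialization to $\Pi =\Pi _{\Phi }$, $W=W_{\Phi }$ together with the one-line verifications of $(2)\Leftrightarrow (3)\Leftrightarrow (4)$ using $W_{\Phi }W_{\Phi }^{\ast }=\mathrm{id}_{\mathcal{K}_{\Phi }}$ and $\Phi (\cdot )=W_{\Phi }^{\ast }\Pi _{\Phi }(\cdot )V_{\varphi }$ is exactly the intended argument. If anything, your self-contained computation is slightly safer than citing Corollary \ref{Phich} verbatim, since the latter would require identifying its constructed partial isometry $W=S_{\Pi }W_{\varphi }S_{\Phi }^{\ast }$ with the given coisometry $W_{\Phi }$.
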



\section{Multiplicative domain of an induced completely positive map on
linking $C^{\ast }$-algebra}

In this section, we show that a $\varphi $-map $\Phi $ on a Hilbert $A$%
-module $X$ induces a unique completely positive linear map on $\mathcal{L}%
(X)$, the linking $C^{\ast }$-algebra of $X$. Furthermore, we show that
every completely positive linear map $\varphi :A\rightarrow L(\mathcal{H})$
induces a unique (in a some sense) completely positive linear map on $%
\mathcal{L}(X)$. Moreover, we determine the multiplicative domain of the
induced completely positive linear maps on $\mathcal{L}(X)$ in terms of the
multiplicative domain $M_{\varphi }$ and the ternary domain $X_{\varphi }$.
Finally, we show that all the completely positive linear maps on $\mathcal{L}%
(X)$ which are induced by a completely positive linear map $\varphi
:A\rightarrow L(\mathcal{H})$ have the same multiplicative domain. 

\begin{proposition}
\label{pro111} Let $X$ be a Hilbert $A$-module, $\varphi :A\rightarrow L(%
\mathcal{H})$ a completely positive linear map and $\Phi :X\rightarrow L(%
\mathcal{H},\mathcal{K})$ a $\varphi $-map. Then there is a unique $\ast $%
-representation $\phi _{\Phi ,\varphi }:K(X)\rightarrow L(\mathcal{K})$ such
that $\widetilde{\varphi }_{\Phi }=%
\begin{bmatrix}
\phi _{\Phi ,\varphi } & \Phi \\ 
\Phi ^{\ast } & \varphi%
\end{bmatrix}%
:\mathcal{L}(X)\rightarrow L(\mathcal{K}\oplus \mathcal{H})$ is a completely
positive linear map and for every minimal Stinespring representation
associated to the $\varphi $-map $\Phi $ such as $\left( \left( \Pi ,\pi
\right) ,\left( W,V\right) ,\left( \mathcal{K}^{\prime },\mathcal{H}^{\prime
}\right) \right) $, there is a $\ast $ -representation $\Gamma
:K(X)\rightarrow L(\mathcal{K}^{\prime })$ such that $\Gamma (\theta
_{x,y})=\Pi (x)\Pi (y)^{\ast }$ for all $x,y\in X$ and 
\begin{equation*}
\widetilde{\varphi }_{\Phi }=%
\begin{bmatrix}
\phi _{\Phi ,\varphi } & \Phi \\ 
\Phi ^{\ast } & \varphi%
\end{bmatrix}%
=%
\begin{bmatrix}
W^{\ast } & 0 \\ 
0 & V^{\ast }%
\end{bmatrix}%
\begin{bmatrix}
\Gamma & \Pi \\ 
\Pi ^{\ast } & \pi%
\end{bmatrix}%
\begin{bmatrix}
W & 0 \\ 
0 & V%
\end{bmatrix}%
.
\end{equation*}%
Moreover, if $\varphi $\ is contractive, then $\widetilde{\varphi }_{\Phi }$%
\ is contractive.
\end{proposition}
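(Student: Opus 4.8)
The plan is to build all of $\widetilde{\varphi}_{\Phi}$ from a single minimal Stinespring representation $((\Pi,\pi),(W,V),(\mathcal{K}',\mathcal{H}'))$ of the pair $(\varphi,\Phi)$, recalled in Section~2: thus $\Phi=W^{\ast}\Pi(\cdot)V$, $\varphi=V^{\ast}\pi(\cdot)V$, the triple $(\pi,\mathcal{H}',V)$ is a minimal Stinespring triple for $\varphi$, $\Pi$ is a $\pi$-representation of $X$ with $[\Pi(X)V\mathcal{H}]=\mathcal{K}'$, and $W$ is a coisometry. First I would promote the pair $(\Pi,\pi)$ to a genuine $\ast$-representation $\rho=\bigl[\begin{smallmatrix}\Gamma&\Pi\\ \Pi^{\ast}&\pi\end{smallmatrix}\bigr]$ of the linking algebra $\mathcal{L}(X)$ on $\mathcal{K}'\oplus\mathcal{H}'$; then I would recover $\widetilde{\varphi}_{\Phi}$ as the compression $\widehat{V}^{\ast}\rho(\cdot)\widehat{V}$ by $\widehat{V}=\bigl[\begin{smallmatrix}W&0\\ 0&V\end{smallmatrix}\bigr]$, from which complete positivity is immediate.

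To construct $\rho$ I would use the interior tensor product. Since $\mathcal{H}'=[\pi(A)V\mathcal{H}]$ and $\Pi(xa)=\Pi(x)\pi(a)$, one has $[\Pi(X)\mathcal{H}']=\mathcal{K}'$, and the identity $\langle x\otimes\xi,y\otimes\eta\rangle=\langle\xi,\pi(\langle x,y\rangle)\eta\rangle=\langle\Pi(x)\xi,\Pi(y)\eta\rangle$ shows that $x\otimes\xi\mapsto\Pi(x)\xi$ extends to a unitary $X\otimes_{\pi}\mathcal{H}'\to\mathcal{K}'$. Transporting the canonical action $T\mapsto T\otimes\mathrm{id}$ of $K(X)$ on $X\otimes_{\pi}\mathcal{H}'$ through this unitary yields a $\ast$-representation $\Gamma:K(X)\to L(\mathcal{K}')$ with $\Gamma(\theta_{x,y})=\Pi(x)\Pi(y)^{\ast}$; running the same construction for $X\oplus A$ in place of $X$ assembles $\Gamma,\Pi,\Pi^{\ast},\pi$ into the $\ast$-representation $\rho$ of $\mathcal{L}(X)=K(X\oplus A)$. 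I would then define $\phi_{\Phi,\varphi}=W^{\ast}\Gamma(\cdot)W$, which is again a $\ast$-representation because $WW^{\ast}=\mathrm{id}_{\mathcal{K}'}$ gives $\phi_{\Phi,\varphi}(S)\phi_{\Phi,\varphi}(T)=W^{\ast}\Gamma(S)\Gamma(T)W=\phi_{\Phi,\varphi}(ST)$. A block computation then verifies $\widetilde{\varphi}_{\Phi}=\widehat{V}^{\ast}\rho(\cdot)\widehat{V}$ (its corners being $W^{\ast}\Gamma W=\phi_{\Phi,\varphi}$, $W^{\ast}\Pi V=\Phi$, $V^{\ast}\pi V=\varphi$), and being a compression of a $\ast$-representation it is completely positive.

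For uniqueness I would first observe that the factorization condition makes $\phi_{\Phi,\varphi}$ canonical: any two minimal Stinespring representations of $(\varphi,\Phi)$ are unitarily equivalent, and under such an equivalence $\Gamma$ and $W$ transform so that $W^{\ast}\Gamma(\cdot)W$ is unchanged; hence $\phi_{\Phi,\varphi}$ is well defined and any $\ast$-representation satisfying the displayed factorization equals it. To give this intrinsic content I would show that complete positivity of $\widetilde{\varphi}=\bigl[\begin{smallmatrix}\phi&\Phi\\ \Phi^{\ast}&\varphi\end{smallmatrix}\bigr]$, with $\phi$ a $\ast$-representation, already forces the covariance relation $\Phi(x\langle y,z\rangle)=\phi(\theta_{x,y})\Phi(z)$: the self-adjoint corner $b_{x}=\bigl[\begin{smallmatrix}\theta_{x,x}&0\\ 0&0\end{smallmatrix}\bigr]$ satisfies $\widetilde{\varphi}(b_{x}^{2})=\widetilde{\varphi}(b_{x})^{2}$ since $\phi$ is multiplicative on the corner, so the standard one-sided multiplicative-domain property of completely positive maps gives $\widetilde{\varphi}(b_{x}c)=\widetilde{\varphi}(b_{x})\widetilde{\varphi}(c)$ for all $c$; taking $c=\bigl[\begin{smallmatrix}0&z\\ 0&0\end{smallmatrix}\bigr]$ yields $\Phi(x\langle x,z\rangle)=\phi(\theta_{x,x})\Phi(z)$, and polarizing in $x$ (using $\theta_{x,y}=\tfrac14\sum_{k=0}^{3}i^{k}\theta_{x+i^{k}y,\,x+i^{k}y}$) gives the general relation.

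The delicate point, and the main obstacle, is precisely how far this pins $\phi$ down: the covariance relation only determines $\phi$ on $[\Phi(X)\mathcal{H}]$, and on its orthogonal complement one may add an arbitrary $\ast$-representation of $K(X)$ without spoiling complete positivity of $\widetilde{\varphi}$ (already visible for $X=A=\mathbb{C}$, where $\phi$ is just a projection on $\mathcal{K}$ and only its restriction to $[\Phi(X)\mathcal{H}]$ is constrained). I would therefore make the non-degeneracy $[\Phi(X)\mathcal{H}]=\mathcal{K}$ explicit (or else read uniqueness relative to the factorization, where it is automatic as above), which forces $\phi=\phi_{\Phi,\varphi}$ on all of $\mathcal{K}$. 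Finally, contractivity when $\varphi$ is contractive is routine: $\|V\|=\|V_{\varphi}\|\le 1$ and $\|W\|=1$ as $W$ is a coisometry, so $\|\widehat{V}\|=1$ and $\|\widetilde{\varphi}_{\Phi}(b)\|=\|\widehat{V}^{\ast}\rho(b)\widehat{V}\|\le\|\widehat{V}\|^{2}\|b\|=\|b\|$.
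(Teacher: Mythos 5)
Your proof is correct and follows essentially the same route as the paper: compress the $\ast$-representation $\bigl[\begin{smallmatrix}\Gamma&\Pi\\ \Pi^{\ast}&\pi\end{smallmatrix}\bigr]$ of $\mathcal{L}(X)$ by $\mathrm{diag}(W,V)$, and derive uniqueness from the unitary equivalence of minimal Stinespring representations of the pair $(\varphi,\Phi)$. The only differences are cosmetic: you build $\Gamma$ explicitly via the interior tensor product where the paper cites Aramba\v{s}i\'{c}, and your closing remark that uniqueness must be read relative to the stated factorization (since complete positivity alone leaves $\phi$ undetermined off $[\Phi(X)\mathcal{H}]$) is a valid clarification of the paper's intent rather than a divergence from it.
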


\begin{proof}
Let $\left( \left( \Pi _{\Phi },\pi _{\varphi }\right) ,(W_{\Phi
},V_{\varphi }),(\mathcal{H}_{\varphi },\mathcal{K}_{\Phi })\right) $ be a
minimal Stinespring representation for $\varphi $-map $\Phi \ $(see \cite[%
Theorem 2.1]{BRS}).

Then 
\begin{equation*}
\Pi _{\Phi ,\varphi }=%
\begin{bmatrix}
\Gamma _{\Phi ,\varphi } & \Pi _{\Phi } \\ 
\Pi _{\Phi }^{\ast } & \pi _{\varphi }%
\end{bmatrix}%
\end{equation*}%
where $\Gamma _{\Phi ,\varphi }:K(X)\rightarrow L(\mathcal{K}_{\Phi })$ is a 
$\ast $ -representation of $K(X)$ on $\mathcal{K}_{\Phi }$ such that $\Gamma
_{\Phi ,\varphi }(\theta _{x,y})=\Pi _{\Phi }(x)\Pi _{\Phi }(y)^{\ast }$ for
all $x,y\in X$, is a $\ast $-representation of $\mathcal{L}(X)$ on $\mathcal{%
K}_{\Phi }\oplus \mathcal{H}_{\varphi }$ (see, for example, \cite[p.13]{Ar}%
). We consider the linear map $\widetilde{\varphi }_{\Phi }:\mathcal{L}%
(X)\rightarrow L(\mathcal{K}\oplus \mathcal{H})$ given by 
\begin{equation*}
\widetilde{\varphi }_{\Phi }=%
\begin{bmatrix}
W_{\Phi }^{\ast } & 0 \\ 
0 & V_{\varphi }^{\ast }%
\end{bmatrix}%
\begin{bmatrix}
\Gamma _{\Phi ,\varphi } & \Pi _{\Phi } \\ 
\Pi _{\Phi }^{\ast } & \pi _{\varphi }%
\end{bmatrix}%
\begin{bmatrix}
W_{\Phi } & 0 \\ 
0 & V_{\varphi }%
\end{bmatrix}%
.
\end{equation*}%
Clearly, $\widetilde{\varphi }_{\Phi }$ is a completely positive linear map
from $\mathcal{L}(X)$ to $L(\mathcal{K}\oplus \mathcal{H})$, and if $\varphi 
$\ is contractive, then $\widetilde{\varphi }_{\Phi }$\ is contractive. It
is easy to check that 
\begin{equation*}
\widetilde{\varphi }_{\Phi }=%
\begin{bmatrix}
\phi _{\Phi ,\varphi } & \Phi \\ 
\Phi ^{\ast } & \varphi%
\end{bmatrix}%
\end{equation*}%
where $\phi _{\Phi ,\varphi }\left( \cdot \right) =W_{\Phi }^{\ast }\Gamma
_{\Phi ,\varphi }\left( \cdot \right) W_{\Phi }$. Since $W_{\Phi }$ is a
coisometry, $\phi _{\Phi ,\varphi }:K(X)\rightarrow L(\mathcal{K})$ is a $%
C^{\ast }$-morphism.

Let $\left( \left( \Pi ^{\prime },\pi ^{\prime }\right) ,\left( W^{\prime
},V^{\prime }\right) ,\left( \mathcal{K}^{\prime },\mathcal{H}^{\prime
}\right) \right) $ be another minimal Stinespring representation associated
to the $\varphi $-map $\Phi $, and $\varphi _{\Phi }^{\prime }=%
\begin{bmatrix}
\phi _{\Phi ,\varphi }^{\prime } & \Phi \\ 
\Phi ^{\ast } & \varphi%
\end{bmatrix}%
:\mathcal{L}(X)\rightarrow L(\mathcal{K}\oplus \mathcal{H})$ be the
completely positive linear map associated to this representation.

By \cite[Theorem 2.4]{BRS}, there exist two unitary operators $U_{1}:%
\mathcal{H}_{\varphi }\rightarrow \mathcal{H}^{\prime }$ and $U_{2}:\mathcal{%
K}_{\Phi }\rightarrow \mathcal{K}^{\prime }$ such that $U_{1}V_{\varphi
}=V^{\prime }$, $U_{1}\pi _{\varphi }(\cdot )=\pi ^{\prime }(\cdot )U_{1}$, $%
U_{2}W_{\Phi }=W^{\prime }$ and $U_{2}\Pi _{\Phi }(\cdot )=\Pi ^{\prime
}(\cdot )U_{1}$. Then, 
\begin{eqnarray*}
\phi _{\Phi ,\varphi }^{\prime }(\theta _{x,y}) &=&W^{\prime \ast }\Pi
^{\prime }(x)\Pi ^{\prime }(y)^{\ast }W^{\prime }=W_{\Phi }^{\ast
}U_{2}^{\ast }U_{2}\Pi _{\Phi }(x)U_{1}^{\ast }U_{1}\Pi _{\Phi }(y)^{\ast
}U_{2}^{\ast }U_{2}W_{\Phi } \\
&=&W_{\Phi }^{\ast }\Pi _{\Phi }(x)\Pi _{\Phi }(y)^{\ast }W_{\Phi }=\phi
_{\Phi ,\varphi }(\theta _{x,y})
\end{eqnarray*}%
for all $x,y\in X$. Therefore, $\phi _{\Phi ,\varphi }^{\prime }=\phi _{\Phi
,\varphi }$ and then $\widetilde{\varphi }_{\Phi }=\varphi _{\Phi }^{\prime
}.$
\end{proof}

The following lemma will be used in the main theorem of this section. Also,
it expose some important features of ternary domains. Indeed in this lemma,
we show that the ternary domain of a completely positive map on a Hilbert $%
C^{\ast }$-module is a left Hilbert $C^{\ast }$-module over the set of
compact operators on the Hilbert $C^{\ast }$-module, i.e., $X_{\varphi }$ is
a $K(X)$-$M_{\varphi }$-bimodule. Moreover, we provide another
characterization of the ternary domain in term of a specific representation
of compact operators. 

\begin{lemma}
\label{lem42} Let $X$ be a Hilbert $A$-module, $\varphi :A\rightarrow L(%
\mathcal{H})$ a completely positive linear map and $\Phi :X\rightarrow L(%
\mathcal{H},\mathcal{K})$ a $\varphi $-map. Then we have:

\begin{enumerate}
\item For every $T\in K(X)$ and $z\in X$, $\Phi(T(z))=\phi_{\Phi,\varphi}(T)%
\Phi(z)$;

\item $K(X)X_\varphi\subseteq X_\varphi$; \newline
Moreover, if $\varphi$ is a contraction, then

\item $x\in X_\varphi$ if and only if $\phi_{\Phi,\varphi}(\theta_{x,x})=%
\Phi(x)\Phi(x)^*$;

\item $x\in X_\varphi$ if and only if $\phi_{\Phi,\varphi}(\theta_{x,y})=%
\Phi(x)\Phi(y)^*$ for all $y\in X$;

\item $\phi_{\Phi,\varphi}(\theta_{xa,y})=\Phi(xa)\Phi(y)^*=\Phi(x)%
\varphi(a)\Phi(y)^*$ for all $x,y\in X$ and $a\in T_\varphi$.
\end{enumerate}
\end{lemma}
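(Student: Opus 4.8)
The plan is to anchor everything on part (1) and the minimal Stinespring representation $((\Pi_\Phi,\pi_\varphi),(W_\Phi,V_\varphi),(\mathcal{H}_\varphi,\mathcal{K}_\Phi))$ of the $\varphi$-map $\Phi$ used in Proposition \ref{pro111} to build $\phi_{\Phi,\varphi}(\cdot)=W_\Phi^*\Gamma_{\Phi,\varphi}(\cdot)W_\Phi$, where $\Gamma_{\Phi,\varphi}(\theta_{x,y})=\Pi_\Phi(x)\Pi_\Phi(y)^*$ and $\Phi(\cdot)=W_\Phi^*\Pi_\Phi(\cdot)V_\varphi$. For part (1) I would first check the identity on the generators $T=\theta_{x,y}$: since $W_\Phi$ is a coisometry ($W_\Phi W_\Phi^*=\mathrm{id}$) and $\Pi_\Phi$ is a $\pi_\varphi$-representation, hence a ternary map, one computes $\phi_{\Phi,\varphi}(\theta_{x,y})\Phi(z)=W_\Phi^*\Pi_\Phi(x)\Pi_\Phi(y)^*\Pi_\Phi(z)V_\varphi=W_\Phi^*\Pi_\Phi(x\langle y,z\rangle)V_\varphi=\Phi(\theta_{x,y}(z))$. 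As both $T\mapsto\Phi(T(z))$ and $T\mapsto\phi_{\Phi,\varphi}(T)\Phi(z)$ are linear and norm-continuous and $K(X)=\overline{\mathrm{span}}\{\theta_{x,y}\}$, the identity extends to all $T\in K(X)$. Part (2) then drops out of (1): every $T\in K(X)$ is adjointable, hence $A$-linear, so for $x\in X_\varphi=X_\Phi$ and $b\in A$ one gets $\Phi(T(x)b)=\Phi(T(xb))=\phi_{\Phi,\varphi}(T)\Phi(xb)=\phi_{\Phi,\varphi}(T)\Phi(x)\varphi(b)=\Phi(T(x))\varphi(b)$, so $T(x)\in X_\Phi=X_\varphi$.

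Next I would isolate the one structural fact that makes the forward implications of (3) and (4) work for a possibly degenerate $\Phi$: letting $P$ be the orthogonal projection onto $[\Phi(X)\mathcal{H}]$, both $\phi_{\Phi,\varphi}(\theta_{x,y})$ and $\Phi(x)\Phi(y)^*$ lie in the corner $PL(\mathcal{K})P$. Indeed, by minimality $[\Phi(X)\mathcal{H}]=\mathrm{range}(W_\Phi^*)=(\ker W_\Phi)^\perp$, so $\phi_{\Phi,\varphi}(\theta_{x,y})=W_\Phi^*\Pi_\Phi(x)\Pi_\Phi(y)^*W_\Phi$ vanishes on $\ker W_\Phi$ and has range in $\mathrm{range}(W_\Phi^*)$; likewise $\Phi(y)^*$ kills $[\Phi(X)\mathcal{H}]^\perp$ and $\mathrm{range}\,\Phi(x)\subseteq[\Phi(X)\mathcal{H}]$. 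Hence two such operators that agree on $[\Phi(X)\mathcal{H}]$ agree everywhere.

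For the forward direction of (4) (with (3) its $y=x$ case), take $x\in X_\varphi$. By Proposition \ref{characterization} (3) together with the Corollary following it, which gives $T_\Phi=X_\Phi=\{x:\Phi(x\langle y,z\rangle)=\Phi(x)\Phi(y)^*\Phi(z)\text{ for all }y,z\}$, we have $\Phi(x\langle y,z\rangle)=\Phi(x)\Phi(y)^*\Phi(z)$ for all $y,z$; combining with part (1) yields $\phi_{\Phi,\varphi}(\theta_{x,y})\Phi(z)=\Phi(x)\Phi(y)^*\Phi(z)$ for every $z$, i.e. agreement on $[\Phi(X)\mathcal{H}]$, whence equality by the corner remark. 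The converse of (4) is immediate from that of (3) on setting $y=x$, so the crux is the converse of (3), where I expect the real work and where contractivity of $\varphi$ is indispensable. Assuming $\phi_{\Phi,\varphi}(\theta_{x,x})=\Phi(x)\Phi(x)^*$, part (1) gives $\Phi(x\langle x,z\rangle)=\Phi(x)\Phi(x)^*\Phi(z)$ for all $z$; putting $z=x$ and using $\Phi(x)^*\Phi(x)=\varphi(\langle x,x\rangle)$ gives $\Phi(x\langle x,x\rangle)=\Phi(x)\varphi(\langle x,x\rangle)$, so Proposition \ref{characterization} (4) forces $\langle x,x\rangle\in M_\varphi$. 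I would then run the computation $\varphi(c\langle x,x\rangle^2 b)=\Phi(xc^*)^*\Phi(x\langle x,xb\rangle)=\Phi(xc^*)^*\Phi(x)\Phi(x)^*\Phi(xb)=\varphi(c\langle x,x\rangle)\varphi(\langle x,x\rangle b)$ exactly as in the proof of Proposition \ref{characterization} (2), and collapse it via $\langle x,x\rangle\in M_\varphi$ to $\varphi(c)\varphi(\langle x,x\rangle^2)\varphi(b)$; hence $\langle x,x\rangle^2\in T_\varphi$, so $\langle x,x\rangle\in T_\varphi$ since $T_\varphi$ is a $C^*$-subalgebra, and therefore $x\in X_\varphi$ by Proposition \ref{characterization} (1).

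Finally, part (5) follows from the earlier parts and Proposition \ref{tttheorem}: for $a\in T_\varphi$ and $x\in X$ we have $xa\in XT_\varphi=X_\varphi$, so the (contractivity-free) forward direction of (4) applied to $xa$ gives $\phi_{\Phi,\varphi}(\theta_{xa,y})=\Phi(xa)\Phi(y)^*$ for all $y$, while Proposition \ref{tttheorem} (2) gives $\Phi(xa)=\Phi(x)\varphi(a)$, yielding the second equality. The main obstacle is thus concentrated in the converse of (3): extracting $\langle x,x\rangle\in T_\varphi$ from a single operator identity, which I expect to split into first landing in $M_\varphi$ (using $\|\varphi\|\le 1$ through Proposition \ref{characterization} (4)) and then upgrading to $T_\varphi$ via the $\langle x,x\rangle^2$ computation; the auxiliary corner argument is the only other delicate point, needed solely because $\Phi$ is not assumed non-degenerate.
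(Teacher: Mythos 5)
Your proof is correct, but in two places it takes a genuinely different route from the paper's. For the forward directions of (3) and (4) the paper invokes Corollary \ref{stch}~(3), i.e.\ the dilation identity $W_{\Phi }^{\ast }\Pi _{\Phi }(x)=\Phi (x)V_{\varphi }^{\ast }$ valid for $x\in X_{\varphi }$, which collapses $\phi _{\Phi ,\varphi }(\theta _{x,y})=W_{\Phi }^{\ast }\Pi _{\Phi }(x)\Pi _{\Phi }(y)^{\ast }W_{\Phi }$ to $\Phi (x)\Phi (y)^{\ast }$ in one line; you instead use $X_{\Phi }=T_{\Phi }$ plus part (1) to get agreement of the two operators on $[\Phi (X)\mathcal{H}]$, and then your ``corner'' observation (that both $\phi _{\Phi ,\varphi }(\theta _{x,y})$ and $\Phi (x)\Phi (y)^{\ast }$ live in $PL(\mathcal{K})P$ with $P=W_{\Phi }^{\ast }W_{\Phi }$ the projection onto $[\Phi (X)\mathcal{H}]$) to upgrade this to equality --- a valid argument, and the identification $\mathrm{range}(W_{\Phi }^{\ast })=[\Phi (X)\mathcal{H}]$ you need does follow from minimality $[\Pi _{\Phi }(X)V_{\varphi }\mathcal{H}]=\mathcal{K}_{\Phi }$. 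The bigger divergence is the converse of (3): the paper lifts the single identity $\phi _{\Phi ,\varphi }(\theta _{x,x})=\Phi (x)\Phi (x)^{\ast }$ to the statement $\widetilde{\varphi }_{\Phi }(bb^{\ast })=\widetilde{\varphi }_{\Phi }(b)\widetilde{\varphi }_{\Phi }(b)^{\ast }$ for $b=\left[\begin{smallmatrix}0&x\\0&0\end{smallmatrix}\right]$ in the linking algebra and applies Paulsen's multiplicative-domain theorem to the contractive map $\widetilde{\varphi }_{\Phi }$, reading off $\Phi (xa)=\Phi (x)\varphi (a)$ directly; you stay inside the module, extract $\Phi (x\langle x,x\rangle )=\Phi (x)\varphi (\langle x,x\rangle )$, feed it to Proposition \ref{characterization}~(4) to land $\langle x,x\rangle $ in $M_{\varphi }$, and then rerun the $\langle x,x\rangle ^{2}\in T_{\varphi }$ computation from the proof of Proposition \ref{characterization}~(2). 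Both converses consume contractivity at exactly one point (Paulsen's theorem versus Proposition \ref{characterization}~(4)); the paper's version is shorter given Proposition \ref{pro111}, while yours avoids the linking algebra entirely and makes visible that everything reduces to the membership $\langle x,x\rangle \in T_{\varphi }$. Your treatment of (2) via $A$-linearity of compact operators is also a small simplification over the paper's direct Stinespring computation; parts (1) and (5) coincide with the paper's.
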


\begin{proof}
Let $\left( \left( \Pi _{\Phi },\pi _{\varphi }\right) ,(W_{\Phi
},V_{\varphi }),(\mathcal{H}_{\varphi },\mathcal{K}_{\Phi })\right) $ be a
minimal Stinespring representation for $\varphi $-map $\Phi \ $(see \cite[%
Theorem 2.1]{BRS}).

$(1)$ For every $x,y,z\in X$, we have%
\begin{eqnarray*}
\Phi (\theta _{x,y}(z)) &=&\Phi (x\langle y,z\rangle )=W_{\Phi }^{\ast }\Pi
_{\Phi }(x\langle y,z\rangle )V_{\varphi } \\
&=&W_{\Phi }^{\ast }\Pi _{\Phi }(x)\Pi _{\Phi }(y)^{\ast }\Pi _{\Phi
}(z)V_{\varphi } \\
&=&W_{\Phi }^{\ast }\Pi _{\Phi }(x)\Pi _{\Phi }(y)^{\ast }W_{\Phi }W_{\Phi
}^{\ast }\Pi _{\Phi }(z)V_{\varphi } \\
&=&\phi _{\Phi ,\varphi }(\theta _{x,y})\Phi (z).
\end{eqnarray*}%
Since the linear span generated by $\{\theta _{x,y}:x,y\in X\}$ is a dense
subset of $K(X)$ and the left module action of $K(X)$ on $X$ is continuous, $%
\Phi (T(z))=\phi _{\Phi ,\varphi }(T)\Phi (z)$ for every $T\in K(X)$ and $%
z\in X$.

$(2)$ Let $x,y\in X$ and $z\in X_{\varphi }$, 
\begin{eqnarray*}
\Phi \left( \theta _{x,y}\left( z\right) a\right) &=&W_{\Phi }^{\ast }\Pi
_{\Phi }\left( x\left\langle y,z\right\rangle a\right) V_{\varphi }=W_{\Phi
}^{\ast }\Pi _{\Phi }\left( x\right) \pi _{\varphi }\left( \left\langle
y,za\right\rangle \right) V_{\varphi } \\
&=&W_{\Phi }^{\ast }\Pi _{\varphi }\left( x\right) \Pi _{\Phi }\left(
y\right) ^{\ast }\Pi _{\Phi }\left( za\right) V_{\varphi } \\
&=&W_{\Phi }^{\ast }\Pi _{\Phi }\left( x\right) \Pi _{\Phi }\left( y\right)
^{\ast }W_{\Phi }W_{\Phi }^{\ast }\Pi _{\Phi }\left( za\right) V_{\varphi }
\\
&=&\phi _{\Phi ,\varphi }\left( \theta _{x,,y}\right) \Phi \left( za\right)
=\phi _{\Phi ,\varphi }\left( \theta _{x,,y}\right) \Phi \left( z\right)
\varphi \left( a\right) \\
&=&\Phi \left( \theta _{x,y}\left( z\right) \right) \varphi \left( a\right)
\end{eqnarray*}%
for all $a\in A,$ and so $\theta _{x,y}\left( z\right) \in X_{\varphi }.$%
Since the linear span generated by $\{\theta _{x,y}:x,y\in X\}$ is a dense
subset of $K(X)$, we conclude that $K(X)X_{\varphi }\subseteq X_{\varphi }.$ 
\newline

From now on, we assume that $\varphi$ is a contraction. \newline
$(3)$ Let $x\in X_{\varphi }$. Then, by Corollary \ref{stch} $(3)$, $W_{\Phi
}^{\ast }\Pi _{\Phi }(x)=\Phi (x)V_{\varphi }^{\ast },$ and 
\begin{equation*}
\phi _{\Phi ,\varphi }(\theta _{x,x})=W_{\Phi }^{\ast }\Pi _{\Phi }(x)\Pi
_{\Phi }(x)^{\ast }W_{\Phi }=\Phi (x)V_{\varphi }^{\ast }\Pi _{\Phi
}(x)^{\ast }W_{\Phi }=\Phi (x)\Phi (x)^{\ast }.
\end{equation*}%
Conversely, assume $x\in X$ such that $\phi _{\Phi ,\varphi }(\theta
_{x,x})=\Phi (x)\Phi (x)^{\ast }$. Then%
\begin{eqnarray*}
\widetilde{\varphi }_{\Phi }\left( 
\begin{bmatrix}
0 & x \\ 
0 & 0%
\end{bmatrix}%
\begin{bmatrix}
0 & x \\ 
0 & 0%
\end{bmatrix}%
^{\ast }\right) &=&%
\begin{bmatrix}
\phi _{\Phi ,\varphi }(\theta _{x,x}) & 0 \\ 
0 & 0%
\end{bmatrix}%
=%
\begin{bmatrix}
\Phi (x)\Phi (x)^{\ast } & 0 \\ 
0 & 0%
\end{bmatrix}
\\
&=&\widetilde{\varphi }_{\Phi }\left( 
\begin{bmatrix}
0 & x \\ 
0 & 0%
\end{bmatrix}%
\right) \widetilde{\varphi }_{\Phi }\left( 
\begin{bmatrix}
0 & x \\ 
0 & 0%
\end{bmatrix}%
\right) ^{\ast }
\end{eqnarray*}%
and by \cite[Theorem 3.18 (ii)]{P}, 
\begin{equation*}
\widetilde{\varphi }_{\Phi }\left( 
\begin{bmatrix}
0 & x \\ 
0 & 0%
\end{bmatrix}%
\begin{bmatrix}
0 & 0 \\ 
0 & a%
\end{bmatrix}%
\right) =\widetilde{\varphi }_{\Phi }\left( 
\begin{bmatrix}
0 & x \\ 
0 & 0%
\end{bmatrix}%
\right) \widetilde{\varphi }_{\Phi }\left( 
\begin{bmatrix}
0 & 0 \\ 
0 & a%
\end{bmatrix}%
\right)
\end{equation*}%
for all $a\in A$. Consequently, $\Phi (xa)=\Phi (x)\varphi (a)$ for all $%
a\in A$, and thus $x\in X_{\varphi }$.

$(4)$ If $x\in X$ and $\phi _{\Phi ,\varphi }(\theta _{x,y})=\Phi (x)\Phi
(y)^{\ast }$ for all $y\in X$, then $\phi _{\Phi ,\varphi }(\theta
_{x,x})=\Phi (x)\Phi (x)^{\ast }$, and, by $(3)$, $x\in X_{\varphi }.$

Conversely, assume that $x\in X_{\varphi }$ and $y\in X$. By Corollary \ref%
{stch} $(3)$,%
\begin{equation*}
\phi _{\Phi ,\varphi }(\theta _{x,y})=W_{\Phi }^{\ast }\Pi _{\Phi }(x)\Pi
_{\Phi }(y)^*W_{\Phi }=\Phi (x)V_{\varphi }^{\ast }\Pi _{\Phi }(y)^*W_{\Phi
}=\Phi (x)\Phi (y)^{\ast }.
\end{equation*}

$(5)$ By Proposition \ref{tttheorem}, for every $x\in X$ and $a\in
T_{\varphi }$, $xa\in X_{\varphi }$ and $\Phi (xa)=\Phi (x)\varphi (a)$.
Using $(4),$ we obtain 
\begin{equation*}
\phi _{\Phi ,\varphi }(\theta _{xa,y})=\Phi (xa)\Phi (y)^{\ast }=\Phi
(x)\varphi (a)\Phi (y)^{\ast }.
\end{equation*}
\end{proof}


The above lemma says that, when we consider $X$ as a left Hilbert $K(X)$%
-module then the given $\varphi $-map $\Phi $ is a $\phi _{\Phi ,\varphi }$%
-module map, but it is not a $\phi _{\Phi ,\varphi }$-representation, in
general. In fact, $\Phi $ is a $\phi _{\Phi ,\varphi }$-representation if
and only if $X_{\varphi }=X$. However, $X_{\varphi }$ is a right (left)
Hilbert $C^{\ast } $-module over $M_{\varphi }$ (respectively $K(X)$) such
that $\Phi |_{X_{\varphi }}$, the restriction of $\Phi $ to $X_{\varphi }$,
is both a $\varphi \vert_{M_{\varphi }} $-representation and a $\phi _{\Phi
,\varphi }$-representation. Surprisingly, $X_{\varphi }$ is the largest
Hilbert $C^{\ast }$-module in $X$ with this property. In fact, we say more
as follows. 

\begin{corollary}
Let $X$ be a Hilbert $A$-module, $\varphi :A\rightarrow L(\mathcal{H})$ a
contractive completely positive linear map and $\Phi :X\rightarrow L(%
\mathcal{H},\mathcal{K})$ a $\varphi $-map. Also, assume that $Y\ \
(Y\subseteq X)$ is a Hilbert $C^{\ast }$-module over a $C^{\ast }$%
-subalgebra $B$ of $A$ (which inherits the operations of $X$) such that $%
\Phi $ acts on $Y$ as a ternary map.

\begin{enumerate}
\item If $Y$ is full (as a Hilbert $B$-module), then $B \subseteq M_{\varphi
}$ and $Y = Y_{\varphi \vert_B}$;

\item If $B=A$, then $Y\subseteq X_{\varphi }$;

\item If $[\Phi (Y)\mathcal{H}]=\mathcal{K}$, then $Y\subseteq X_{\varphi }$;

\item $Y\subseteq X_{\varphi }$ if and only if $\left. \Phi \right\vert _{Y}$
is a $\left. \phi _{\Phi ,\varphi }\right\vert _{K(Y)}$-representation.
\end{enumerate}
\end{corollary}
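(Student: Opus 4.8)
The plan is to reduce each part to one of two earlier characterizations: the equivalence $y\in X_{\varphi}\iff\langle y,y\rangle\in T_{\varphi}$ from Proposition \ref{characterization}(1) (refined by Corollary \ref{cor}), and the contractive criterion $x\in X_{\varphi}\iff\phi_{\Phi,\varphi}(\theta_{x,x})=\Phi(x)\Phi(x)^{\ast}$ from Lemma \ref{lem42}(3). The common engine is that the ternary hypothesis on $Y$ forces $\varphi$ to be multiplicative on inner products coming from $Y$: for $y,z,u,v\in Y$ one has $z\langle u,v\rangle\in Y$, so
\[
\varphi(\langle y,z\rangle\langle u,v\rangle)=\Phi(y)^{\ast}\Phi(z\langle u,v\rangle)=\Phi(y)^{\ast}\Phi(z)\Phi(u)^{\ast}\Phi(v)=\varphi(\langle y,z\rangle)\varphi(\langle u,v\rangle),
\]
using the $\varphi$-map identity and the ternary identity $\Phi(z\langle u,v\rangle)=\Phi(z)\Phi(u)^{\ast}\Phi(v)$.

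For part (1), I would first note that $\Phi|_{Y}$ is a $\varphi|_{B}$-map and that $\varphi|_{B}$ is completely positive. By the displayed multiplicativity and fullness of $Y$ over $B$, the closed linear span of $\{\langle y,z\rangle:y,z\in Y\}$ is all of $B$, so by continuity $\varphi(b_{1}b_{2})=\varphi(b_{1})\varphi(b_{2})$ for all $b_{1},b_{2}\in B$; since $\varphi$ is $\ast$-preserving, $\varphi|_{B}$ is a $\ast$-homomorphism. The passage to $B\subseteq M_{\varphi}$ is the first delicate point: multiplicativity of $\varphi|_{B}$ compares only elements inside $B$, whereas $M_{\varphi}$ records interaction with all of $A$. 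This gap is bridged by contractivity, through the intrinsic description $M_{\varphi}=\{a:\varphi(a^{\ast}a)=\varphi(a)^{\ast}\varphi(a),\ \varphi(aa^{\ast})=\varphi(a)\varphi(a)^{\ast}\}$: for $b\in B$ the identities $\varphi(b^{\ast}b)=\varphi(b)^{\ast}\varphi(b)$ and $\varphi(bb^{\ast})=\varphi(b)\varphi(b)^{\ast}$ already follow from $\varphi|_{B}$ being a $\ast$-homomorphism, hence $b\in M_{\varphi}$. Finally, a $\ast$-homomorphism $\psi$ satisfies $T_{\psi}=B$ directly from the definition of the ternary domain, so $\langle y,y\rangle\in T_{\varphi|_{B}}=B$ for every $y\in Y$; Proposition \ref{characterization}(1), applied to the $\varphi|_{B}$-map $\Phi|_{Y}$ on the Hilbert $B$-module $Y$, then gives $Y=Y_{\varphi|_{B}}$ (Definition \ref{ternary}).

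For parts (2) and (3) I would treat the two hypotheses separately, and the reason they cannot be merged is exactly the difference between an ideal of $B$ and an ideal of $A$. In part (2), with $B=A$, set $I=\overline{\mathrm{span}}\{\langle y,z\rangle:y,z\in Y\}$; since $Y$ is an $A$-submodule, $I$ is a closed two-sided ideal of $A$ and $Y$ is full over $I$, so part (1) (with $B=I$) yields $I\subseteq M_{\varphi}$. Now for $a\in I$ and $b,c\in A$ one has $ba\in I\subseteq M_{\varphi}$ (here $B=A$ is used), whence $\varphi(bac)=\varphi(ba)\varphi(c)=\varphi(b)\varphi(a)\varphi(c)$, i.e. $I\subseteq T_{\varphi}$; thus $\langle y,y\rangle\in T_{\varphi}$ and $y\in X_{\varphi}$. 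Part (3) is cleaner and bypasses $M_{\varphi}$ entirely: combining the ternary identity with Lemma \ref{lem42}(1) gives $\Phi(x)\Phi(y)^{\ast}\Phi(z)=\phi_{\Phi,\varphi}(\theta_{x,y})\Phi(z)$ for all $x,y,z\in Y$, so the operator $\Phi(x)\Phi(y)^{\ast}-\phi_{\Phi,\varphi}(\theta_{x,y})$ annihilates $\Phi(Y)\mathcal{H}$; since $[\Phi(Y)\mathcal{H}]=\mathcal{K}$, it vanishes on all of $\mathcal{K}$, giving $\Phi(x)\Phi(y)^{\ast}=\phi_{\Phi,\varphi}(\theta_{x,y})$, and Lemma \ref{lem42}(3) (at $x=y$) yields $Y\subseteq X_{\varphi}$.

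Part (4) is then the formal repackaging, once one reads ``$\Phi|_{Y}$ is a $\phi_{\Phi,\varphi}|_{K(Y)}$-representation'' as the identity $\Phi(y)\Phi(z)^{\ast}=\phi_{\Phi,\varphi}(\theta_{y,z})$ for all $y,z\in Y$ (using the natural embedding $K(Y)\hookrightarrow K(X)$, $\theta_{y,z}\mapsto\theta_{y,z}$). For the forward direction, $Y\subseteq X_{\varphi}$ together with Lemma \ref{lem42}(4) gives this identity for all $y\in Y$ and $z\in X$, in particular on $Y$; for the converse, specializing to $z=y$ and invoking Lemma \ref{lem42}(3) places each $y\in Y$ into $X_{\varphi}$. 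I expect the main obstacle to be part (1): correctly isolating where contractivity is indispensable (the multiplicativity-to-$M_{\varphi}$ step) and confirming that fullness is precisely what upgrades $\varphi|_{B}$ to a genuine $\ast$-homomorphism; the secondary subtlety is the ideal-theoretic point that an $A$-ideal is needed in (2), which is what separates it from the non-degeneracy hypothesis used in (3).
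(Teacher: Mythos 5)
Your proof is correct, and parts (3) and (4) coincide with the paper's argument (both rest on Lemma \ref{lem42}(1),(3),(4) and the non-degeneracy/identification of $\theta_{x,y}$). The interesting divergences are in (1) and (2). For (1) the paper goes element-by-element: from $\Phi(x\langle x,x\rangle)=\Phi(x)\Phi(x)^{\ast}\Phi(x)=\Phi(x)\varphi(\langle x,x\rangle)$ it invokes Proposition \ref{characterization}(4) (the contractive criterion) to get $\langle x,x\rangle\in M_{\varphi}$ and then polarizes; you instead prove globally that $\varphi|_{B}$ is a $\ast$-homomorphism (via your multiplicativity identity on products of inner products plus fullness and density) and then use the contractive description of $M_{\varphi}$. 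Both hinge on contractivity at exactly the same point, and yours has the small bonus of making the claim $Y=Y_{\varphi|_{B}}$ explicit (via $T_{\varphi|_{B}}=B$ and Proposition \ref{characterization}(1)), which the paper leaves tacit. For (2) the routes are genuinely different: the paper factors $x=u\langle v,w\rangle$ with $u,v,w\in Y$ (Cohen--Hewitt type factorization, $Y=Y\langle Y,Y\rangle$) and computes $\Phi(xa)=\Phi(x)\varphi(a)$ directly, implicitly using $\langle v,w\rangle\in M_{\varphi}$ from part (1); you avoid factorization entirely by showing the ideal $I=\overline{\mathrm{span}}\{\langle y,z\rangle\}$ of $A$ sits inside $T_{\varphi}$ (using $ba\in I\subseteq M_{\varphi}$ for $a\in I$, $b\in A$) and then quoting Proposition \ref{characterization}(1). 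Your version buys a cleaner explanation of why $B=A$ is the operative hypothesis (it is what makes $I$ an ideal of all of $A$), at the cost of routing through part (1) applied to $B=I$; the paper's version is shorter but leans on an unproved factorization fact. Both are valid.
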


\begin{proof}
$(1)$ Let $x \in Y$. We have 
\begin{equation*}
\Phi \left( x\left\langle x , x\right\rangle \right) =\Phi(x) \Phi(x)^*
\Phi(x)=\Phi \left( x \right) \varphi \left( \left\langle x , x
\right\rangle \right).
\end{equation*}
Hence by Proposition \ref{characterization}, $\left\langle x , x
\right\rangle \in M_\varphi$. Consequently, by the polarization identity, $%
\left\langle v , w \right\rangle \in M_\varphi$, for all $v, w \in Y$. Hence 
$B \subseteq M_{\varphi }$, whenever $Y$ is full.

$(2)$ Let $x \in Y$. It is well-known that there are elements $u, v, w$ in $%
Y $ such that $x=u \left\langle v , w \right\rangle$. Hence for every $a \in
A$, we have 
\begin{eqnarray*}
\Phi(x a) &=& \Phi(u \left\langle v , w \right\rangle a) = \Phi(u
\left\langle v , w a \right\rangle )= \Phi(u) \Phi(v)^* \Phi(wa) \\
&=& \Phi(u) \varphi(\left\langle v , w a \right\rangle)= \Phi(u)
\varphi(\left\langle v , w \right\rangle) \varphi(a)= \Phi(x)\varphi(a).
\end{eqnarray*}
Therefore $x \in X_{\Phi }= X_{\varphi }$.

$(3)$ By Lemma \ref{lem42}, for every $x,y\in Y$ we have $\Phi (\theta
_{x,x}(y))=\phi _{\Phi ,\varphi }(\theta _{x,x})\Phi (y)$. Thus for every $%
x,y\in Y$, we have 
\begin{equation*}
\Phi (x)\Phi (x)^{\ast }\Phi (y)=\Phi (x\langle x,y\rangle )=\Phi (\theta
_{x,x}(y))=\phi _{\Phi ,\varphi }(\theta _{x,x})\Phi (y).
\end{equation*}%
Now, if $[\Phi (Y)\mathcal{H}]=\mathcal{K}$, then the above equalities imply
that $\phi _{\Phi ,\varphi }(\theta _{x,x})=\Phi (x)\Phi (x)^{\ast }$ for
all $x\in Y$. Hence $Y\subseteq X_{\varphi }$, by Lemma \ref{lem42}.

$(4)$ By Lemma \ref{lem42}, $Y\subseteq X_{\varphi }$ if and only if $%
\phi_{\Phi,\varphi}(\theta_{x,y})=\Phi(x)\Phi(y)^*$, for all $x, y \in Y$.
This means that $\Phi\vert_Y$ is a $\left. \phi _{\Phi ,\varphi
}\right\vert_{K(Y)}$-representation.
\end{proof}

In the following proposition, we investigate how a completely positive
linear map $\varphi :A\rightarrow L(\mathcal{H})$ induces a completely
positive linear map on the linking $C^{\ast }$-algebra $\mathcal{L}(X)$. 

\begin{proposition}
\label{pro112} Let $X$ be a Hilbert $A$-module and $\varphi :A\rightarrow L(%
\mathcal{H})$ a completely positive linear map. Then there is a $\ast $%
-representation $\phi _{\varphi }:K(X)\rightarrow $ $L(\mathcal{K}_{\varphi
})$ and a $\varphi$-map $\Phi_\varphi:X\rightarrow L(\mathcal{H},\mathcal{%
K_\varphi})$ such that

\begin{enumerate}
\item $[\Phi _{\varphi }(X)\mathcal{H}]=\mathcal{K}_{\varphi }$ and $%
\widetilde{\varphi }_{\Phi _{\varphi }}:=%
\begin{bmatrix}
\phi _{\varphi } & \Phi _{\varphi } \\ 
\Phi _{\varphi }^{\ast } & \varphi%
\end{bmatrix}%
$ is a completely positive linear map on $\mathcal{L}(X)$;

\item For every $\varphi $-map $\Phi :X\rightarrow L(\mathcal{H},\mathcal{K}%
) $ there is an isometry $\ S_{\Phi }:\mathcal{K}_{\varphi }\rightarrow 
\mathcal{K}$ such that $\phi _{\Phi ,\varphi }\left( \cdot \right) =S_{\Phi
}\phi _{\varphi }\left( \cdot \right) S_{\Phi }^{\ast }$. Also, if $[\Phi (X)%
\mathcal{H}]=\mathcal{K}$, then $\ S_{\Phi }\ $is a unitary operator. 
\newline
Moreover, if $\varphi $ is pure, then:

\item $\phi _{\varphi }$ is an irreducible $\ast $-representation;

\item $\widetilde{\varphi }_{\Phi _{\varphi }}$ is pure;

\item The isometry $S_{\Phi }$ is unique up to a complex number of module
one.
\end{enumerate}
\end{proposition}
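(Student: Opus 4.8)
The plan is to build everything from a single minimal Stinespring representation $(\pi_{\varphi},\mathcal{H}_{\varphi},V_{\varphi})$ of $\varphi$ together with the $\pi_{\varphi}$-representation supplied by Remark \ref{ABN1}. First I would invoke Remark \ref{ABN1}(1) to obtain the $\varphi$-map $\Phi_{\varphi}:X\rightarrow L(\mathcal{H},\mathcal{K}_{\varphi})$, the $\pi_{\varphi}$-map $\Pi_{\pi}$ and the unitary $W_{\varphi}:\mathcal{K}_{\varphi}\rightarrow\mathcal{K}_{\pi}$ with $\Phi_{\varphi}(\cdot)=W_{\varphi}^{\ast}\Pi_{\pi}(\cdot)V_{\varphi}$, $[\Phi_{\varphi}(X)\mathcal{H}]=\mathcal{K}_{\varphi}$ and $[\Pi_{\pi}(X)\mathcal{H}_{\varphi}]=\mathcal{K}_{\pi}$. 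Since $W_{\varphi}$ is unitary this is a minimal Stinespring representation of the $\varphi$-map $\Phi_{\varphi}$ in the sense of Proposition \ref{pro111}, so applying that proposition to $\Phi_{\varphi}$ gives the $\ast$-representation $\phi_{\varphi}:=\phi_{\Phi_{\varphi},\varphi}=W_{\varphi}^{\ast}\Gamma(\cdot)W_{\varphi}$, where $\Gamma(\theta_{x,y})=\Pi_{\pi}(x)\Pi_{\pi}(y)^{\ast}$, and shows $\widetilde{\varphi}_{\Phi_{\varphi}}=\begin{bmatrix}\phi_{\varphi}&\Phi_{\varphi}\\\Phi_{\varphi}^{\ast}&\varphi\end{bmatrix}$ is completely positive. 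This settles (1), and records the fact (used below) that $\phi_{\varphi}$ is unitarily equivalent to $\Gamma$ via $W_{\varphi}$.

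For (2) I would first take $S_{\Phi}$ from Remark \ref{ABN1}(2): the unique isometry with $S_{\Phi}\Phi_{\varphi}=\Phi$, unitary when $[\Phi(X)\mathcal{H}]=\mathcal{K}$. To get the conjugation $\phi_{\Phi,\varphi}(\cdot)=S_{\Phi}\phi_{\varphi}(\cdot)S_{\Phi}^{\ast}$ I would compare the two $\pi_{\varphi}$-representations directly. Fixing a minimal Stinespring representation $((\Pi_{\Phi},\pi_{\varphi}),(W_{\Phi},V_{\varphi}),(\mathcal{H}_{\varphi},\mathcal{K}_{\Phi}))$ of $\Phi$ built over the same $(\pi_{\varphi},\mathcal{H}_{\varphi},V_{\varphi})$, both $\Pi_{\Phi}$ and $\Pi_{\pi}$ are $\pi_{\varphi}$-representations whose ranges generate the whole space, using $[\Pi_{\pi}(X)\mathcal{H}_{\varphi}]=[\Pi_{\pi}(X)V_{\varphi}\mathcal{H}]$ (because $\mathcal{H}_{\varphi}=[\pi_{\varphi}(A)V_{\varphi}\mathcal{H}]$ and $XA$ is dense in $X$). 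Since $\langle\Pi(x)V_{\varphi}\eta,\Pi(x')V_{\varphi}\eta'\rangle=\langle\eta,\varphi(\langle x,x'\rangle)\eta'\rangle$ is the same for both, the map $\Pi_{\pi}(x)V_{\varphi}\eta\mapsto\Pi_{\Phi}(x)V_{\varphi}\eta$ extends to a unitary $U:\mathcal{K}_{\pi}\rightarrow\mathcal{K}_{\Phi}$ with $U\Pi_{\pi}(x)=\Pi_{\Phi}(x)$, whence $\Gamma_{\Phi,\varphi}=U\Gamma U^{\ast}$. Putting $S_{\Phi}:=W_{\Phi}^{\ast}UW_{\varphi}$, one checks $S_{\Phi}^{\ast}S_{\Phi}=1$ (coisometry $W_{\Phi}$, unitaries $U,W_{\varphi}$), $S_{\Phi}\Phi_{\varphi}=\Phi$ (so it coincides with the isometry of Remark \ref{ABN1}(2)), and $\phi_{\Phi,\varphi}=W_{\Phi}^{\ast}U\Gamma U^{\ast}W_{\Phi}=S_{\Phi}\phi_{\varphi}S_{\Phi}^{\ast}$; unitarity in the full case is again Remark \ref{ABN1}(2). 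Routing through $U$ is what lets me pass from a mere intertwining relation to a genuine spatial conjugation without a separate support computation.

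For the purity statements I would use that $\varphi$ is pure iff $\pi_{\varphi}$ is irreducible. The heart is to show the linking representation $\rho=\begin{bmatrix}\Gamma&\Pi_{\pi}\\\Pi_{\pi}^{\ast}&\pi_{\varphi}\end{bmatrix}$ of $\mathcal{L}(X)$ on $\mathcal{K}_{\pi}\oplus\mathcal{H}_{\varphi}$ is irreducible: an element $\begin{bmatrix}C&D\\E&F\end{bmatrix}$ of its commutant must satisfy $D=E=0$ (nondegeneracy of $\pi_{\varphi}$ and of $\Gamma$, the latter from $\Gamma(\theta_{x,y})\Pi_{\pi}(z)=\Pi_{\pi}(x\langle y,z\rangle)$ and $[\Pi_{\pi}(X)\mathcal{H}_{\varphi}]=\mathcal{K}_{\pi}$), $F\in\pi_{\varphi}(A)'$, $C\in\Gamma(K(X))'$ and $C\Pi_{\pi}(x)=\Pi_{\pi}(x)F$; irreducibility of $\pi_{\varphi}$ forces $F=\lambda$ and then the generating property forces $C=\lambda$. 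Then (3) follows by compressing the irreducible $\rho$ by the projection $p=1_{\mathcal{K}_{\pi}}\oplus0$ (the strong limit of $\rho$ on an approximate unit of the corner $K(X)$), a standard corner argument yielding $\Gamma(K(X))''=L(\mathcal{K}_{\pi})$, so that $\Gamma$ and the unitarily equivalent $\phi_{\varphi}$ are irreducible. For (4) I would note $\widetilde{\varphi}_{\Phi_{\varphi}}(\cdot)=\mathcal{V}^{\ast}\rho(\cdot)\mathcal{V}$ with $\mathcal{V}=\begin{bmatrix}W_{\varphi}&0\\0&V_{\varphi}\end{bmatrix}$, that this triple is a minimal Stinespring representation of $\widetilde{\varphi}_{\Phi_{\varphi}}$ (its range generates $\mathcal{K}_{\pi}\oplus\mathcal{H}_{\varphi}$ since $W_{\varphi}$ is onto, $[\pi_{\varphi}(A)V_{\varphi}\mathcal{H}]=\mathcal{H}_{\varphi}$ and $[\Pi_{\pi}(X)\mathcal{H}_{\varphi}]=\mathcal{K}_{\pi}$), and hence that irreducibility of $\rho$ makes $\widetilde{\varphi}_{\Phi_{\varphi}}$ pure. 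For (5), if $S$ is any isometry with $\phi_{\Phi,\varphi}=S\phi_{\varphi}S^{\ast}$ then $\phi_{\Phi,\varphi}(T)S=S\phi_{\varphi}(T)$, so for two such isometries $S,S'$ the operator $S'^{\ast}S$ commutes with $\phi_{\varphi}(K(X))=\mathbb{C}1$ by (3); since $SS^{\ast}=S'S'^{\ast}$ is the support projection of $\phi_{\Phi,\varphi}$ (both equal $\lim_{\lambda}\phi_{\Phi,\varphi}(u_{\lambda})$), this yields $S=\lambda S'$ with $|\lambda|=1$.

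The main obstacle I anticipate is the commutant computation behind (3)–(4): establishing irreducibility of the linking representation $\rho$ and then transferring it to the corner $K(X)$. The diagonal/off-diagonal bookkeeping leans on nondegeneracy of both $\pi_{\varphi}$ and $\Gamma$ and on $[\Pi_{\pi}(X)\mathcal{H}_{\varphi}]=\mathcal{K}_{\pi}$, while the corner step needs the standard but nontrivial fact that compressing an irreducible representation of $\mathcal{L}(X)$ by the projection onto the $K(X)$-summand yields an irreducible representation of $K(X)$. I expect the passage from intertwining to conjugation in (2) to be the second most delicate point, cleanly handled by the unitary $U$ above.
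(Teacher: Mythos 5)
Your proposal is correct and follows essentially the same route as the paper: (1)--(2) via Remark \ref{ABN1} together with Proposition \ref{pro111} applied to the minimal representation $((\Pi_{\pi},\pi),(W_{\varphi},V_{\varphi}))$ of $\Phi_{\varphi}$, (3)--(4) via irreducibility of the linking representation on $\mathcal{K}_{\pi}\oplus\mathcal{H}_{\varphi}$ and unitary equivalence of $\Gamma$ with $\phi_{\varphi}$, and (5) via the commutant of the irreducible $\phi_{\varphi}$. The only local differences are that you prove the irreducibility of the linking representation and its corner directly where the paper cites Aramba\v{s}i\'{c}, and in (5) you identify $SS^{\ast}=S'S'^{\ast}$ as the strong limit of $\phi_{\Phi,\varphi}$ on an approximate unit (using nondegeneracy of $\phi_{\varphi}$) where the paper instead verifies $[\Phi(X)\mathcal{H}]\subseteq S(\mathcal{K}_{\varphi})$ by a direct computation; both yield $S_{\Phi}=\alpha S$ with $|\alpha|=1$.
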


\begin{proof}
$(1)$ Let $(\pi ,\mathcal{H}_{\varphi },V_{\varphi })$ be a minimal
Stinespring representation associated to $\varphi $. Then there exists a
triple $((\Phi _{\varphi },\mathcal{K}_{\varphi }),(\Pi _{\pi },\mathcal{K}%
_{\pi }),W_{\varphi })$ consisting of two Hilbert spaces $\mathcal{K}%
_{\varphi }$ and $\mathcal{K}_{\pi }$, a unitary operator $W_{\varphi }:%
\mathcal{K}_{\varphi }\rightarrow \mathcal{K}_{\pi }$, a $\varphi $-map $%
\Phi _{\varphi }:X\rightarrow L(\mathcal{H},\mathcal{K}_{\varphi })$ and a $%
\pi $-representation $\Pi _{\pi }:X\rightarrow L(\mathcal{H}_{\varphi },%
\mathcal{K}_{\pi })$ such that $\Phi _{\varphi }(\cdot )=W_{\varphi }^{\ast
}\Pi _{\pi }(\cdot )V_{\varphi }$, $[\Phi _{\varphi }(X)\mathcal{H}]=%
\mathcal{K}_{\varphi }$ and $[\Pi _{\pi }(X)\mathcal{H}_{\varphi }]=\mathcal{%
K}_{\pi }$ (see Remark \ref{ABN1} ). Clearly, $\left( \left( \Pi _{\pi },\pi
\right) ,\left( \mathcal{H}_{\varphi },\mathcal{K}_{\pi }\right) ,\left(
V_{\varphi },W_{\varphi }\right) \right) $ is a minimal Stinespring
representation associated to the $\varphi $-map $\Phi _{\varphi }$. By
Proposition \ref{pro111}, there is a $\ast $-representation $\phi _{\varphi
}:K(X)\rightarrow L(\mathcal{H}_{\varphi })\ $ such that 
\begin{equation*}
\phi _{\varphi }\left( \theta _{x,y}\right) =W_{\varphi }^{\ast }\Pi _{\pi
}\left( x\right) \Pi _{\pi }\left( y\right) ^{\ast }W_{\varphi },
\end{equation*}%
and the map $\widetilde{\varphi }_{\Phi _{\varphi }}$ is a completely
positive linear map on $\mathcal{L}(X)$.

$(2)$ Suppose that $\Phi :X\rightarrow L(\mathcal{H},\mathcal{K})$ is
another $\varphi $-map. Then, by Remark \ref{ABN1} $(2)$, there is a unique
isometry $S_{\Phi }:\mathcal{K}_{\varphi }\rightarrow \mathcal{K}$ such that 
$S_{\Phi }\Phi _{\varphi }(\cdot )=\Phi (\cdot )$. Since $\left( \left( \Psi
_{\pi },\pi \right) ,\left( \mathcal{H}_{\varphi },\mathcal{K}_{\pi }\right)
,\left( V_{\varphi },W_{\varphi }S_{\Phi }^{\ast }\right) \right) $ is a
minimal Stinespring representation for the $\varphi $-map $\Phi $,$\ $by
Proposition \ref{pro111}, for the $\ast $-representation $\phi _{\Phi
,\varphi }:K(X)\rightarrow \mathcal{L(K)},\ $ we have 
\begin{equation*}
\phi _{\Phi ,\varphi }\left( \theta _{x,y}\right) =S_{\Phi }W_{\varphi
}^{\ast }\Pi _{\pi }\left( x\right) \Pi _{\pi }\left( y\right) ^{\ast
}W_{\varphi }S_{\Phi }^{\ast }=S_{\Phi }\phi _{\varphi }\left( \theta
_{x,y}\right) S_{\Phi }^{\ast }.
\end{equation*}%
Moreover, if $[\Phi (X)\mathcal{H}]=\mathcal{K}$, then $S_{\Phi }$ is a
unitary operator\ (see Remark \ref{ABN1} $(2))$.

$(3)$ If $\varphi $ is pure, since $(\pi ,\mathcal{H}_{\varphi },V_{\varphi
})$ is a minimal Stinespring representation associated to $\varphi ,$ the $%
\ast $-representation $\pi $ is irreducible. On the other hand, since $[\Pi
_{\pi }(X)\mathcal{H}_{\varphi }]=\mathcal{K}_{\pi }$, by \cite[Proposition
3.6 and Lemma 3.5]{Ar}\textbf{, }the $\ast $ -representations $\Pi _{\pi }$ $%
:X\rightarrow $ $L(\mathcal{H}_{\varphi },\mathcal{K}_{\pi })$, $\Gamma
_{\pi }:$ $K(X)\rightarrow L(\mathcal{K}_{\pi }),$ $\Gamma _{\pi }\left(
\theta _{x,y}\right) =$ $\Pi _{\pi }(x)\Pi _{\pi }(y)^{\ast }$ and $%
\begin{bmatrix}
\Gamma _{\pi } & \Pi _{\pi } \\ 
\Pi _{\pi }^{\ast } & \pi%
\end{bmatrix}%
:\mathcal{L}(X)\rightarrow L(\mathcal{K}_{\pi }\oplus \mathcal{H}_{\varphi
}) $ are irreducible. Since \textbf{\ }$W_{\varphi }$ is a unitary operator,
the $\ast $-representations $\Gamma _{\pi }$ and $\phi _{\varphi }$ are
unitarily equivalent, and so $\phi _{\varphi }$ is irreducible.

$\left( 4\right) $ Since the triple $\left( 
\begin{bmatrix}
\Gamma _{\pi } & \Pi _{\pi } \\ 
\Pi _{\pi }^{\ast } & \pi%
\end{bmatrix}%
,\mathcal{K}_{\pi }\oplus \mathcal{H}_{\varphi },%
\begin{bmatrix}
W_{\varphi } & 0 \\ 
0 & V_{\varphi }%
\end{bmatrix}%
\right) $ is an irreducible dilation for $\widetilde{\varphi }_{\Phi
_{\varphi }}$, $\widetilde{\varphi }_{\Phi _{\varphi }}$ is pure.

$(5)$ Let $S:\mathcal{K}_{\varphi }\rightarrow \mathcal{K}$ be another
isometry such that $\phi _{\Phi ,\varphi }\left( \cdot \right) =S\phi
_{\varphi }\left( \cdot \right) S^{\ast }$. Then 
\begin{equation*}
S^{\ast }S_{\Phi }\phi _{\varphi }\left( \cdot \right) =S^{\ast }S_{\Phi
}\phi _{\varphi }\left( \cdot \right) S_{\Phi }^{\ast }S_{\Phi }=S^{\ast
}\phi _{\Phi ,\varphi }\left( \cdot \right) S_{\Phi }=S^{\ast }S\phi
_{\varphi }\left( \cdot \right) S^{\ast }S_{\Phi }=\phi _{\varphi }\left(
\cdot \right) S^{\ast }S_{\Phi }
\end{equation*}%
and so $S^{\ast }S_{\Phi }$ is an element in the commutant of $\phi
_{\varphi }\left( K(X)\right) $. Since $\phi _{\varphi }$ is irreducible,
there is a complex number $\alpha $ such that $S^{\ast }S_{\Phi }=\alpha $id$%
_{\mathcal{K}_{\varphi }}$. From 
\begin{eqnarray*}
S\phi _{\varphi }\left( \theta _{x,y}\right) S^{\ast }\left( \Phi \left(
z\right) \xi \right) &=&\phi _{\Phi ,\varphi }\left( \theta _{x,y}\right)
\left( \Phi \left( z\right) \xi \right) \\
&=&S_{\Phi }W_{\varphi }^{\ast }\Psi _{\pi }\left( x\right) \Psi _{\pi
}\left( y\right) ^{\ast }W_{\varphi }S_{\Phi }^{\ast }S_{\Phi }W_{\varphi
}^{\ast }\Psi _{\pi }\left( z\right) V_{\varphi }\xi \\
&=&S_{\Phi }W_{\varphi }^{\ast }\Psi _{\pi }\left( x\right) \pi \left(
\left\langle y,z\right\rangle \right) V_{\varphi }\xi =S_{\Phi }W_{\varphi
}^{\ast }\Psi _{\pi }\left( x\left\langle y,z\right\rangle \right)
V_{\varphi }\xi \\
&=&\Phi \left( x\left\langle y,z\right\rangle \right) \xi
\end{eqnarray*}%
for all $x,y,z\in X$ and $\xi \in \mathcal{H}$ and taking into account that $%
X\left\langle X,X\right\rangle $ is dense in $X,\ $we deduce that $\left[
\Phi \left( X\right) \mathcal{H}\right] \subseteq S\left( \mathcal{K}%
_{\varphi }\right) .$

On the other hand, since $S$ is an isometry, $SS^{\ast }$ is a projection on
the range $S\left( \mathcal{K}_{\varphi }\right) $ of $S$. Thus, we have 
\begin{equation*}
\alpha S\left( \Phi _{\varphi }(x)\xi \right) =SS^{\ast }S_{\Phi }\left(
\Phi _{\varphi }(x)\xi \right) =SS^{\ast }\left( \Phi \left( x\right) \xi
\right) =\Phi \left( x\right) \xi
\end{equation*}%
for all $\xi \in \mathcal{H}$. From the above relation and Remark \ref{ABN1} 
$(2)$, we deduce that $\alpha S=S_{\Phi },\ $and since $S$ and $S_{\Phi }$
are isometries, $\left\vert \alpha \right\vert =1.$
\end{proof}


\begin{corollary}
\label{cprelation} Let $X$ be a Hilbert $A$-module and $\varphi
:A\rightarrow L(\mathcal{H})$ a completely positive linear map. Then, for
every $\varphi $-map $\Phi :X\rightarrow L(\mathcal{H},\mathcal{K})$ there
is an isometry $S_{\Phi }$ such that 
\begin{equation*}
\widetilde{\varphi }_{\Phi }\left( \cdot \right) =\text{diag}(S_{\Phi },%
\text{id}_{\mathcal{H}})\widetilde{\varphi }_{\Phi _{\varphi }}\left( \cdot
\right) \text{diag}(S_{\Phi }^{\ast },\text{id}_{\mathcal{H}}).
\end{equation*}
\end{corollary}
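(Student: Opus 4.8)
The plan is to take the isometry $S_{\Phi}:\mathcal{K}_{\varphi}\rightarrow\mathcal{K}$ supplied by Proposition \ref{pro112}$(2)$ and to verify the asserted identity entrywise on the block decomposition of $\mathcal{L}(X)$. The two ingredients I would use are the intertwining relation $\phi_{\Phi,\varphi}(\cdot)=S_{\Phi}\phi_{\varphi}(\cdot)S_{\Phi}^{\ast}$ from Proposition \ref{pro112}$(2)$, together with the defining property $S_{\Phi}\Phi_{\varphi}(\cdot)=\Phi(\cdot)$ of $S_{\Phi}$ coming from Remark \ref{ABN1}$(2)$.

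First I would write a general element of $\mathcal{L}(X)$ in the form $\left[\begin{smallmatrix} T & x \\ y^{\ast} & a \end{smallmatrix}\right]$ with $T\in K(X)$, $x,y\in X$ and $a\in A$. Applying $\widetilde{\varphi}_{\Phi_{\varphi}}$ and using its block formula from Proposition \ref{pro112} produces the four entries $\phi_{\varphi}(T)$, $\Phi_{\varphi}(x)$, $\Phi_{\varphi}(y)^{\ast}$ and $\varphi(a)$. Conjugating by $\text{diag}(S_{\Phi},\text{id}_{\mathcal{H}})$ then transforms these into $S_{\Phi}\phi_{\varphi}(T)S_{\Phi}^{\ast}$, $S_{\Phi}\Phi_{\varphi}(x)$, $\Phi_{\varphi}(y)^{\ast}S_{\Phi}^{\ast}$ and $\varphi(a)$ respectively.

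The final step is to identify each transformed entry with the corresponding entry of $\widetilde{\varphi}_{\Phi}$ as defined in Proposition \ref{pro111}: the $(1,1)$-entry becomes $\phi_{\Phi,\varphi}(T)$ by the intertwining relation, the $(1,2)$-entry becomes $\Phi(x)$ and the $(2,1)$-entry becomes $(S_{\Phi}\Phi_{\varphi}(y))^{\ast}=\Phi(y)^{\ast}$ by the defining property of $S_{\Phi}$, while the $(2,2)$-entry is left untouched. This matches $\widetilde{\varphi}_{\Phi}$ exactly, which gives the claim.

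Since every step is mechanical, I do not anticipate a genuine obstacle. The only point deserving a remark is that the intertwining relation must be invoked on all of $K(X)$ rather than merely on the rank-one operators $\theta_{x,y}$; this is already part of the statement of Proposition \ref{pro112}$(2)$, but were one to reprove it here one would verify it on the generators $\theta_{x,y}$ and extend by linearity, the density of their linear span in $K(X)$, and the norm-continuity of the $\ast$-representations $\phi_{\varphi}$ and $\phi_{\Phi,\varphi}$.
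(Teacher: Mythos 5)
Your proposal is correct and is exactly the argument the paper intends: the corollary is stated as an immediate consequence of Proposition \ref{pro112}, and your entrywise verification using $\phi_{\Phi,\varphi}(\cdot)=S_{\Phi}\phi_{\varphi}(\cdot)S_{\Phi}^{\ast}$ and $S_{\Phi}\Phi_{\varphi}(\cdot)=\Phi(\cdot)$ is the routine computation being left to the reader. Your closing remark about extending the intertwining relation from the generators $\theta_{x,y}$ to all of $K(X)$ by density and continuity is a fair point of care, though it is already built into the statement of Proposition \ref{pro112}$(2)$.
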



\begin{remark}
For a given completely positive map $\varphi :A\rightarrow L(\mathcal{H})$
and a right Hilbert $A$-module $X$, the $\varphi $-map $\Phi _{\varphi
}:X\rightarrow L(\mathcal{H},\mathcal{K}_{\varphi })$ and the $\ast $%
-representation $\phi _{\varphi }:K(X)\rightarrow $ $L(\mathcal{K}_{\varphi
})$ are unique (up to unitary equivalence). Therefore the induced completely
positive map $\widetilde{\varphi }_{\Phi _{\varphi }}$ associated with $%
\varphi $ in Proposition \ref{pro112} is unique (up to unitary equivalence).
\end{remark}

In Proposition \ref{pro111}, we showed that a $\varphi $-map $\Phi $ on a
Hilbert $C^{\ast }$-module $X$ induces a completely positive linear map $%
\widetilde{\varphi }_{\Phi }$ on the linking $C^{\ast }$-algebra $\mathcal{L}%
(X)$. In the next theorem, we determine the multiplicative (and the ternary)
domain of $\widetilde{\varphi }_{\Phi }$.

\begin{theorem}
\label{thmlink} Let $X$ be a Hilbert $A$-module, $\varphi :A\rightarrow L(%
\mathcal{H})$ a contractive completely positive linear map and $\Phi
:X\rightarrow L(\mathcal{H},\mathcal{K})$ a $\varphi $-map. If $\widetilde{%
\varphi }_{\Phi }:\mathcal{L}(X)\rightarrow L(\mathcal{K}\oplus \mathcal{H})$
is the completely positive linear map on $\mathcal{L}(X)$ associated to the $%
\varphi $-map $\Phi $, then $\mathcal{L}_{M_\varphi}(X_{\varphi })\subseteq
M_{\widetilde{\varphi }_{\Phi }}$. Moreover, 
\begin{equation*}
M_{\widetilde{\varphi }_{\Phi }}=\left\{ 
\begin{bmatrix}
T & y \\ 
x^{\ast } & a%
\end{bmatrix}%
:T\in K(X),x,y\in X_{\varphi },a\in M_{\varphi }\right\},
\end{equation*}
and also, 
\begin{equation*}
T_{\widetilde{\varphi }_{\Phi }}=\left\{ 
\begin{bmatrix}
T & y \\ 
x^{\ast } & a%
\end{bmatrix}%
:T\in K(X),x,y\in X_{\varphi },a\in T_{\varphi }\right\}.
\end{equation*}
\end{theorem}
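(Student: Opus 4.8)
The plan is to realize $\widetilde{\varphi}_{\Phi}$ through a single minimal Stinespring triple built out of the one for the pair $(\varphi,\Phi)$, and then to read off both domains from the block structure. Fix a minimal Stinespring representation $\left(\left(\Pi_{\Phi},\pi_{\varphi}\right),(W_{\Phi},V_{\varphi}),(\mathcal{H}_{\varphi},\mathcal{K}_{\Phi})\right)$ of $(\varphi,\Phi)$ and let $\Pi_{\Phi,\varphi}=\begin{bmatrix}\Gamma_{\Phi,\varphi}&\Pi_{\Phi}\\\Pi_{\Phi}^{\ast}&\pi_{\varphi}\end{bmatrix}$ be the $\ast$-representation of $\mathcal{L}(X)$ on $\mathcal{K}_{\Phi}\oplus\mathcal{H}_{\varphi}$ from Proposition \ref{pro111}, so that with $\widetilde{V}=\begin{bmatrix}W_{\Phi}&0\\0&V_{\varphi}\end{bmatrix}$ we have $\widetilde{\varphi}_{\Phi}(\cdot)=\widetilde{V}^{\ast}\Pi_{\Phi,\varphi}(\cdot)\widetilde{V}$. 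First I would verify that $\left(\Pi_{\Phi,\varphi},\mathcal{K}_{\Phi}\oplus\mathcal{H}_{\varphi},\widetilde{V}\right)$ is \emph{minimal}: feeding the corners $\begin{bmatrix}0&0\\0&a\end{bmatrix}$ and $\begin{bmatrix}0&y\\0&0\end{bmatrix}$ into $\Pi_{\Phi,\varphi}(\cdot)\widetilde{V}$ and taking closed spans gives $0\oplus[\pi_{\varphi}(A)V_{\varphi}\mathcal{H}]=0\oplus\mathcal{H}_{\varphi}$ and $[\Pi_{\Phi}(X)V_{\varphi}\mathcal{H}]\oplus0=\mathcal{K}_{\Phi}\oplus0$, which together exhaust $\mathcal{K}_{\Phi}\oplus\mathcal{H}_{\varphi}$. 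Since $W_{\Phi}$ is a coisometry, $\widetilde{V}\widetilde{V}^{\ast}=\begin{bmatrix}\mathrm{id}_{\mathcal{K}_{\Phi}}&0\\0&V_{\varphi}V_{\varphi}^{\ast}\end{bmatrix}$, so $\mathrm{id}-\widetilde{V}\widetilde{V}^{\ast}=\begin{bmatrix}0&0\\0&P\end{bmatrix}$ with $P=\mathrm{id}_{\mathcal{H}_{\varphi}}-V_{\varphi}V_{\varphi}^{\ast}$.

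For the multiplicative domain I would apply Theorem \ref{mtch0}(1) to $\widetilde{\varphi}_{\Phi}$. For $s=\begin{bmatrix}T&y\\x^{\ast}&a\end{bmatrix}$ the two defining conditions become $(\mathrm{id}-\widetilde{V}\widetilde{V}^{\ast})\Pi_{\Phi,\varphi}(s)\widetilde{V}=\begin{bmatrix}0&0\\P\Pi_{\Phi}(x)^{\ast}W_{\Phi}&P\pi_{\varphi}(a)V_{\varphi}\end{bmatrix}=0$ and $\widetilde{V}^{\ast}\Pi_{\Phi,\varphi}(s)(\mathrm{id}-\widetilde{V}\widetilde{V}^{\ast})=\begin{bmatrix}0&W_{\Phi}^{\ast}\Pi_{\Phi}(y)P\\0&V_{\varphi}^{\ast}\pi_{\varphi}(a)P\end{bmatrix}=0$; note that $T$ disappears because $\mathrm{id}-\widetilde{V}\widetilde{V}^{\ast}$ kills the first summand, so $T\in K(X)$ is unconstrained. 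The two entries in $a$ are precisely $P\pi_{\varphi}(a)V_{\varphi}=0$ and $V_{\varphi}^{\ast}\pi_{\varphi}(a)P=0$, i.e. $a\in M_{\varphi}$ by Theorem \ref{mtch0}(1) applied to $\varphi$. For the $x,y$ entries I would use $W_{\Phi}W_{\Phi}^{\ast}=\mathrm{id}_{\mathcal{K}_{\Phi}}$: the $x$-condition is the adjoint of $W_{\Phi}^{\ast}\Pi_{\Phi}(x)P=0$, and multiplying on the left by $W_{\Phi}$ upgrades this to $\Pi_{\Phi}(x)P=0$, equivalently $x\in X_{\varphi}$ by Corollary \ref{stch}; similarly $y\in X_{\varphi}$. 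This gives the stated description of $M_{\widetilde{\varphi}_{\Phi}}$. The inclusion $\mathcal{L}_{M_{\varphi}}(X_{\varphi})\subseteq M_{\widetilde{\varphi}_{\Phi}}$ is then immediate: under the natural inclusions $K(X_{\varphi})\hookrightarrow K(X)$, $X_{\varphi}\hookrightarrow X$ and $M_{\varphi}\hookrightarrow A$, every element of $\mathcal{L}_{M_{\varphi}}(X_{\varphi})$ already has the block form appearing in the description.

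For the ternary domain I would invoke Proposition \ref{mtch}(1): $s\in T_{\widetilde{\varphi}_{\Phi}}$ iff $s\in M_{\widetilde{\varphi}_{\Phi}}$ and $\Pi_{\Phi,\varphi}(s)=\widetilde{V}\,\widetilde{\varphi}_{\Phi}(s)\,\widetilde{V}^{\ast}$. Expanding $\widetilde{V}\,\widetilde{\varphi}_{\Phi}(s)\,\widetilde{V}^{\ast}=\begin{bmatrix}W_{\Phi}\phi_{\Phi,\varphi}(T)W_{\Phi}^{\ast}&W_{\Phi}\Phi(y)V_{\varphi}^{\ast}\\V_{\varphi}\Phi(x)^{\ast}W_{\Phi}^{\ast}&V_{\varphi}\varphi(a)V_{\varphi}^{\ast}\end{bmatrix}$ and matching it against $\Pi_{\Phi,\varphi}(s)$ entrywise, the $(1,1)$-equation $\Gamma_{\Phi,\varphi}(T)=W_{\Phi}\phi_{\Phi,\varphi}(T)W_{\Phi}^{\ast}$ holds for all $T$ because $\phi_{\Phi,\varphi}(\cdot)=W_{\Phi}^{\ast}\Gamma_{\Phi,\varphi}(\cdot)W_{\Phi}$ and $W_{\Phi}W_{\Phi}^{\ast}=\mathrm{id}_{\mathcal{K}_{\Phi}}$; the off-diagonal equations reduce to $\Pi_{\Phi}(x)=W_{\Phi}\Phi(x)V_{\varphi}^{\ast}$ and its adjoint, which hold automatically for $x,y\in X_{\varphi}$ by Corollary \ref{stch}(4); and the $(2,2)$-equation is $\pi_{\varphi}(a)=V_{\varphi}\varphi(a)V_{\varphi}^{\ast}$, which for $a\in M_{\varphi}$ characterizes $a\in T_{\varphi}$ by Proposition \ref{mtch}(1). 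Thus the only constraint added to $M_{\widetilde{\varphi}_{\Phi}}$ is $a\in T_{\varphi}$, yielding the claimed form of $T_{\widetilde{\varphi}_{\Phi}}$.

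The step I expect to be the main obstacle is the passage from the weak vanishing condition $W_{\Phi}^{\ast}\Pi_{\Phi}(x)P=0$ delivered by Theorem \ref{mtch0} to the strong one $\Pi_{\Phi}(x)P=0$ required to recognize $x\in X_{\varphi}$ through Corollary \ref{stch}; this is exactly where the coisometry identity $W_{\Phi}W_{\Phi}^{\ast}=\mathrm{id}_{\mathcal{K}_{\Phi}}$ (available because $\varphi$, hence $\widetilde{\varphi}_{\Phi}$, is contractive) is indispensable, and the same identity is what trivializes the $(1,1)$-equation in the ternary computation. A secondary point needing care is checking that $\theta_{y,x}\mapsto\theta_{y,x}$ defines an isometric $\ast$-homomorphism $K(X_{\varphi})\to K(X)$, so that $\mathcal{L}_{M_{\varphi}}(X_{\varphi})$ really embeds as a $C^{\ast}$-subalgebra of $\mathcal{L}(X)$; here the multiplication and adjoint rules for rank-one operators coincide in the two modules because $\langle x,y\rangle\in M_{\varphi}$ for $x,y\in X_{\varphi}$.
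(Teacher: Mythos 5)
Your argument is correct, and it takes a genuinely different route from the paper's. The paper proves the description of $M_{\widetilde{\varphi }_{\Phi }}$ by invoking the Schwarz--equality characterization of the multiplicative domain (\cite[Theorem 2.1]{G}) and then grinding out the block entries of $\widetilde{\varphi }_{\Phi }(s^{\ast }s)$ and $\widetilde{\varphi }_{\Phi }(ss^{\ast })$, reducing them to the three conditions $\phi _{\Phi ,\varphi }(\theta _{x,x})=\Phi (x)\Phi (x)^{\ast }$, $\Phi (xa)=\Phi (x)\varphi (a)$ and $\varphi (a^{\ast }a)=\varphi (a)^{\ast }\varphi (a)$ via Lemma \ref{lem42}; the ternary domain is then handled by a separate direct verification with test elements $\bigl[\begin{smallmatrix}0&0\\0&b\end{smallmatrix}\bigr]$ and $\bigl[\begin{smallmatrix}R&u\\v^{\ast }&b\end{smallmatrix}\bigr]$. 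You instead exhibit $\bigl(\Pi _{\Phi ,\varphi },\mathcal{K}_{\Phi }\oplus \mathcal{H}_{\varphi },\widetilde{V}\bigr)$ as a \emph{minimal} Stinespring dilation of $\widetilde{\varphi }_{\Phi }$ (the minimality check is the one genuinely new step your route requires, and you do it correctly) and then read off both domains at once from the projection identities of Theorem \ref{mtch0}(1) and Proposition \ref{mtch}(1). This buys you two things: the multiplicative and ternary domains are computed by the same mechanism, with the block $\Gamma _{\Phi ,\varphi }(T)$ visibly annihilated by $\mathrm{id}-\widetilde{V}\widetilde{V}^{\ast }$ so that $T$ is manifestly unconstrained, and the independence of the result from the contractivity hypothesis is transparent rather than relegated to a remark, since Theorem \ref{mtch0}(1) needs no contractivity. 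One small misattribution: the identity $W_{\Phi }W_{\Phi }^{\ast }=\mathrm{id}_{\mathcal{K}_{\Phi }}$, which you correctly identify as the crux of upgrading $W_{\Phi }^{\ast }\Pi _{\Phi }(x)P=0$ to $\Pi _{\Phi }(x)P=0$, comes from the Bhat--Ramesh--Sumesh construction of the minimal representation of the pair $(\varphi ,\Phi )$ (where $W_{\Phi }$ is always a coisometry), not from contractivity of $\varphi $; this only strengthens your point that contractivity is inessential. Your closing caveat about the embedding $K(X_{\varphi })\hookrightarrow K(X)$ is well taken and is glossed over in the paper as well.
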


\begin{proof}
Let $%
\begin{bmatrix}
T & y \\ 
x^{\ast } & a%
\end{bmatrix}%
\in \mathcal{L}(X)$. By \cite[Theorem 2.1]{G},$\ 
\begin{bmatrix}
T & y \\ 
x^{\ast } & a%
\end{bmatrix}%
\in M_{\widetilde{\varphi }_{\Phi }}$ if and only if 
\begin{equation}
\widetilde{\varphi }_{\Phi }\left( 
\begin{bmatrix}
T & y \\ 
x^{\ast } & a%
\end{bmatrix}%
^{\ast }%
\begin{bmatrix}
T & y \\ 
x^{\ast } & a%
\end{bmatrix}%
\right) =\widetilde{\varphi }_{\Phi }\left( 
\begin{bmatrix}
T & y \\ 
x^{\ast } & a%
\end{bmatrix}%
\right) ^{\ast }\widetilde{\varphi }_{\Phi }\left( 
\begin{bmatrix}
T & y \\ 
x^{\ast } & a%
\end{bmatrix}%
\right)  \label{eq3}
\end{equation}%
and 
\begin{equation}
\widetilde{\varphi }_{\Phi }\left( 
\begin{bmatrix}
T & y \\ 
x^{\ast } & a%
\end{bmatrix}%
\begin{bmatrix}
T & y \\ 
x^{\ast } & a%
\end{bmatrix}%
^{\ast }\right) =\widetilde{\varphi }_{\Phi }\left( 
\begin{bmatrix}
T & y \\ 
x^{\ast } & a%
\end{bmatrix}%
\right) \widetilde{\varphi }_{\Phi }\left( 
\begin{bmatrix}
T & y \\ 
x^{\ast } & a%
\end{bmatrix}%
\right) ^{\ast }.  \label{eq4}
\end{equation}

A simple calculus shows that the relation (\ref{eq3}) holds if and only if%
\begin{eqnarray*}
&&%
\begin{bmatrix}
\phi _{\Phi ,\varphi }\left( T^{\ast }T\right) +\phi _{\Phi ,\varphi }\left(
\theta _{x,x}\right) & \Phi \left( T^{\ast }\left( y\right) \right) +\Phi
\left( xa\right) \\ 
\Phi \left( T^{\ast }\left( y\right) \right) ^{\ast }+\Phi \left( xa\right)
^{\ast } & \varphi \left( \left\langle y,y\right\rangle \right) +\varphi
\left( a^{\ast }a\right)%
\end{bmatrix}
\\
&=&%
\begin{bmatrix}
\phi _{\Phi ,\varphi }\left( T^{\ast }\right) \phi _{\Phi ,\varphi }\left(
T\right) +\Phi \left( x\right) \Phi \left( x\right) ^{\ast } & \phi _{\Phi
,\varphi }\left( T^{\ast }\right) \Phi \left( y\right) +\Phi \left( x\right)
\varphi \left( a\right) \\ 
\Phi \left( y\right) ^{\ast }\phi _{\Phi ,\varphi }\left( T\right) +\varphi
\left( a\right) ^{\ast }\Phi \left( x\right) ^{\ast } & \Phi \left( y\right)
^{\ast }\Phi \left( y\right) +\varphi \left( a\right) ^{\ast }\varphi \left(
a\right)%
\end{bmatrix}%
\end{eqnarray*}%
Now, since $\phi _{\Phi ,\varphi }$ is a $*$-homomorphism, $\Phi $ is a $%
\varphi $-map and $\Phi (S(z))=\phi _{\Phi ,\varphi }(S)\Phi (z)$ for all $%
S\in K(X)$ and for all $z\in X\ $ [Lemma \ref{lem42} (1)], the relation (\ref%
{eq3}) holds if and only if 
\begin{equation*}
\left\{ 
\begin{array}{ccc}
\phi _{\Phi ,\varphi }\left( \theta _{x,x}\right) & = & \Phi \left( x\right)
\Phi \left( x\right) ^{\ast } \\ 
\Phi \left( xa\right) & = & \Phi \left( x\right) \varphi \left( a\right) \\ 
\varphi \left( a^{\ast }a\right) & = & \varphi \left( a\right) ^{\ast
}\varphi \left( a\right)%
\end{array}%
\right. .
\end{equation*}%
By Lemma \ref{lem42} (3) we conclude that the relation (\ref{eq3}) holds if
and only if $x\in X_{\varphi }$ and $\varphi (a^{\ast }a)=\varphi (a)^{\ast
}\varphi (a)$.

Similarly, the relation (\ref{eq4}) holds if and only if $y\in X_{\varphi }$
and $\varphi \left( aa^{\ast }\right) =\varphi \left( a\right) \varphi
\left( a\right) ^{\ast }$. Therefore $%
\begin{bmatrix}
T & y \\ 
x^{\ast } & a%
\end{bmatrix}%
\in M_{\widetilde{\varphi }_{\Phi }}$ if and only if $a\in M_{\varphi }$, $%
x,y\in X_{\varphi }$. So, 
\begin{equation*}
M_{\widetilde{\varphi }_{\Phi }}=\left\{ 
\begin{bmatrix}
T & y \\ 
x^{\ast } & a%
\end{bmatrix}%
:T\in K(X),x,y\in X_{\varphi },a\in M_{\varphi }\right\} .
\end{equation*}

In particular, we have 
\begin{equation*}
\mathcal{L}_{M_{\varphi }}(X_{\varphi })=\left\{ 
\begin{bmatrix}
T & y \\ 
x^{\ast } & a%
\end{bmatrix}%
:T\in K(X_{\varphi }),x,y\in X_{\varphi },a\in M_{\varphi }\right\}
\subseteq M_{\widetilde{\varphi }_{\Phi }}.
\end{equation*}

Finally, assume that $%
\begin{bmatrix}
T & x \\ 
y^{\ast } & a%
\end{bmatrix}%
\in T_{\widetilde{\varphi }_{\Phi }}$. Since $T_{\widetilde{\varphi }_{\Phi
}}\subseteq M_{\widetilde{\varphi }_{\Phi }}$, then $x,y\in X_{\varphi }$, $%
a\in M_{\varphi }$ and $T\in K(X)$. Moreover, we have 
\begin{eqnarray*}
&&\widetilde{\varphi }_{\Phi }\left( 
\begin{bmatrix}
0 & 0 \\ 
0 & b%
\end{bmatrix}%
^{\ast }%
\begin{bmatrix}
T & x \\ 
y^{\ast } & a%
\end{bmatrix}%
\begin{bmatrix}
0 & 0 \\ 
0 & b%
\end{bmatrix}%
\right) \\
&=&\widetilde{\varphi }_{\Phi }\left( 
\begin{bmatrix}
0 & 0 \\ 
0 & b%
\end{bmatrix}%
\right) ^{\ast }\widetilde{\varphi }_{\Phi }\left( 
\begin{bmatrix}
T & x \\ 
y^{\ast } & a%
\end{bmatrix}%
\right) \widetilde{\varphi }_{\Phi }\left( 
\begin{bmatrix}
0 & 0 \\ 
0 & b%
\end{bmatrix}%
\right)
\end{eqnarray*}%
for all $b\in A.$ This equality implies that $\varphi \left( b^{\ast
}ab\right) =\varphi \left( b\right) ^{\ast }\varphi \left( a\right) \varphi
\left( a\right) \ $for all $b\in A,\ $and so $a\in T_{\varphi }$.

Conversely, let $x,y\in X_{\varphi }$, $a\in T_{\varphi }$ and $T\in K(X)$.
A simple calculation shows that 
\begin{eqnarray*}
&&\widetilde{\varphi }_{\Phi }\left( 
\begin{bmatrix}
R & u \\ 
v^{\ast } & b%
\end{bmatrix}%
^{\ast }%
\begin{bmatrix}
0 & x \\ 
y^{\ast } & 0%
\end{bmatrix}%
\begin{bmatrix}
R & u \\ 
v^{\ast } & b%
\end{bmatrix}%
\right) \\
&=&\widetilde{\varphi }_{\Phi }\left( 
\begin{bmatrix}
R & u \\ 
v^{\ast } & b%
\end{bmatrix}%
\right) ^{\ast }\widetilde{\varphi }_{\Phi }\left( 
\begin{bmatrix}
0 & x \\ 
y^{\ast } & 0%
\end{bmatrix}%
\right) \widetilde{\varphi }_{\Phi }(\left( 
\begin{bmatrix}
R & u \\ 
v^{\ast } & b%
\end{bmatrix}%
\right)
\end{eqnarray*}%
for all $%
\begin{bmatrix}
R & u \\ 
v^{\ast } & b%
\end{bmatrix}%
\in \mathcal{L}(X)$. Therefore, $%
\begin{bmatrix}
0 & x \\ 
y^{\ast } & 0%
\end{bmatrix}%
\in T_{\widetilde{\varphi }_{\Phi }}$. Also, by using Proposition \ref%
{tttheorem} and Lemma \ref{lem42}, one can show that $%
\begin{bmatrix}
T & 0 \\ 
0 & a%
\end{bmatrix}%
\in T_{\widetilde{\varphi }_{\Phi }}$. Therefore $%
\begin{bmatrix}
T & x \\ 
y^{\ast } & a%
\end{bmatrix}%
\in T_{\widetilde{\varphi }_{\Phi }}$.
\end{proof}


\begin{remark}
We note that in the above theorem, the contractivity assumption is not
necessary. In fact, the above theorem remains valid for all completely
positive linear map $\varphi:A\rightarrow L(\mathcal{H})$.
\end{remark}

Consequently, the completely positive linear maps on the linking $C^{\ast }$%
-algebra $\mathcal{L}(X)$ which are induced by $\varphi $-maps have the same
multiplicative domain and ternary domain.

\begin{corollary}
Let $\varphi :A\rightarrow L(\mathcal{H})$ be a completely positive linear
map and $\Phi $ and $\Psi $ be two operator-valued $\varphi $-maps on a
Hilbert $A$-module $X$. Then $M_{\widetilde{\varphi }_{\Phi }}=M_{\widetilde{%
\varphi }_{\Psi }}$, and $T_{\widetilde{\varphi }_{\Phi }}=T_{\widetilde{%
\varphi }_{\Psi }}$.
\end{corollary}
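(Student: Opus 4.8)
The plan is to read off both the multiplicative and ternary domains from the explicit formulas furnished by Theorem \ref{thmlink} and then to observe that these formulas contain no reference whatsoever to the particular $\varphi $-map used to build the induced map on $\mathcal{L}(X)$. First I would apply Theorem \ref{thmlink} to $\Phi $, which gives
\begin{equation*}
M_{\widetilde{\varphi }_{\Phi }}=\left\{
\begin{bmatrix}
T & y \\
x^{\ast } & a
\end{bmatrix}
:T\in K(X),\,x,y\in X_{\varphi },\,a\in M_{\varphi }\right\},
\end{equation*}
together with the analogous description of $T_{\widetilde{\varphi }_{\Phi }}$ in which $M_{\varphi }$ is replaced by $T_{\varphi }$. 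Repeating the argument verbatim for $\Psi $ yields the same two descriptions, now carrying the label $\Psi $.

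The heart of the matter is that the four ingredients $K(X)$, $X_{\varphi }$, $M_{\varphi }$ and $T_{\varphi }$ appearing on the right-hand sides are all intrinsic to the pair $(\varphi ,X)$ and are completely insensitive to the choice of $\varphi $-map. Indeed, $K(X)$ depends only on $X$; the multiplicative domain $M_{\varphi }$ and the ternary domain $T_{\varphi }$ of $\varphi $ depend only on $\varphi $; and the ternary domain $X_{\varphi }$ of $\varphi $ on $X$ is, by Theorem \ref{theorem37} and Definition \ref{ternary}, the common $\varphi $-module domain of every $\varphi $-map, hence also independent of $\Phi $ and $\Psi $. Since $M_{\widetilde{\varphi }_{\Phi }}$ and $M_{\widetilde{\varphi }_{\Psi }}$ are both equal to the identical set built from these ingredients, they coincide; the same reasoning with $T_{\varphi }$ in place of $M_{\varphi }$ gives $T_{\widetilde{\varphi }_{\Phi }}=T_{\widetilde{\varphi }_{\Psi }}$.

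The one technical point I would flag is that Theorem \ref{thmlink} is stated under the hypothesis that $\varphi $ is contractive, whereas the corollary assumes no such thing; here I would simply appeal to the remark immediately following Theorem \ref{thmlink}, which records that the contractivity hypothesis is superfluous and that the conclusion persists for every completely positive linear map. Granting that, there is no genuine obstacle: all of the substantive work has already been carried out in establishing the explicit form of the multiplicative and ternary domains in Theorem \ref{thmlink}, and the present corollary is a direct bookkeeping consequence of the fact that those formulas are manifestly independent of the $\varphi $-map.
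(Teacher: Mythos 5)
Your proposal is correct and matches the paper's intent exactly: the corollary is stated as an immediate consequence of Theorem \ref{thmlink}, whose descriptions of $M_{\widetilde{\varphi}_{\Phi}}$ and $T_{\widetilde{\varphi}_{\Phi}}$ involve only $K(X)$, $X_{\varphi}$, $M_{\varphi}$ and $T_{\varphi}$, none of which depend on the choice of $\varphi$-map. Your handling of the contractivity hypothesis via the remark following that theorem is also the intended reading.
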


\subsection*{Acknowledgment}

The research of the first author was in part supported by a grant from IPM
(No. 96470123).

%
%

\bigskip

\bigskip

\bigskip

\bigskip


\begin{thebibliography}{99}
\bibitem{Ar} L. Aramba\v{s}i\'{c}, \textit{Irreducible representations of
Hilbert $C^{\ast }$-modules}, Math. Proc. R. Ir. Acad., \textbf{105} (2005),
no. 2, 11-24.

\bibitem{Asadi} M. B. Asadi, \textit{\ Stinespring's theorem for Hilbert $%
C^* $-modules}, J. Operator Theory, \textbf{62} (2008), no. 2, 235-238.

\bibitem{ABN1} M. B. Asadi, R. Behmani, A. R. Medghalchi, H. Nikpey, \textit{%
Completely semi-$\varphi $-maps}, Complex Anal. Oper. Theory
https://doi.org/10.1007/s11785-017-0744-7.

\bibitem{ABN2} M. B. Asadi, R. Behmani, A. R. Medghalchi, H. Nikpey, \textit{%
Operator-valued maps on Hilbert $C^{\ast }$-modules}, Operator and Matrices, 
\textbf{11} (2017), no. 3, 769-776.

\bibitem{BRS} B. V. R. Bhat, G. Ramesh and K. Sumesh, \textit{Stinespring's
theorem for maps on Hilbert $C^{\ast }$-modules}, J. Operator Theory, 
\textbf{68} (2012), no.1, 173-178.

\bibitem{G} A. Gheondea, \textit{On Propagation of Fixed Points of Quantum
Operations and Beyond,} arXiv:1611.04742

\bibitem{J2011} M. Joita, \textit{Covariant version of the Stinespring type
theorem for Hilbert }$C^{\ast }$\textit{-modules}, Cent. Eur. J. Math., 
\textbf{9} (2011), 803-813.

\bibitem{J2012} M. Joi\c{t}a, \textit{Comparision of completely positive
maps on Hilbert }$C^{\ast }$\textit{-modules}, J. Math. Anal. Appl., \textbf{%
303} (2012), 644-650.

\bibitem{La} E. C. Lance, \textit{Hilbert }$C^{\ast }$\textit{-modules-} a
toolkit for operator algebraists, London Mathematical Society Lecture Note
Series, vol. 210, Cambridge University Press, Cambridge, 1995,

\bibitem{M} G. J. Murphy, $C^{\ast }$\textit{-algebras and operator theory},
Academic Press, 1990.

\bibitem{P} V. Paulsen, \textit{Completely bounded maps and operator algebras%
}, Cambridge University Press, 2002.

\bibitem{SS} M. Skeide, K. Sumesh, \textit{CP-H-extendable maps between
Hilbert modules and CPH-semigroups}, J. Math. Anal. Appl., \textbf{414}
(2014), 886-913.

\bibitem{St} W. Stinespring, \textit{Positive functions on }$C^{\ast }$%
\textit{-algebras}, Proc. Amer. Math. Soc., \textbf{6} (1955), 211-216.
\end{thebibliography}
\end{document}